\definecolor{darkblue}{rgb}{0,0,0.7} 
\newcommand{\darkblue}{\color{darkblue}} 
\newcommand{\defn}[1]{\emph{\darkblue #1}} 
\newcommand{\blue}[1]{{\color{blue} #1}} 
\newcommand{\red}[1]{\underline{\bf \color{red} #1}} 
\newtheorem{theorem}{Theorem}[section]
\newtheorem{proposition}[theorem]{Proposition}
\newtheorem{corollary}[theorem]{Corollary}
\newtheorem{lemma}[theorem]{Lemma}
\theoremstyle{definition}
\newtheorem{definition}[theorem]{Definition}
\newtheorem{example}[theorem]{Example}
\newtheorem{remark}[theorem]{Remark}
\newcommand{\ssm}{\smallsetminus} 
\newcommand{\YY}{{\mathcal Y}}
\newcommand{\A}{{\mathcal A}}
\newcommand{\C}{{\mathcal C}}
\newcommand{\R}{{\mathbb R}}
\newcommand{\subwordComplex}[2]{\mathcal{SC}(#1,#2)} 
\newcommand{\wordprod}[1]{\sigma_{#1}} 
\newcommand{\Root}[2]{\mathsf{r}(#1,#2)} 
\newcommand{\RootF}[2]{\mathsf{r}_F(#1,#2)} 
\newcommand{\Roots}[1]{\mathsf{R}(#1)} 
\newcommand{\wo}{w_\circ} 
\newcommand{\shifted}[1]{{#1}}
\newcommand{\rotated}[1]{#1^{\circlearrowleft}}
\newcommand{\woconj}[1]{#1^{\wo}}
\newcommand{\seed}{A} 
\newcommand{\cluster}{T} 
\definecolor{Gray}{gray}{0.9} 
\newcolumntype{g}{>{\columncolor{Gray}}c}
\newcolumntype{x}{@{\hspace{.1cm}}c@{\hspace{.1cm}}}
\def\dim{\operatorname{dim}}
\def\inv{\operatorname{inv}}
\newcommand*\circled[1]{\tikz[baseline=(char.base)]{
  \node[shape=circle,draw,inner sep=1pt] (char) {$#1$};}}
\title{A Hopf algebra of subword complexes}
\author[Bergeron, Ceballos]{Nantel Bergeron$^{1,2}$, Cesar Ceballos$^{1,2}$}
\address[1]{Fields Institute\\ Toronto, ON, Canada}
\address[2]{York University\\ Toronto, ON, Canada}
\email{bergeron@yorku.ca}
\email{ceballos@mathstat.yorku.ca}
\thanks{
The first author was partially  supported by NSERC.\\
The second author was supported by the government of Canada through a Banting Postdoctoral Fellowship. He was also supported by a York University research grant.
}
\date{\today}
\begin{document}
\maketitle

\begin{abstract}
We introduce a Hopf algebra structure of subword complexes, including both finite and infinite types. 
We present an explicit cancellation free formula for the antipode using acyclic orientations of certain graphs, and show that this Hopf algebra induces a natural non-trivial sub-Hopf algebra on $c$-clusters in the theory of cluster algebras. 
%
\end{abstract}

%
%
%
%

\section{Introduction}
\bigskip

Subword complexes are simplicial complexes introduced by Knutson and Miller, and are motivated by the study of Gr\"obner geometry of Schubert varieties~\cite{KnutsonMiller-subwordComplex,knutson_grobner_2005}.  
These complexes have been shown to have striking connections with diverse areas such as associahedra~\cite{Stasheff, Stasheff-operads, TamariFestschrift}, multi-associahedra~\cite{jonsson_generalized_2005,soll_type-b_2009}, pseudotriangulation polytopes~\cite{RoteSantosStreinu-polytopePseudotriangulations,RoteSantosStreinu-survey}, and cluster algebras~\cite{FominZelevinsky-ClusterAlgebrasI,FominZelevinsky-ClusterAlgebrasII}.

The first connection between subword complexes and associahedra was discovered by Pilaud and Pocchiola who showed that every multi-associahedron can be obtained as a well chosen type~$A$ subword complex in the context of sorting networks~\cite{PilaudPocchiola}. A particular instance of their result was rediscovered using the subword complex terminology in~\cite{Stump,serrano_maximal_2012}. These results were generalized to arbitrary finite Coxeter groups by Ceballos, Labb\'e and Stump in~\cite{ceballos_subword_2013}.  
The results in~\cite{ceballos_subword_2013} provide an additional connection with the $c$-cluster complexes in the theory of cluster algebras, which has been used as a keystone for decisive results about denominator vectors in cluster algebras of finite type~\cite{CeballosPilaud}.
A construction of certain brick polytopes of spherical subword complexes is presented in~\cite{PilaudSantos-brickPolytope,PilaudStump-brickPolytopes}, which is used to give a precise description of the toric varieties of $c$-generalized associahedra in connection with Bott-Samelson varieties in~\cite{escobar_brick_2014}.
More recent developments on geometric and combinatorial properties of subword complexes are presented in~\cite{ceballos_fan_2014,escobar_subword_2015,stump_cataland_2015}.

This paper presents a more algebraic approach to subword complexes. We introduce a Hopf algebra structure on the vector space generated by all facets of irreducible subword complexes, including both finite and infinite types. Such facets include combinatorial objects such as triangulations and multi-triangulations of convex polygons, pseudotriangulations of any planar point set in general position, and $c$-clusters in cluster algebras of finite type.  
We present an explicit cancellation free formula for the antipode using acyclic orientations of certain graphs. It is striking to observe that we to obtain a result very similar to the antipode formula of Humpert and Martin for the incidence Hopf algebra of graphs~\cite{HumpertMartin}. As in~\cite{BenedettiSagan},  our combinatorial Hopf algebra
is part of a nice family  with explicit cancelation free formula for the antipode.
The Hopf algebra of subword complexes also induces a natural sub-Hopf algebra on $c$-clusters of finite type.  
Cluster complexes for Weyl groups were introduced by Fomin and Zelevinsky in connection with their proof of Zamolodchikov's periodicity conjecture for algebraic $Y$-systems in~\cite{FominZelevinsky-YSystems}. These complexes encode the combinatorial structure behind the associated cluster algebra of finite type~\cite{FominZelevinsky-ClusterAlgebrasII}, and are further extended to arbitrary Coxeter groups by Reading in~\cite{Reading-coxeterSortable}. The resulting~$c$-cluster complexes use a Coxeter element~$c$ as a parameter and have been extensibly used to produce geometric constructions of generalized associahedra~\cite{ReadingSpeyer,HohlwegLangeThomas,stella_polyhedral_2011,PilaudStump-brickPolytopes}.  
The basis elements of our Hopf algebra of $c$-clusters are given by pairs of clusters $(\seed,\cluster)$ of finite type,
where $\seed$ is any acyclic cluster seed and $\cluster$ is any cluster obtained from it by mutations. The multiplication and comultiplication operations are natural from the cluster algebra perspective on $\cluster$. However, subword complexes allow us to nontrivially extend these operations to remarkable operations on the acyclic seed  $\seed$.  

The initial motivation of this paper was to extend the Loday-Ronco Hopf algebra on planar binary trees~\cite{loday_hopf_1998} in the context of subword complexes, and to present an algebraic approach to subword complexes that helps to better understand their geometry. Although we can explicitly describe the Loday-Ronco Hopf algebra from the subword complex approach, the Hopf algebra described in this paper differs from our original intent for several reasons: it allows an extension to arbitrary Coxeter groups, it restricts well to the context of $c$-clusters, and contains more geometric information about subword complexes. Our description of the Loday-Ronco Hopf algebra in terms of certain subword complexes of type $A$ will be presented in a forthcoming paper in joint work with Pilaud~\cite{ceballosp1}.  The geometric intuition behind the Hopf algebra of subword complexes presented in this paper was indirectly used to produce the geometric realizations of type~$A$ subword complexes and multi-associahedra of rank~3 in~\cite{ceballos_fan_2014}. 

The outline of the paper is as follows. In Section~\ref{sec:subwordcomplexes} we present the concept of subword complexes, some examples and a decomposition theorem needed for the Hopf algebra structure. In Section~\ref{sec:Hopf} we give the Hopf structure, and compute explicitly a cancelation free formula for the antipode in Section~\ref{sec:antipode}. In Section~\ref{sec:clusters} we show that this Hopf algebra induces a sub-Hopf algebra on $c$-clusters of finite type and present a combinatorial model description for Cartesian products of classical types. We also have two small appendices. In Appendix~\ref{Appendix:inversions}, we geometrically study  the sequence of inversions of a word (not necessarily reduced) in the generators of a Coxeter group. This will be useful for our decomposition theorem of subword complexes in Section~\ref{sec:subwordcomplexes}. In Appendix~\ref{Appendix:gems} we give an interpretation of the top-to-random shuffle operator on our Hopf algebra. This gives an example of a rock breaking process as in \cite{DiaconisPangRam, Pang2014} that may have more than one different stable outcome.

\medskip
\noindent
{\bf Acknowledgements:}
The proof in Section~\ref{sec:cancel free} is based on discussions with Carolina Benedetti and Bruce Sagan.  The involution we introduce is very close to the one presented in~\cite{BenedettiSagan}.
We are especially grateful to Nathan Reading for his help with the proof of Lemma~\ref{lem:parabolic_sorting}, and to Christophe Hohlweg for his help with the generalization of our Hopf algebra to infinite Coxeter groups. We are grateful to Vincent Pilaud, Salvatore Stella and Jean-Philippe Labb\'e for helpful discussions. We also thank the Banting Postdoctoral Fellowships program of the government of Canada and York University for their support on this project.

%
%
%
%

\section{Subword Complexes}\label{sec:subwordcomplexes}
Let~$W$ be a possibly infinite Coxeter group with generators~$S=\{s_1,\dots , s_n\}$. This group acts on a real vector space~$V$, we denote by~$\Delta:=\{\alpha_s \ |\  s\in S\}$ the set of simple roots of a root system~$\Phi=\Phi^+ \sqcup  \Phi^-$ associated to~$W$. Throughout the paper, for simplicity,  we think of $W$ as the tuple $(W,S,\Phi^+)$ containing the information of the group, its generators and the decomposition of its root system $\Phi=\Phi^+ \sqcup  -(\Phi^+)$.

\begin{definition}[\cite{KnutsonMiller-subwordComplex}]
Let~$Q=(q_1,\dots,q_r)$ be a word in $S$ and $\pi\in W$ be an element of the group. The \defn{subword complex}~$\subwordComplex{Q}{\pi}$ is a simplicial complex whose faces are given by subsets $I\subset [r]=\{1,2,\ldots,r\}$, such that the subword of~$Q$ with positions at~$[r]\ssm I$ contains a reduced expression of~$\pi$.  
\end{definition}

\begin{example}
\label{ex:A2}
%
%
%
%
%
%
%

Let $W=\mathcal S_3$ be the symmetric group generated by the simple transpositions $S=\{s_1,s_2\}=\{(1\ 2), (2\ 3)\}$. Let $Q=(q_1,q_2,q_3,q_4,q_5)=(s_1,s_2,s_1,s_2,s_1)$ and $\pi=s_1s_2$. 
Since the reduced expressions of $\pi$ in $Q$ are given by $q_1q_2=q_1q_4=q_3q_4=\pi$, 
the maximal faces of~$\subwordComplex{Q}{\pi}$ are $\{3,4,5\},\{2,3,5\}$ and $\{1,2,5\}$. This subword complex is illustrated in Figure~\ref{fig:A2}, where we use the network diagrams used by Pilaud and Pocchiola in~\cite{PilaudPocchiola}. Such diagrams will be used through out the paper to represent subword complexes of type $A$. The letters in the word $Q$ are consecutively placed form left to right as vertical commutators in the diagram such that a generator $s_i$ connects the horizontal levels $i$ and $i+1$ numerated from bottom to top. Figure~\ref{fig:A2} also illustrates the three possible facets in the network diagram.  

\begin{figure}[htbp]
\begin{center}
\includegraphics[width=0.9\textwidth]{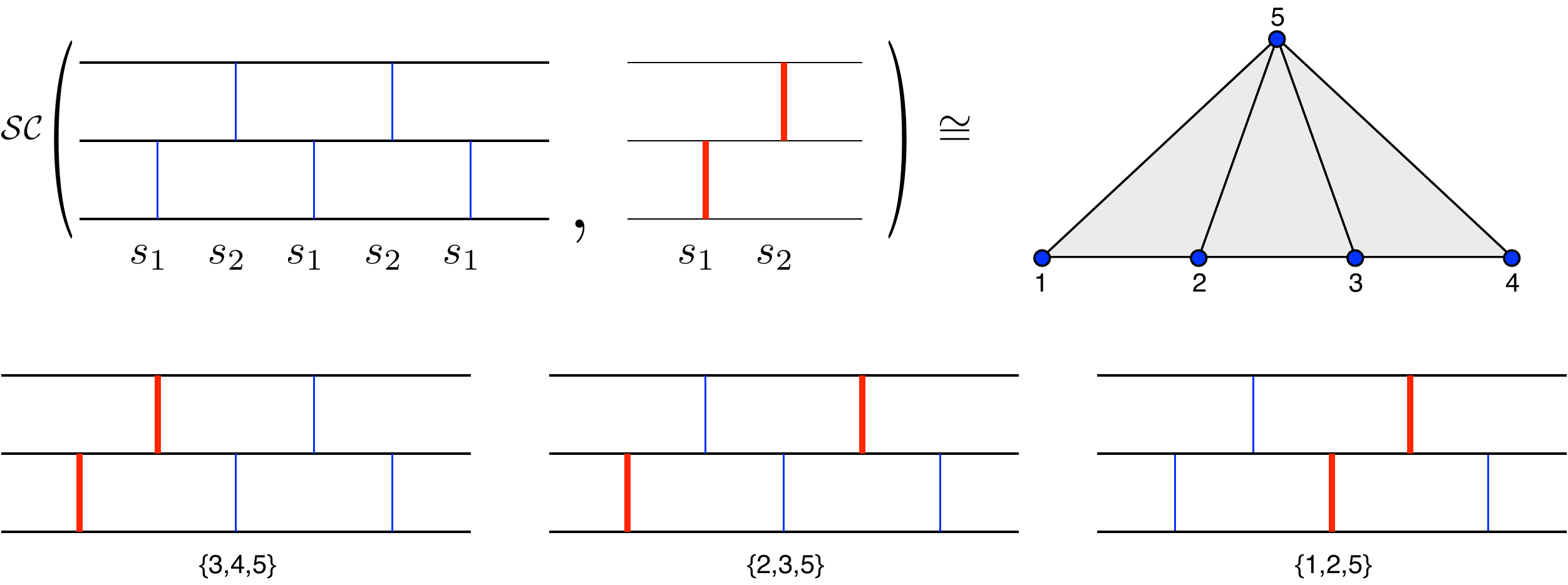}
\caption{Subword complex~$\subwordComplex{Q}{\pi}$ for $Q=(s_1,s_2,s_1,s_2,s_1)$ and $\pi=s_1s_2$ in type $A_2$ (top). Its maximal faces are $\{3,4,5\},\{2,3,5\}$ and $\{1,2,5\}$ (bottom).}
\label{fig:A2}
\end{center}
\end{figure}

\end{example}

Two remarkable examples of subword complexes are the dual associahedron and the multi-associahedron. The first description of these two complexes as well chosen subword complexes was given by Pilaud and Pocchiola in the context of sorting networks in~\cite[Section~3.3 and Theorem~23]{PilaudPocchiola}. A particular case of their result was rediscovered by Stump~\cite{Stump} and Stump and Serrano~\cite{serrano_maximal_2012}, who explicitly used the terminology of subword complexes in type $A$. We refer to~\cite[Section~2.4]{ceballos_subword_2013} for a precise description of these two complexes in the generality of~\cite{PilaudPocchiola} and a generalization of their results to arbitrary finite Coxeter groups. 

\begin{example}[Associahedron]

Let $W=\mathcal S_4$ be the symmetric group generated by the simple transpositions $S=\{s_1,s_2,s_3\}=\{(1\ 2), (2\ 3), (3\ 4)\}$. Let $Q=(s_1,s_2,s_3,s_1,s_2,s_3,s_1,s_2,s_1)$ and $\pi=[4\ 3\ 2\ 1]$ be the longest element of the group. The subword complex~$\subwordComplex{Q}{\pi}$ is isomorphic to the boundary complex of the dual of the 3-dimensional associahedron. The vertices of this complex correspond to diagonals of a convex $6$-gon and the facets to its triangulations.
Figure~\ref{fig:A3k1} illustrates the facet at positions $\{3,4,5\}$ and its corresponding triangulation of the polygon. The bijection sends the $i$th letter in $Q$ to the $i$th diagonal of the polygon in lexicographic order. A set of positions in $Q$ forms a facet of the subword complex if and only if the corresponding diagonals form a triangulation of the polygon. 
Figure~\ref{fig:c-clusters_classical_typeA} illustrates an example of a more general version of this bijection, which is explained in Section~\ref{sec:c-clusters}. 


\begin{figure}[htbp]
\begin{center}
\includegraphics[width=0.95\textwidth]{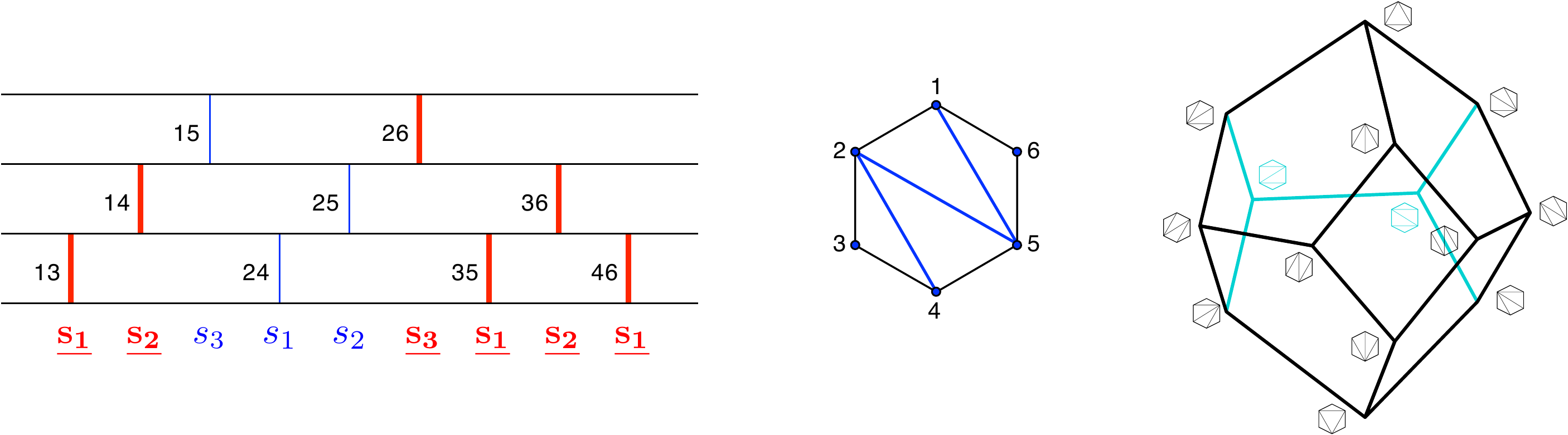}
\caption{Bijection between the facets of the illustrated subword complex and triangulations of a convex $6$-gon. The letters in the word correspond to the diagonals of the polygon ordered in lexicographic order. The subword complex is isomorphic to the dual of the 3-dimensional associahedron illustrated on the right.}
\label{fig:A3k1}
\end{center}
\end{figure}

\end{example}

\begin{example}[Multi-associahedron]
\label{ex:A3k2}

Let $W=\mathcal S_4$ and $S=\{s_1,s_2,s_3\}=\{(1\ 2), (2\ 3), (3\ 4)\}$ as above, $Q=(s_1,s_2,s_3,s_1,s_2,s_3,s_1,s_2,s_3,s_1,s_2,s_1)$ and $\pi=[4\ 3\ 2\ 1]$. 
The subword complex~$\subwordComplex{Q}{\pi}$ is isomorphic to the (simplicial) multi-associahedron $\Delta_{8,2}$. The vertices of this complex are the 2-relevant diagonals of a convex 8-gon, that is diagonals that leave at least two vertices of the polygon on each of its sides. The faces are subsets of 2-relevant diagonals not containing a 3-crossing, that is 3 diagonals that mutually cross in their interiors.
The thick blue diagonals in the right part of Figure~\ref{fig:A3k2} form a maximal set of 2-relevant diagonals not containing a 3-crossing. The corresponding facet~$I=\{1,3,7,8,9,10\}$ of the subword complex is illustrated on the left. The bijection sends the $i$th letter in $Q$ to the $i$th 2-relevant diagonal of the polygon in lexicographic order. Note that the thin grey diagonals in the figure never appear in a 3-crossing and therefore are considered to be ``irrelevant". A maximal set of diagonals (relevant or not) of a polygon not containing a $(k+1)$-crossing is known in the literature as a $k$-triangulation.  
We refer to~\cite[Section~2]{ceballos_subword_2013} for a more details on this bijection in the generality of~\cite{PilaudPocchiola}.

\begin{figure}[htbp]
\begin{center}
\includegraphics[width=0.95\textwidth]{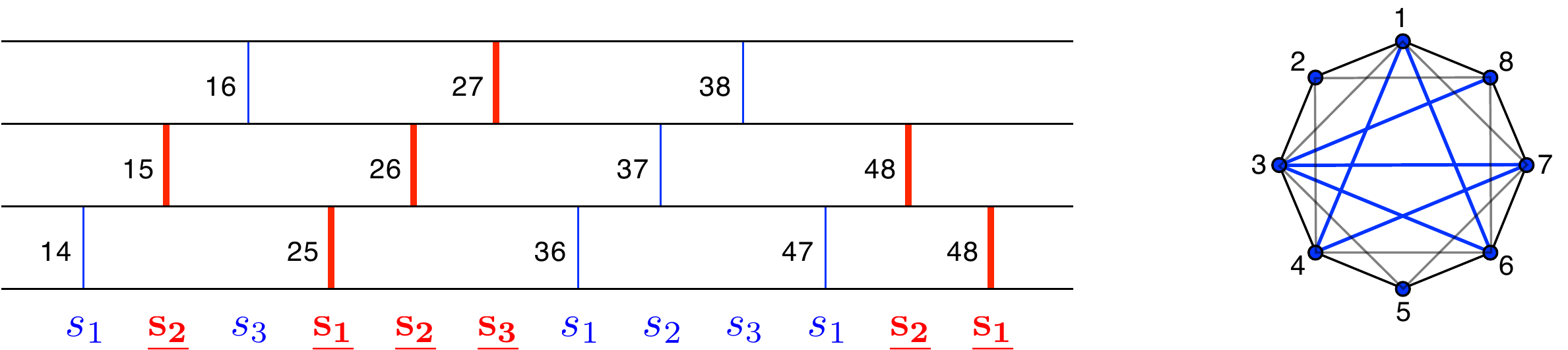}
\caption{Bijection between the facets of the illustrated subword complex and $2$-triangulations of a convex $8$-gon. The letters in the word correspond to the $2$-relevant diagonals of the polygon ordered in lexicographic order. The subword complex is isomorphic to the multi-associahedron whose facets correspond to $2$-triangulations of an $8$-gon. The thin grey diagonals are irrelevant because they appear in every $2$-triangulation.}
\label{fig:A3k2}
\end{center}
\end{figure}

\end{example}

The multi-associahedron is a rich combinatorial object that is conjectured to be realizable as the boundary complex of a convex polytope~\cite[Section 1.2]{jonsson_generalized_2005}. Inspired by our Hopf algebra of subword complexes, we discovered certain geometric constructions of a particular family of multi-associahedra~\cite{ceballos_fan_2014}. 
Another important family of examples in connection with cluster complexes in the theory of cluster algebras, and the corresponding induced Hopf algebra will be presented in Section~\ref{sec:clusters}.

\subsection{Root function and flats}
Associated to a subword complex, one can define a root function which plays a fundamental role in the theory. This function was introduced by Ceballos, Labb\'e and Stump in~\cite{ceballos_subword_2013}. It encodes exchanges in the facets of the subword complex~\cite{ceballos_subword_2013} and has been extensively used in the construction of Coxeter brick polytopes~\cite{PilaudStump-brickPolytopes} and in the description of denominator vectors in cluster algebras of finite type~\cite{CeballosPilaud}.

\begin{definition}[\cite{ceballos_subword_2013}]
\label{def:rootFunction}
The \defn{root function}
\[
\Root{I}{\cdot} : [r] \longrightarrow \Phi
\]
associated to a subset $I \subseteq [r]$ is defined by
\[
\Root{I}{j} := \wordprod{A_j}(\alpha_{q_j}),
\]
where~$A_j := [j-1]\ssm I$ is the set of positions on the left of $j$ that are in the complement of~$I$, and~$\wordprod{X} \in W$ denotes the product of the elements $q_x \in Q$ for $x \in X$ in the order they appear in~$Q$. 
The \defn{root configuration} of~$I$ is the list $\Roots{I} := (\Root{I}{i}:i \in I)$. 
We denote by~$\Root{I}{Q}$ the list of roots~$(\Root{I}{1},\dots , \Root{I}{r})$.
\end{definition}

All the information about the subword complex is encoded by its root function. In particular, the flips between facets can be described as follows. Lemma~\ref{lem:subword_flips} was stated for subword complexes of finite type in~\cite{ceballos_subword_2013}, but the proof works exactly the same for arbitrary Coxeter groups (finite or not).

\begin{lemma}[{\cite[Lemmas 3.3 and 3.6]{ceballos_subword_2013}}]
\label{lem:subword_flips}
Let $I$ and $I'$ be two adjacent facets of the subword complex~$\subwordComplex{Q}{\pi}$ with $I\ssm i = I' \ssm i'$.
\begin{enumerate}[(1)]
\item The position $i'$ is the unique position in the complement of $I$ such that $\Root{I}{i'}\in \{\pm \Root{I}{i}\}$. Moreover, $\Root{I}{i'}=\Root{I}{i}$ if $i<i'$, while $\Root{I}{i'}=-\Root{I}{i}$ if $i'<i$.
\item The map $\Root{I'}{\cdot}$ is obtained from the map $\Root{I}{\cdot}$ by
\[
\Root{I'}{k}=
\begin{cases}
s_{\Root{I}{i}}\Root{I}{k} & \text{if } \operatorname{min} \{i,j\} < k \leq \operatorname{max} \{i,j\} \\
\Root{I}{k} & \text{otherwise}
\end{cases}
\]
where $s_{\Root{I}{i}}\in W$ denotes the reflection that is orthogonal (or dual) to the root $\Root{I}{i}$.
\end{enumerate}

\end{lemma}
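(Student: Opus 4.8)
The plan is to read everything off the two reduced words hidden inside the enlarged subword at positions $[r]\ssm(I\cap I')$. Write the complement of $I$ in increasing order as $p_1<\dots<p_k$, so that $\pi=q_{p_1}\cdots q_{p_k}$ is reduced and, by the standard description of the inversion roots of a reduced word, the list $(\Root{I}{p_1},\dots,\Root{I}{p_k})$ consists of the $k$ \emph{distinct positive} inversion roots of $\pi$. Let $J=I\cap I'=I\ssm i=I'\ssm i'$; then $[r]\ssm J=([r]\ssm I)\cup\{i\}=([r]\ssm I')\cup\{i'\}$ is a word of length $\ell(\pi)+1$ containing $\pi$ in (at least) the two ways obtained by deleting position $i$ or position $i'$. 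Splitting the reduced word at the slot where $i$ is inserted, write $\pi=uv$ with $u=\wordprod{\{p_1,\dots,p_m\}}$ and $v=\wordprod{\{p_{m+1},\dots,p_k\}}$, so that $\Root{I}{i}=u(\alpha_{q_i})$.

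For part~(1) I would extract a conjugation relation. The hypothesis that deleting $i'$ also yields $\pi$ says that the two length-$\ell(\pi)$ subwords have the same product; cancelling the common prefix and suffix and using $q_x^{-1}=q_x$ collapses this, by a short telescoping computation, to $q_i=c\,q_{i'}\,c^{-1}$, where $c$ is the (reduced) segment of $q_{p_1}\cdots q_{p_k}$ strictly between the positions of $i$ and $i'$. Passing to roots gives $\alpha_{q_i}=\pm\,c(\alpha_{q_{i'}})$, hence $\Root{I}{i'}\in\{\pm\Root{I}{i}\}$. The sign is then pinned down by positivity: when $i<i'$ the word $c\,q_{i'}$ is a prefix of the reduced word $v$, so $c(\alpha_{q_{i'}})>0$ and the sign is $+$, giving $\Root{I}{i'}=\Root{I}{i}$; when $i'<i$ one checks that $u\,q_i=\wordprod{\{p_1,\dots,p_m\}\ssm i'}$ has length $\ell(u)-1$, so $\Root{I}{i}=u(\alpha_{q_i})<0$, which together with $\Root{I}{i'}>0$ forces the sign $-$ and hence $\Root{I}{i'}=-\Root{I}{i}$. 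Uniqueness of $i'$ is then immediate: the complement roots are positive and pairwise distinct, so exactly one of them equals the positive element of $\{\pm\Root{I}{i}\}$, and none equals its negative.

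For part~(2) (where the $j$ in the displayed range is the flip partner $i'$) I would compare the prefix products $\wordprod{[k-1]\ssm I}$ and $\wordprod{[k-1]\ssm I'}$ as $k$ ranges over $[r]$, since $\Root{I}{k}=\wordprod{[k-1]\ssm I}(\alpha_{q_k})$ and likewise for $I'$. For $k\le\min\{i,i'\}$ neither $i$ nor $i'$ lies in $[k-1]$, so the two prefix sets coincide and $\Root{I'}{k}=\Root{I}{k}$. For $\min\{i,i'\}<k\le\max\{i,i'\}$ exactly one of $i,i'$ lies in $[k-1]$, and the two prefix products differ precisely by the single inserted letter $q_i$ (resp. $q_{i'}$) placed at the slot governed by $u$; this contributes the conjugate $u\,q_i\,u^{-1}=s_{\Root{I}{i}}$ (using $s_{\Root{I}{i'}}=s_{\Root{I}{i}}$ from part~(1) to cover the $i'<i$ case), so $\wordprod{[k-1]\ssm I'}=s_{\Root{I}{i}}\,\wordprod{[k-1]\ssm I}$ and hence $\Root{I'}{k}=s_{\Root{I}{i}}\Root{I}{k}$. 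Finally, for $k>\max\{i,i'\}$ both $i,i'$ lie in $[k-1]$, and the relation $q_i\,c=c\,q_{i'}$ coming from part~(1) shows the two prefix products become equal again, returning $\Root{I'}{k}=\Root{I}{k}$.

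The main obstacle is the sign bookkeeping in part~(1): the clean statement masks the fact that the flip partner can sit on either side of $i$, and one must use the positivity of the inversion roots of a reduced word, not merely the conjugacy of $q_i$ and $q_{i'}$, to decide between $+$ and $-$. Once those signs and the conjugation relation are in hand, part~(2) is elementary prefix bookkeeping, the only care being to invoke the reflection-line identity $s_{-\beta}=s_{\beta}$ so that the single reflection $s_{\Root{I}{i}}$ governs both the $i<i'$ and $i'<i$ cases.
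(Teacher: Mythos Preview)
Your argument is correct and is essentially the standard proof of this result. Note, however, that the present paper does \emph{not} actually prove Lemma~\ref{lem:subword_flips}: it is quoted from \cite[Lemmas~3.3 and~3.6]{ceballos_subword_2013}, with only the remark that ``the proof works exactly the same for arbitrary Coxeter groups (finite or not).'' Your write-up is precisely the kind of reduced-word/inversion-sequence argument that underlies that original proof, so there is nothing genuinely different to compare.

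One small point of hygiene: the conjugation identity you write as $q_i=c\,q_{i'}\,c^{-1}$ is literally correct only in the case $i<i'$; when $i'<i$ the cancellation yields $q_i=c^{-1}q_{i'}c$ instead. This does not affect the conclusion $\Root{I}{i'}\in\{\pm\Root{I}{i}\}$, and since you treat the sign case-by-case anyway the argument goes through, but it would read more cleanly to either state the conjugacy symmetrically (``$q_i$ and $q_{i'}$ are conjugate by the intervening segment'') or to split the two cases from the outset. Likewise, in part~(2) for $k>\max\{i,i'\}$, the relation you invoke is $q_i\,c=c\,q_{i'}$ when $i<i'$ and $q_{i'}\,c=c\,q_i$ when $i'<i$; both give equality of the two prefix products, as you claim.
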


\begin{example}[Example~\ref{ex:A3k2} continued]
Let $\{\alpha_1,\dots ,\alpha_n\}$ be the simple roots of the root system of type~$A_n$. The positive roots can be written as positive linear combinations 
$
\alpha_{i\dots j} = \sum_{\ell =i}^j \alpha_\ell,
$
for $1\leq i \leq j \leq n$, and the negative roots are the roots $-\alpha_{i\dots j}$.
The group acts on the roots according to the following rule which is extended by linearity, 
\[
s_i(\alpha_j) =
\begin{cases}
  -\alpha_j &\text{ if } i=j,\\
  \alpha_i+\alpha_j &\text{ if } |i-j|=1,\\
  \alpha_j & \text{otherwise.}
\end{cases}
\]
The root function of the subword complex in Example~\ref{ex:A3k2} with respect to the facet~$I=\{1,3,7,8,9,10\}$ is illustrated in Figure~\ref{fig:A3k2_roots}. It associates a root to each of the letters in the word, the root $\alpha_{i\dots j}$ would be represented in the diagram by the indices $i\dots j$ for simplicity. For example, the indices 23 represent the root $\alpha_{23}=\alpha_2+\alpha_3$. In order to distinguish these indices with the ones used in Figures~\ref{fig:A3k1} and~\ref{fig:A3k2}, indices corresponding to diagonals of a polygon are placed on the left of each commutator, while indices corresponding to roots are placed on the right throughout the paper. The root associated to a letter $s_j$ in~$Q$ can be thought as the underlined red word on the left of that letter applied to $\alpha_j$.

\begin{figure}[htbp]
\begin{center}
\includegraphics[width=0.6\textwidth]{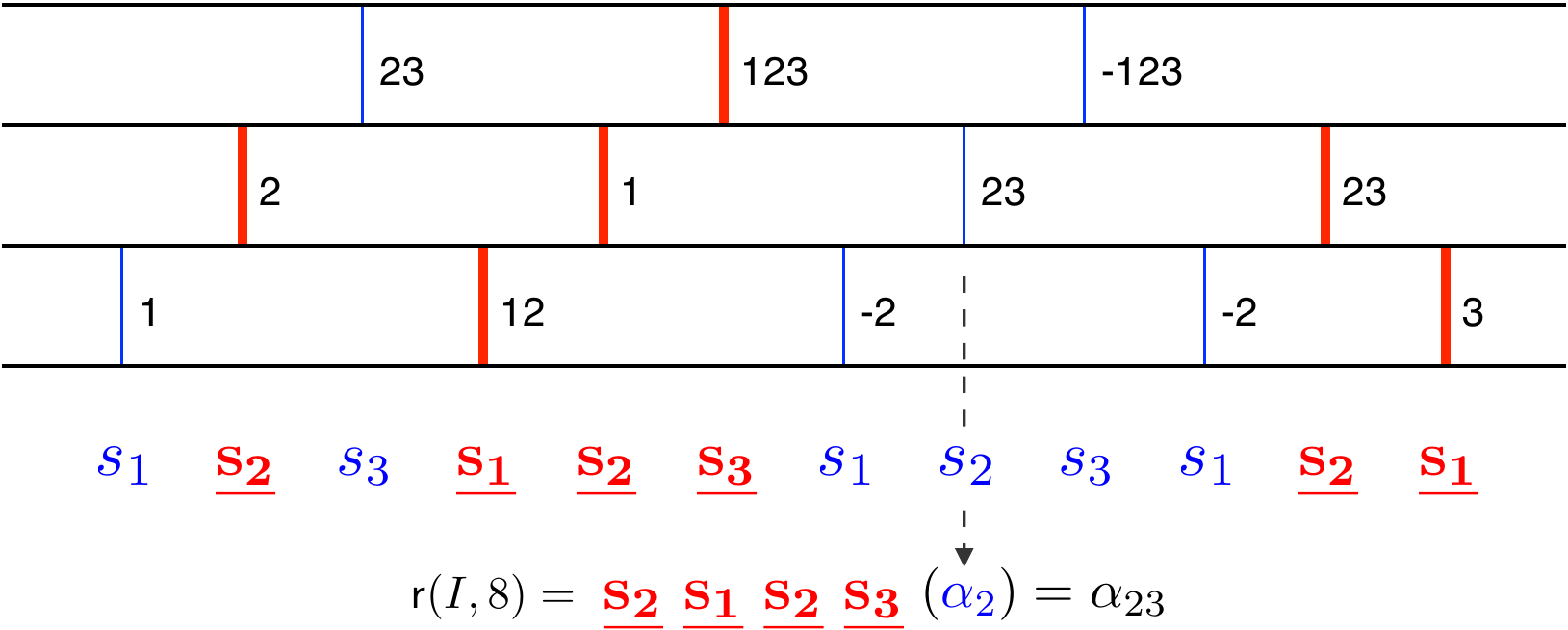}
\caption{Root function of the subword complex in Example~\ref{ex:A3k2} for the facet $I=\{1,3,7,8,9,10\}$.}
\label{fig:A3k2_roots}
\end{center}
\end{figure}

Note that exchanges in facets can be easily performed knowing the root function. For example, any of the two thin blue commutators labeled 23 can be flipped to the unique bold red commutator 23 to form a new facet. In contrast, any 2-relevant blue diagonal in the 2-triangulation in Figure~\ref{fig:A3k2} can be flipped to a unique diagonal to form a new 2-triangulation. However such flips are much easier to visualize in the subword complex. We refer to~\cite{pilaud_multitriangulations_2009} for a description of these flips using star polygons directly in the $k$-triangulations. 

\end{example}

\begin{definition}
A subword complex~$\subwordComplex{Q}{\pi}$ is said to be \defn{irreducible} if and only if the root configuration~$\Roots{I}$ generates the vector space~$V$ for some facet~$I$. Or equivalently, if the root configuration~$\Roots{I}$ generates the vector space~$V$ for any facet~$I$ (these two conditions are equivalent by Lemma~\ref{lem:subword_flips}(2) and the fact that any two facets are connected by a sequence of flips). A non irreducible subword complex is called \defn{reducible}.
\end{definition}

We will see below that every reducible subword complex is isomorphic to a subword complex of smaller rank (Corollary~\ref{cor:reducible}). This explains our choice of terminology. Before proving this, we need a notion of flats of a list of vectors in a vector space.

\begin{definition}
Let~$L=(v_1,\dots ,v_r)$ be a list of vectors (with possible repetitions) spanning a vector space~$V$. A \defn{flat}~$F$ of~$L$ is any sublist $F\subset L$ that can be obtained as the intersection $F=U\cap L$ for some subspace~$U\subset V$. 
\end{definition}

The flats of~$\Root{I}{Q}$ will be used to define the comultiplication of the Hopf algebra structure on subword complexes. The main ingredient in the definition is that every flat encodes the root function of a subword complex of smaller rank, which turns out to be isomorphic to the link of a face of the initial subword complex. This result, which we call the ``Decomposition theorem of subword complexes", has its origins in~\cite{ceballos_subword_2013} and was presented for finite types in a slightly weaker version in~\cite{PilaudStump-brickPolytopes}, see Remark~\ref{rem:decomposition_theorem}.

\subsection{Decomposition theorem of subword complexes}

Given a flat~$F$ of~$\Root{I}{Q}$ denote by $V_F\subset V$ the subspace of $V$ spanned by the roots in~$F$. This subspace contains a natural root system
\[
\Phi_F=\Phi_F^+\sqcup  \Phi_F^-
\] 
where $\Phi_F,\Phi_F^+,\Phi_F^-$ are the restrictions of $\Phi,\Phi^+,\Phi^-$ to $V_F$ respectively. We denote by $\Delta_F$ the corresponding set of simple roots and by $W_F$ the associated Coxeter group. In the case of infinite Coxeter groups, the fact that the root system intersected with a subspace is again a root system with simple roots contained in $\Phi^+$ is a non-trivial result by Dyer in~\cite{dyer_reflection_1990}. 
For convenience, denote by \[J_F=\{j_1,\dots , j_{r'} \} \subset [r]\] the set of positions in $Q$ whose corresponding roots~$\Root{I}{j_k}$ belong to~$F$. Define~$\beta_F=(\beta_1, \dots ,\beta_{r'})$ as the list of roots 
\[
\beta_k := \wordprod{B_{j_k}}(\alpha_{q_{j_k}}),
\]
where $B_{j} := ([j-1]\ssm I) \ssm J_F$ is the set of positions on the left of $j$ in the complement of $I$ whose corresponding roots are not in $F$.

\begin{lemma}\label{lem:simple_roots}
The roots $\beta_1,\dots ,\beta_{r'}$ are simple roots of the root system $\Phi_F$.
\end{lemma}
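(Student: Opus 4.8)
The plan is to show that the list $\beta_F=(\beta_1,\dots,\beta_{r'})$ is obtained from the root configuration $\Roots{I}$ by a carefully chosen element of $W$, and that this transformed configuration consists precisely of the simple roots of $\Phi_F$. The key conceptual point is that $\beta_k$ is computed by reading the word $Q$ but \emph{skipping} the positions in $J_F$ (the positions whose roots lie in $F$), whereas $\Root{I}{j_k}$ reads all positions in the complement of $I$ up to $j_k$. First I would make this difference precise: for each $k$, write $A_{j_k}=[j_k-1]\ssm I$ and $B_{j_k}=A_{j_k}\ssm J_F$, so that $A_{j_k}$ is obtained from $B_{j_k}$ by reinserting the positions of $J_F$ that occur to the left of $j_k$, namely $j_1,\dots,j_{k-1}$. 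The crucial observation is that each reinserted letter $q_{j_\ell}$ (for $\ell<k$) acts by the reflection $s_{\beta_\ell}$, since the root it contributes at that moment is exactly $\beta_\ell$ by the recursive structure of the definition.

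Concretely, I would prove by induction on $k$ the identity
\[
\Root{I}{j_k} = \wordprod{A_{j_k}}(\alpha_{q_{j_k}}) = \left(s_{\beta_1}s_{\beta_2}\cdots s_{\beta_{k-1}}\right)(\beta_k),
\]
relating the roots of $F$ to the putative simple roots $\beta_k$. This follows by factoring $\wordprod{A_{j_k}}$ as an alternating product of the ``skipped'' letters $q_{j_\ell}$ and the blocks of letters counted in $B_{j_k}$, then using $\wordprod{X}(\alpha_{q_j}) = (\wordprod{X} s_{q_j} \wordprod{X}^{-1}) \cdot \wordprod{X}(\text{something})$ type conjugation identities to migrate each reflection $s_{q_{j_\ell}}$ past the relevant block. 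Since the roots $\Root{I}{j_k}$ for $k=1,\dots,r'$ all lie in $\Phi_F$ and span $V_F$, and since the $\beta_k$ are their images under the orthogonal transformation $s_{\beta_1}\cdots s_{\beta_{k-1}}$ (an element of $W_F$, as each $\beta_\ell\in V_F$), the $\beta_k$ themselves lie in $\Phi_F$.

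It then remains to argue that these particular roots $\beta_1,\dots,\beta_{r'}$ are a \emph{simple} system for $\Phi_F$, not merely roots spanning $V_F$. Here I would use the positivity structure: I expect each $\beta_k$ to be a \emph{positive} root of $\Phi_F$ whenever the corresponding flip-data forces it, and more importantly that the $\beta_k$ are obtained one at a time so that $\beta_k$ is a simple root relative to the roots already produced, mirroring the standard fact that reading a reduced-word prefix produces simple roots at each new reflection. Concretely, the $\beta_k$ play the role of the root function of the smaller-rank subword complex $\subwordComplex{Q_F}{\pi_F}$ attached to the flat, so $(\beta_1,\dots,\beta_{r'})$ is the root configuration of an empty-like face in $W_F$, and for such a configuration the roots are manifestly the simple roots of $\Phi_F$. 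I anticipate the main obstacle to be the infinite-type case: one cannot simply invoke finiteness of the root system, and instead must lean on Dyer's theorem \cite{dyer_reflection_1990} guaranteeing that $\Phi_F$ is itself a root system with a well-defined simple system $\Delta_F\subset\Phi^+$. Matching the inductively constructed $\beta_k$ with this canonical $\Delta_F$ — rather than some other simple system of $\Phi_F$ — is the delicate step, and I would handle it by checking that each $\beta_k$ is indecomposable within $\Phi_F^+$, using Lemma~\ref{lem:subword_flips}(2) to control how the root function transforms and to verify the indecomposability inductively.
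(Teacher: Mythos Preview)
Your first step — establishing the relation between $\Root{I}{j_k}$ and $\beta_k$ via a product of reflections in $W_F$, thereby concluding $\beta_k\in\Phi_F$ — is essentially the same as what the paper does (Equation~\eqref{for:prod_Aj_Bj} in the paper). A minor correction: the positions you ``reinsert'' are not all of $j_1,\dots,j_{k-1}$ but only those not in $I$, since $A_{j_k}=[j_k-1]\ssm I$ already omits positions of $I$; consequently the product of reflections involves only those $\ell<k$ with $j_\ell\notin I$, and it is more natural to write the reflections as $s_{\Root{I}{j_\ell}}$ rather than $s_{\beta_\ell}$.

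The genuine gap is the step from ``$\beta_k\in\Phi_F$'' to ``$\beta_k\in\Delta_F$''. Your proposal here is circular and vague. Saying ``the $\beta_k$ play the role of the root function of $\subwordComplex{Q_F}{\pi_F}$'' invokes Theorem~\ref{thm:docompositon}, whose proof \emph{uses} this lemma. The plan to ``check indecomposability inductively via Lemma~\ref{lem:subword_flips}(2)'' is not a strategy: that lemma describes how roots transform under flips of facets of the \emph{original} complex, and gives no handle on whether a given root of $\Phi_F$ can be decomposed as a sum of two positive roots of $\Phi_F$. You also never establish positivity of $\beta_k$, which would be prerequisite to any indecomposability argument.

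The paper's argument is quite different and supplies exactly the missing idea. It observes that $\beta_k$ is an inversion of the word $P=(P_k,q_{j_k})$, where $P_k$ is the subword of $Q$ at positions $B_{j_k}$, and shows (again via Equation~\eqref{for:prod_Aj_Bj}) that none of the \emph{earlier} inversions of $P$ lie in $V_F$. Hence $\beta_k$ is the \emph{first} inversion of $P$ landing in the root subsystem $\Phi_F$. The paper then appeals to Proposition~\ref{prop:first_inversion_in_F} in Appendix~\ref{Appendix:inversions}: the first inversion of any (not necessarily reduced) word that falls into a root subsystem $\Phi'$ is automatically a simple root of $\Phi'$. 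This geometric fact — proved via the Tits cone picture, and valid for infinite $W$ by Dyer's theorem — is the key input you are missing.
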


\begin{proof}
For every $\ell \in [r]$, consider the sets $A_\ell=[\ell-1]\ssm I$ and $B_\ell = ([\ell-1]\ssm I) \ssm J_F$ as above. 

Let $s_{\Root{I}{\ell}}\in W$ denote the reflection that is orthogonal to the root~$\Root{I}{\ell}= \wordprod{A_\ell}(\alpha_{q_\ell})$. Since this reflection can be written as the conjugate $\wordprod{A_\ell}{q_\ell} \wordprod{A_\ell}^{-1}$, one deduces the formula 

\begin{equation}\label{for:prod_Aj_Bj}
\wordprod{B_\ell} = \left( \prod_{k\in A_\ell \cap J_F} s_{\Root{I}{k}} \right) \wordprod{A_\ell}.
\end{equation}
Denote by~$P_k$ the subword of~$Q$ with positions in the set~$B_{j_k}$, for~$k\in[r']$. 
If $\ell \in B_{j_k}$, then its corresponding root in the list of inversions of $P_k$ is $\wordprod{B_\ell}(\alpha_{q_\ell})$, which by Equation~\eqref{for:prod_Aj_Bj} is equal to 
\[
\left( \prod_{k\in A_\ell \cap J_F} s_{\Root{I}{k}} \right) \Root{I}{\ell}. 
\]
 Since $k\in J_F$, all the terms $\Root{I}{k}$ in the expression belong to the flat $F$, while~$\Root{I}{\ell}$ does not. Therefore, non of the inversions of $P_k$ belong to the subspace $V_F$ spanned by $F$.
 On the other hand,
 \[
 \beta_k = P_k(\alpha_{q_{j_k}}) = \left( \prod_{k\in A_{j_k} \cap J_F} s_{\Root{I}{k}} \right) \Root{I}{j_k}. 
 \]
Since all the terms $\Root{I}{k}$ and $\Root{I}{j_k}$ are in $F$, the root $\beta_k \in V_F$. 
Thus, $\beta_k$ is the first root in the list of inversions of the word $P=(P_k, q_{j_k})$ that belongs to the root subsystem $\Phi_F\subset \Phi$. By Proposition~\ref{prop:first_inversion_in_F}, we deduce that $\beta_k$ is a simple root of $\Phi_F$.
\end{proof}

We will define a subword complex $\subwordComplex{Q_F}{\pi_F}$ and a facet $I_F$ associated to $F$. Denote by
\[
Q_F:=(q'_1,\dots , q'_{r'})
\]
the word whose letters are the generators of the Coxeter group~$W_F$ corresponding to the simple roots $\beta_1,\dots ,\beta_{r'}$. The set $I_F$ corresponding to $I$ is given by
\[
I_F := \{i\in [r'] : j_i \in I \},
\]
and the element~$\pi_F\in W_F$ is the product of the letters in the subword of $Q_F$ with positions at the complement of $I_F$. We also denote by $\bar I_F$ the face of $\subwordComplex{Q}{\pi}$ corresponding to $I_F$, or in other words, the elements $i\in I$ whose corresponding roots~$\Root{I}{i}$ belong to~$F$.

\begin{theorem}[Decomposition theorem of subword complexes]
\label{thm:docompositon}
Let $I$ be a facet of a subword complex $\subwordComplex{Q}{\pi}$ of (possibly infinite) type $W$. If $F$ is a flat of~$\Root{I}{Q}$, then $F$ is the root function of the subword complex $\subwordComplex{Q_F}{\pi_F}$ of type $W_F$ with respect to the facet $I_F$. Moreover,
\[
\subwordComplex{Q_F}{\pi_F} \cong \operatorname{Link}_{\subwordComplex{Q}{\pi}}(I\setminus \bar I_F).
\]
\end{theorem}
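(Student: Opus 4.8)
The plan is to prove the two assertions of Theorem~\ref{thm:docompositon} in sequence: first that $F$ equals the root function $\Root{I_F}{Q_F}$ of the newly defined subword complex with respect to $I_F$, and second that this subword complex is isomorphic to the link of the face $I\setminus \bar I_F$ in the original complex. The backbone of everything is Lemma~\ref{lem:simple_roots}, which already tells us that the list $\beta_F=(\beta_1,\dots,\beta_{r'})$ consists of simple roots of $\Phi_F$; by construction $Q_F$ is the word in the generators of $W_F$ reading off these simple roots, so the two pieces of data $Q_F$ and $\beta_F$ are built to match.

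\textbf{Step 1: Identifying $F$ as a root function.} First I would verify, directly from Definition~\ref{def:rootFunction} applied inside $W_F$, that $\Root{I_F}{k}=\Root{I}{j_k}$ for every $k\in[r']$. Unwinding the definition, $\Root{I_F}{k}=\sigma_{A'_k}(\beta_k)$ where $A'_k=[k-1]\ssm I_F$, and $\sigma_{A'_k}$ is a product of generators of $W_F$, each of which is a reflection $s_{\Root{I}{m}}$ for some $m\in J_F\ssm I$ with $m<j_k$. The content of Lemma~\ref{lem:simple_roots} is precisely that $\beta_k=\bigl(\prod_{m\in A_{j_k}\cap J_F} s_{\Root{I}{m}}\bigr)\Root{I}{j_k}$; comparing the reflections accumulated in $\sigma_{A'_k}$ against those stripped off in passing from $\Root{I}{j_k}$ to $\beta_k$, one sees they are exactly inverse to each other, so the two compositions cancel and $\Root{I_F}{k}=\Root{I}{j_k}$. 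This identifies $F$, as a list, with the root configuration $\Root{I_F}{Q_F}$, and it simultaneously shows $I_F$ is a facet: the positions not in $I_F$ read a reduced word for $\pi_F$ by the very definition of $\pi_F$, and irreducibility/facetness in $W_F$ follows since the $\beta_k$ span $V_F$.

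\textbf{Step 2: The link isomorphism.} The cleanest route to the second claim is to exhibit the bijection on positions $k\leftrightarrow j_k$ and check it carries faces to faces. A subset $G\subseteq[r']$ is a face of $\subwordComplex{Q_F}{\pi_F}$ iff the complementary subword of $Q_F$ contains a reduced word for $\pi_F$; I would translate this condition, using Step 1, into the statement that the corresponding positions $\{j_k:k\in G\}$ together with the fixed face $I\ssm\bar I_F$ form a face of $\subwordComplex{Q}{\pi}$. The link of $I\ssm\bar I_F$ consists exactly of the faces of the form $(I\ssm\bar I_F)\cup G'$ that lie in $\subwordComplex{Q}{\pi}$ with $G'$ disjoint from $I\ssm\bar I_F$; since the roots of positions outside $J_F$ never lie in $V_F$ (again by the inversion computation in Lemma~\ref{lem:simple_roots}), removing the reduced word for $\pi/\pi_F$ carried by $I\ssm\bar I_F$ reduces the reducedness condition in $W$ to the analogous condition in $W_F$. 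This gives a simplicial bijection between the faces of $\subwordComplex{Q_F}{\pi_F}$ and the faces of $\operatorname{Link}_{\subwordComplex{Q}{\pi}}(I\ssm\bar I_F)$.

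\textbf{The main obstacle} I expect is Step 2, specifically verifying that the reducedness condition genuinely \emph{factors} through $W_F$: I must show that a subword of $Q$ (extending the fixed face) represents $\pi$ reducedly in $W$ if and only if its $W_F$-part represents $\pi_F$ reducedly in $W_F$. This requires knowing that multiplying by generators whose roots lie in $F$ interacts predictably with the parabolic-type structure $\Phi_F\subset\Phi$, and in the infinite-type case it leans on Dyer's theorem that $\Phi_F$ is itself a root system with simple system inside $\Phi^+$. I would control this by tracking the root function through flips (Lemma~\ref{lem:subword_flips}) to reduce to a single convenient facet, and by invoking the length-additivity of the factorization across the subspace $V_F$; the appendix on inversion sequences should supply the precise statement (Proposition~\ref{prop:first_inversion_in_F}) needed to make the ``first root in $\Phi_F$ is simple'' argument do double duty here.
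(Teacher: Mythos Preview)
Your Step 1 is correct and matches the paper almost exactly: both compute $\Root{I_F}{k}$ by writing the relevant prefix of $Q$ as an alternation of ``inside $J_F$'' and ``outside $J_F$'' blocks, recognizing the generators of $W_F$ as conjugated reflections $s_{\Root{I}{m}}$, and telescoping. The paper also notes, as you should, that the complement of $I_F$ in $Q_F$ is \emph{reduced} because its inversions form a sublist of the (distinct) inversions of the reduced word for $\pi$.

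For Step 2 your route diverges from the paper's and is harder than necessary. You try to verify directly that the ``contains a reduced expression of $\pi$'' condition on $Q$ factors through $W_F$, and you correctly flag this as the main obstacle; your phrasing about ``the reduced word for $\pi/\pi_F$ carried by $I\ssm\bar I_F$'' is not quite right (those positions are in the facet, not in the complement, and there is no canonical quotient $\pi/\pi_F$ in general). The paper sidesteps this entirely: once Step 1 shows the root functions agree under $k\leftrightarrow j_k$, Lemma~\ref{lem:subword_flips} says flips are determined solely by the root function, so the flip graphs of $\subwordComplex{Q_F}{\pi_F}$ and of $\operatorname{Link}_{\subwordComplex{Q}{\pi}}(I\ssm\bar I_F)$ are isomorphic under that bijection; since both complexes are pure and their facet graphs are connected, this already gives the simplicial isomorphism. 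In other words, the flip lemma is not a tool to rescue the reducedness-factorization argument, it \emph{replaces} it. Your fallback hint (``tracking the root function through flips'') is in fact the whole argument.
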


\begin{proof}
Let $k\in[r']$ be a fixed position in the word~$Q_F$. We need to show $\RootF{I_F}{k}=\Root{I}{j_k}$, 
where~$\mathsf{r}_F$ denotes the root function of the subword complex $\subwordComplex{Q_F}{I_F}$. 
For simplicity, let $P$ be the subword of $Q$ with positions at $[j_k-1]\ssm I$. This subword can be subdivided into parts 
\[
P=(P_1,p_{i_1},\ P_2,p_{i_2}, \dots,\ P_m,p_{i_m},\ P_{m+1}),
\]
where~$p_{i_1},\dots,p_{i_m}$ are the letters whose corresponding roots in the root function belong to the flat~$F$. The corresponding subword of~$Q_F$ is given by $P'=(p_1',\dots,p_m')$, where $p_\ell'$ is the reflection in $W_F$ orthogonal to $P_1P_2\dots P_\ell(\alpha_{p_{i_\ell}})$.
This reflection can be written as
\[
p_\ell' = P_1\dots P_\ell \ \  p_{i_\ell} \ P_\ell^{-1}\dots  P_1^{-1}.
\]
The root function associated to the flat $F$ can be then computed as
\begin{align*} 
\RootF{I_F}{k} &= p_1'\dots p_m' (\beta_k)  \\ 
 &=  \left( \prod_{\ell=1}^m P_1\dots P_\ell \ \  p_{i_\ell} \ P_\ell^{-1}\dots P_1^{-1} \right) \left(P_1\dots P_{m+1}(\alpha_{q_{j_k}}) \right) \\ 
 &= P_1\ p_{i_1}\ P_2\ p_{i_2}\ \dots\ P_m\ p_{i_m}\ P_{m+1}\ (\alpha_{q_{j_k}})  \\ 
 &=  \Root{I}{j_k}. 
\end{align*}
Therefore, the flat $F$ is the root function of the subword complex $\subwordComplex{Q_F}{\pi_F}$ with respect to the facet $I_F$. 
Note that the subword of $Q_F$ with positions at the complement of $I_F$ is a reduced expression of the element~$\pi_F\in W_F$. The reason is that the roots in its inversion set are all different. In fact, this inversion set is formed by the roots~$\Root{I}{j_k}$ for~$k\notin I_F$, which is a subset of the inversion set of the reduced expression of $\pi$ given by the subword of $Q$ with positions at the complement of~$I$. 

Finally, the faces in the link of~$I\setminus \bar I_F$ in $\subwordComplex{Q}{\pi}$ can be obtained from~$I$ by flipping positions whose corresponding roots belong to the flat~$F$. By Lemma~\ref{lem:subword_flips}, these flips only depend on the root function, and therefore are encoded by the root function of~$\subwordComplex{Q_F}{\pi_F}$. As a consequence,
\[
\subwordComplex{Q_F}{\pi_F} \cong \operatorname{Link}_{\subwordComplex{Q}{\pi}}(I\setminus \bar I_F).
\]
\end{proof}

\begin{remark}\label{rem:decomposition_theorem}
The Decomposition theorem of subword complexes (Theorem~\ref{thm:docompositon}) is a generalization of~\cite[Lemma~5.4]{ceballos_subword_2013}, which is a particular case for a family of subword complexes related to the $c$-cluster complexes in the theory of cluster algebras.
The result for finite types can be found in a slightly weaker version in~\cite[Proposition~3.6]{PilaudStump-brickPolytopes}\footnote{There is a small mistake in the statement of~\cite[Proposition~3.6]{PilaudStump-brickPolytopes}. The restricted subword complex is isomorphic to the link of a face of the initial subword complex, and not to the faces reachable from the initial facet as suggested.}, which is used as an inductive tool in the construction of Coxeter brick polytopes. 
The present version of the Theorem is stronger for two reasons. First, in~\cite[Proposition~3.6]{PilaudStump-brickPolytopes}, the restricted subword complex is not explicitly described but recursively constructed in the proof of the result by scanning the word from left to right, while the present version gives a precise description of the word $Q_F$ in the restricted subword complex. Second,~\cite[Proposition~3.6]{PilaudStump-brickPolytopes} was proven for finite Coxeter groups, while the present version works uniformly for arbitrary Coxeter groups, finite or not.

 \end{remark}

\begin{corollary}\label{cor:reducible}
A reducible subword complex is isomorphic to an irreducible subword complex of smaller rank.
\end{corollary}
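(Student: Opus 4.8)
The plan is to realize the desired smaller complex directly as an instance of the Decomposition Theorem (Theorem~\ref{thm:docompositon}), taking the flat to be the span of the root configuration itself. Fix any facet $I$ of the reducible complex $\subwordComplex{Q}{\pi}$; by definition of reducibility the subspace $U := \operatorname{span}(\Roots{I})$ is proper in $V$. I would set $F := U \cap \Root{I}{Q}$, the sublist of the full root function lying in $U$, which is a flat by construction. Because each root $\Root{I}{i}$ with $i \in I$ lies in $U$, the entire root configuration $\Roots{I}$ is contained in $F$; from the inclusions $\Roots{I} \subseteq F \subseteq U$ together with $\operatorname{span}(\Roots{I}) = U$ one then gets $V_F = U$.

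Feeding this flat into the Decomposition Theorem produces a subword complex $\subwordComplex{Q_F}{\pi_F}$ of type $W_F$ together with the isomorphism $\subwordComplex{Q_F}{\pi_F} \cong \operatorname{Link}_{\subwordComplex{Q}{\pi}}(I \ssm \bar I_F)$. The crucial consequence of $\Roots{I} \subseteq F$ is that $\bar I_F$ --- the set of $i \in I$ whose root lies in $F$ --- equals all of $I$; hence $I \ssm \bar I_F = \emptyset$, its link is the whole complex, and so $\subwordComplex{Q_F}{\pi_F} \cong \subwordComplex{Q}{\pi}$.

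Two quantitative checks then finish the argument. First, the rank of $W_F$ equals $\dim V_F = \dim U < \dim V$, so the new complex has strictly smaller rank. Second, the Decomposition Theorem identifies the root function of the new complex with $F$ via $\RootF{I_F}{k} = \Root{I}{j_k}$; restricting to $k \in I_F$ shows that its root configuration coincides with $\Roots{I}$, which spans $U = V_F$, so $\subwordComplex{Q_F}{\pi_F}$ is indeed irreducible. I expect no genuine obstacle: all the substance is carried by the Decomposition Theorem, and the only decision that matters is the choice $U = \operatorname{span}(\Roots{I})$, engineered precisely so that the deleted face $I \ssm \bar I_F$ vanishes and the link collapses onto the entire complex. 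The remaining steps are routine bookkeeping resting on the single containment $\Roots{I} \subseteq F$.
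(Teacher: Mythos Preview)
Your proof is correct and follows exactly the same strategy as the paper: take the flat $F$ consisting of the roots in $\Root{I}{Q}$ lying in $\operatorname{span}(\Roots{I})$, then invoke the Decomposition Theorem. The paper's argument is stated tersely (it only asserts $\subwordComplex{Q}{\pi}\cong\subwordComplex{Q_F}{\pi_F}$, that $W_F$ has smaller rank, and that the result is irreducible), whereas you spell out explicitly why $\bar I_F=I$, why the link of the empty face is the whole complex, and why the root configuration of the restricted complex still spans $V_F$; but the content is the same.
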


\begin{proof}
Suppose that the root configuration $R(I)$ does not generate the space $V$ for some facet $I$. 
Note that
\[
\subwordComplex{Q}{\pi} \cong \subwordComplex{Q_F}{\pi_F}
\]
for the flat $F$ consisting of the roots that belong to the span of $R(I)$. Since $W_F$ is a Coxeter group of smaller rank and $\subwordComplex{Q_F}{\pi_F}$ is irreducible, the result follows.
\end{proof}

\begin{example}[Example~\ref{ex:A3k2} continued]
Consider the subword complex in Example~\ref{ex:A3k2} and the root function associated to the facet~$I=\{1,3,7,8,9,10\}$ (also illustrated in Figure~\ref{fig:A3k2_roots}):
\[
\begin{array}{rcccccccccccc}
Q=  & (\circled{\blue{s_1}},  & \red{s_2},  & \circled{\blue{s_3}},  & \red{s_1},  & \circled{\red{s_2}},  & \circled{\red{s_3}},  & \blue{s_1},  & \circled{\blue{s_2}},  & \circled{\blue{s_3}},  & \blue{s_1},  & \circled{\red{s_2}}, & \red{s_1})  \\
\Root{I}{Q}= & (\blue{\alpha_1},  & \red{\alpha_2},  & \blue{\alpha_{23}},  & \red{\alpha_{12}}, & \red{\alpha_1}, & \red{\alpha_{123}},  & \blue{-\alpha_2}, & \blue{\alpha_{23}},& \blue{-\alpha_{123}},  & \blue{-\alpha_2}, &\red{\alpha_{23}}, & \red{\alpha_3})
\end{array}
\]

\noindent
Let $F=(\blue{\alpha_1},\ \blue{\alpha_{23}},\ \red{\alpha_1},\ \red{\alpha_{123}},\ \blue{\alpha_{23}},\ \blue{-\alpha_{123}},\ \red{\alpha_{23}})$ be the flat at positions $J_F=\{1,3,5,6,8,9,11\}$. The list of beta simple roots and the associated word are
\[
\begin{array}{rccccccc}
 \beta_F= & (\blue{\alpha_1},  & \blue{\alpha_{23}}, & \red{\alpha_1},  & \red{\alpha_{23}}, & \blue{\alpha_1},  & \blue{\alpha_{23}}, & \red{\alpha_1} )    \\
 Q_F= & (\blue{s_1},  & \blue{s_{23}}, & \red{s_1},  & \red{s_{23}}, & \blue{s_1},  & \blue{s_{23}}, & \red{s_1} )   
\end{array}
\]

\noindent
There is one root in $\beta_F$ for each circled letter $s_j$ in $Q$. This root is computed by applying all the underlined red letters which are not circled on its left to $\alpha_j$. For example, for the fourth circled letter, which is an $s_3$ in this case, one gets the root $\beta_4=\red{s_2}\red{s_1}(\red{\alpha_3})=\red{\alpha_{23}}$. The restricted facet is $I_F=\{1,2,5,6\}$ and the element $\pi_F=s_1s_{23}s_1$. The Coxeter group $W_F$ is generated by the simple transpositions $\{s_1,s_{23}\}$, and turns out to be isomorphic to the symmetric group $\mathcal S_3$. Thus, $\subwordComplex{Q_F}{\pi_F}$ can be written as the type $A_2$ subword complex
\[
\subwordComplex{Q_F}{\pi_F} = \subwordComplex{(s_1,s_2,s_1,s_2,s_1,s_2,s_1)}{[3\ 2\ 1]}.
\]

\noindent
On the other hand, the link of $I\setminus \bar I_F$ in $\subwordComplex{Q}{\pi}$ is obtained by deleting the two non-underlined letters in Q which are not circled,  
\[
\operatorname{Link}_{\subwordComplex{Q}{\pi}}(I\setminus \bar I_F) = \subwordComplex{(s_1,s_2,s_3,s_1,s_2,s_3, -,s_2,s_3, -,s_2,s_1)}{[4\ 3\ 2\ 1]}.
\]

\noindent
This subword complex is not irreducible, since its root configuration spans only a 2-dimensional subspace. Moreover, it is isomorphic to $\subwordComplex{Q_F}{\pi_F}$ which is a subword complex of smaller rank. Note that these two complexes being isomorphic is not a straight forward fact without using the concept of root functions. For example, one can see that positions 2,4, and 10 in the second are non-vertices of the complex, as they appear in every reduced expression of $[4\ 3\ 2\ 1]$ in the word. Finding a characterization of the non-vertices of subword complexes is an open problem in general.

The tuple $(W_F,Q_F,\pi_F,I_F)$ is called a \defn{flat decomposition} of $(W,Q,\pi,I)$. All possible flat decompositions for the previous example are illustrated in Figure~\ref{fig:A3k2_flats}. The particular example we computed is the second from top to bottom in the middle column. The shaded examples are the non irreducible ones.

\begin{figure}[htbp]
\begin{center}
\includegraphics[width=0.95\textwidth]{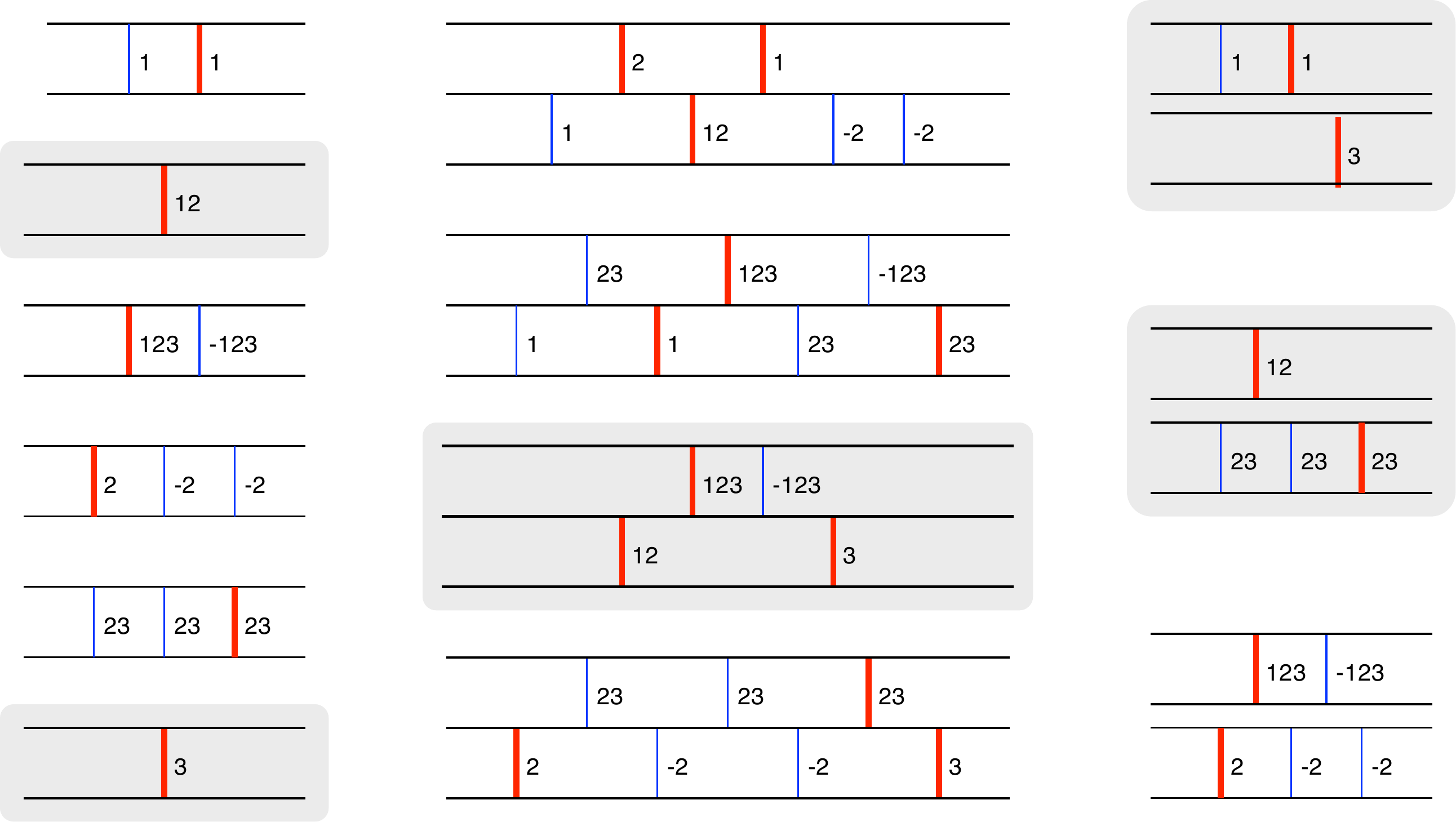}
\caption{Flat decompositions of the subword complex in Figure~\ref{fig:A3k2_roots} for the given facet.}
\label{fig:A3k2_flats}
\end{center}
\end{figure}

\end{example}

\begin{example}[Affine type $\tilde A_2$]
Let $W=\tilde A_2$ be the affine Coxeter group with simple generators $S=\{\tilde s_0,\tilde s_1,\tilde s_2\}$ satisfying $(\tilde s_0\tilde s_1)^3=(\tilde s_0\tilde s_2)^3=(\tilde s_1\tilde s_2)^3=Id$. Let $Q=(\tilde s_0,\tilde s_1,\tilde s_2,\tilde s_0,\tilde s_1,\tilde s_2,\tilde s_0,\tilde s_1,\tilde s_2)$ and $\pi=\tilde s_0\tilde s_1\tilde s_2\tilde s_0\tilde s_1\tilde s_2$. The subword complex $\subwordComplex{Q}{\pi}$ has 9 vertices and 6 maximal facets $\{1,2,3\},\{2,3,4\},\{3,4,5\},\{4,5,6\},\{5,6,7\},\{6,7,8\},\{7,8,9\}$. A picture of this subword complex is illustrated in Figure~\ref{fig:A2affine}.

\begin{figure}[htbp]
\begin{center}
\includegraphics[width=0.45\textwidth]{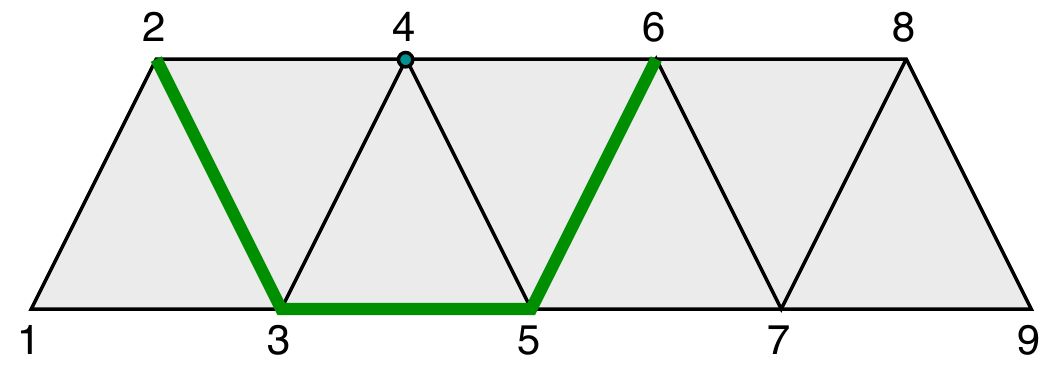}
\caption{A subword complex $\subwordComplex{Q}{\pi}$ of an affine Coxeter group of type $\tilde A_2$ for $Q=(\tilde s_0,\tilde s_1,\tilde s_2,\tilde s_0,\tilde s_1,\tilde s_2,\tilde s_0,\tilde s_1,\tilde s_2)$ and $\pi=\tilde s_0\tilde s_1\tilde s_2\tilde s_0\tilde s_1\tilde s_2$. The bold green sub-complex is the link of the vertex 4, and is isomorphic to a restricted subword complex described below. }
\label{fig:A2affine}
\end{center}
\end{figure}

\noindent
Let $\{\tilde \alpha_0,\tilde \alpha_1,\tilde \alpha_2\}$ be the simple roots of the root system of type $\tilde A_2$. The group acts on the roots according the following rule which is extended by linearity, 
\[
\tilde s_i(\tilde \alpha_j) =
\begin{cases}
  -\tilde \alpha_j &\text{if } i=j,\\
  \tilde \alpha_i+\tilde \alpha_j &\text{otherwise.}
\end{cases}
\]
The root function with respect to the facet $I=\{3,4,5\}$ is given by,
 \[
\begin{array}{rccccccccc}
Q=  & (\red{\tilde s_0},  & \circled{\red{\tilde s_1}},  & \circled{\blue{\tilde s_2}},  & \blue{\tilde s_0},  & \circled{\blue{\tilde s_1}},  & \circled{\red{\tilde s_2}},  & \red{\tilde s_0},  & \red{\tilde s_1},  & \red{\tilde s_2})  \\
\Root{I}{Q}= & (\red{\tilde \alpha_0},  & \circled{\red{\tilde \alpha_{01}}},  & \circled{\blue{\tilde \alpha_{0012}}},  & \blue{\tilde \alpha_1},  & \circled{\blue{-\tilde \alpha_{01}}},  & \circled{\red{\tilde \alpha_{0012}}},  & \red{\tilde \alpha_{00112}},  & \red{\tilde \alpha_{0001122}},  & \red{\tilde \alpha_{00011122} }),
\end{array}
\]
where the roots are represented by the subindices used when written as a linear combination of simple roots. For example, $\tilde \alpha_{0001122}=3\tilde \alpha_0+2\tilde \alpha_1+2\tilde \alpha_2$ since there are three 0's, two 1's, and two~2's. 

Let $F=({\red{\tilde \alpha_{01}}}, {\blue{\tilde \alpha_{0012}}},{\blue{-\tilde \alpha_{01}}}, {\red{\tilde \alpha_{0012}}})$ be the flat at positions $J_F=\{2,3,5,6\}$. 
The list of beta simple roots and the associated word are
\[
\begin{array}{rcccc}
 \beta_F= &  ({\red{\tilde \alpha_{01}}}, &{\blue{\tilde \alpha_{02}}},&{\blue{\tilde \alpha_{01}}},& {\red{\tilde \alpha_{02}}}) \\
 Q_F= & (\red{\tilde s_{01}},  & \blue{\tilde s_{02}}, & \blue{\tilde s_{01}},  & \red{\tilde s_{02}})   
\end{array}
\]
The restricted facet is $I_F=\{2,3\}$ and the element $\pi_F=\tilde s_{01} \tilde s_{02}$. The Coxeter group $W_F$ is generated by $\{s_{01},s_{02}\}$ and is isomorphic to a Coxeter group of type $A_2$.
The restricted subword complex is 
\[
\subwordComplex{Q_F}{\pi_F} = \subwordComplex{(\tilde s_{01},\tilde s_{02},\tilde s_{01},\tilde s_{02})}{\tilde s_{01}\tilde s_{02}}.
\]
It has 4 vertices and 3  one dimensional facets $\{1,2\},\{2,3\},\{3,4\}$.
The face corresponding to $I_F$ in the original subword complex is $\bar I_F=\{3,5\}$. 
The set $I\smallsetminus \bar I_F = \{4\}$ and the $\operatorname{Link}_{\subwordComplex{Q}{\pi}}(I\setminus \bar I_F)$ is the link of vertex 4 in Figure~\ref{fig:A2affine}, which has 4 vertices and 3 one dimensional facets $\{2,3\},\{3,5\},\{5,6\}$.
This verifies that $\subwordComplex{Q_F}{\pi_F} \cong \operatorname{Link}_{\subwordComplex{Q}{\pi}}(I\setminus \bar I_F)$ as implied by Theorem~\ref{thm:docompositon}.


\end{example}

%
%
%
%

\section{A Hopf algebra of subword complexes}\label{sec:Hopf}

Let~$Y_n$ be the set of equivalent classes of tuples~$(W,Q,\pi,I)$ where $W$ is a (possibly infinite) Coxeter group of rank~$n$, and~$I$ is a facet of an irreducible subword complex~$\subwordComplex{Q}{\pi}$. For $n=0$, by convention, we assume there is a unique empty tuple ${\bf 1}=(W_0,\emptyset,\emptyset,\emptyset)$ and in particular $|Y_0|=1$.
Two tuples are considered to be equivalent, denoted by $(W,Q,\pi,I)\cong (W',Q',\pi',I')$,
if and only if there is an isomorphism $\phi:W\rightarrow W'$ which maps generators of $W$ to generators of $W'$ such that $\phi(Q)=Q'$ up to commutation of consecutive commuting letters, $\phi(\pi)=\pi'$ and $I'$ are the positions in $Q'$ that correspond (up to the performed commutations) to the positions of $I$ in $\phi(Q)$. 
Note that such commutations only alter the  subword complexes by relabelling of its vertices.

The main result of this section is to show that the graded vector space
\[
\YY:= \bigoplus_{n\geq 0}  \ k[Y_n]
\]
may be equipped with a structure of connected graded Hopf algebra. We recommend the reader to \cite{AguiarBergeronSottile} for more on connected graded Hopf algebra's axioms.

\begin{remark}
Note that $k[Y_n]$ is infinite dimensional. In most situations, we need finite dimensional subspaces compatible with the Hopf structure. For this, we introduce a double filtration of the spaces $k[Y_n]$. Let
  $$ k[Y_n] = \bigcup_{m\ge 2 \atop \ell\ge 1} k[Y_n^{m,\ell}]$$
  where $Y_n^{m,\ell}$ is the set of equivalent classes of tuples~$(W,Q,\pi,I)$ such that
  $W$ is a of rank~$n$, $Q$ is of length $\le \ell$, and for any two generators $s_i,s_j\in S$,
  the smallest $m_{ij}$ such that $(s_is_j)^{m_{ij}}=1$ satisfies $m_{ij}\le m$ or $m_{ij}=\infty$.
  We now have that $Y_n^{m,\ell}$ is finite, hence $k[Y_n^{m,\ell}]$ is finite dimensional.
  Moreover
   $$k[Y_n^{m,\ell}] \subseteq k[Y_n^{m+1,\ell}] \quad \text { and } \quad k[Y_n^{m,\ell}]\subseteq k[Y_n^{m,\ell+1}].$$
   We will see that this filtration is compatible with the Hopf structure that we introduce.
\end{remark}

\subsection{Comultiplication $\Delta\colon \YY\to\YY\otimes\YY$ and counit $\epsilon\colon\YY\to k$.}

\begin{definition} Let $V$ denote the space generated by $\Roots{I}$.
A \defn{k-flat-decomposition} of $\Roots{I}$ is a $k$-tuple of flats $(F_1,F_2,\ldots,F_k)$ such that the $F_i$'s are irreducible flats of $\Root{I}{Q}$, that is the space $V_{F_i}$ 
spanned by $F_i$ is the same as the space spanned by the roots in  $\Roots{I_{F_i}}$, and  we also require that
$V=V_{F_1} \oplus V_{F_2} \oplus \cdots \oplus V_{F_k}$.
\end{definition}

\begin{definition}\label{def:comulti}
The subword complex \defn{comultiplication} of a tuple $(W,Q,\pi,I)$ is defined as 
\[
\Delta(W,Q,\pi,I) :=  \sum_{(F_1,F_2)} (W_{F_1},Q_{F_1}, \pi_{F_1}, I_{F_1}) \otimes (W_{F_2},Q_{F_2}, \pi_{F_2}, I_{F_2})\,,
\]
where the sum is over all 2-flat-decompositions $(F_1,F_2)$ of $\Roots{I}$. The map $\Delta$ is then extended to $\YY$ by linearity. 
\end{definition}

The comultiplication $\Delta$ is clearly coassociative since both $(Id\otimes\Delta)\Delta$ and $(\Delta\otimes Id)\Delta$ depend only on $\text{3-flat-decompositions of } \Roots{I}$.
Furthermore, we have that $\Delta$ is graded since for any $\text{2-flat-decomposition}$~$(F_1,F_2)$ of $\Roots{I}$, we have that the dimensions of the flats add to the dimension of $\Roots{I}$.
In addition, $(F_2,F_1)$ is as well a 2-flat decomposition of $R(I)$, which makes $\Delta$ a cocommutative operation. Remark that the length of $Q$ is greater than or equal to the sum of the length of $Q_{F_1}$ and $Q_{F_2}$. Now, if we take two generators $s_i,s_j$ of $W_{F_1}$, then either $m_{ij}=\infty$ or $m_{ij}=m_{i'j'}$ for some generators $s_{i'},s_{j'}$ of $W$ (This follows from~\cite[Thm 4.5.3]{Bjorner_Brenti}). Thus we have
  $$\Delta\colon k[Y_n^{m,\ell}] \longrightarrow \bigoplus_{n_1+n_1=n \atop \ell_1+\ell_2=\ell} 
     k[Y_{n_1}^{m,\ell_1}] \otimes k[Y_{n_2}^{m,\ell_2}] \,.$$

The counit for $\Delta$ is given by $\epsilon\colon\YY\to k$ where $\epsilon(Y_n)=0$ for $n>0$ and   $\epsilon({\bf 1})=1$.
This map clearly satisfies the axioms of a counit. This makes $(\YY,\Delta,\epsilon)$ a graded cocommutative coalgebra.
 
\begin{example}[Example~\ref{ex:A2} continued]
\label{ex:A2_comultiplication}
The comultiplication of the facet $I=\{2,3,5\}$ of the the subword complex $\subwordComplex{(s_1,s_2,s_1,s_2,s_1)}{s_1s_2}$ in Example~\ref{ex:A2} is given in Figure~\ref{fig:A2_comultiplication}. Recall that the indices on the right of each commutator represent the indices of the corresponding root in the root function. For example 12 represents the root $\alpha_{12}=\alpha_1+\alpha_2$. 

\begin{figure}[htbp]
\begin{center}
\includegraphics[width=0.98\textwidth]{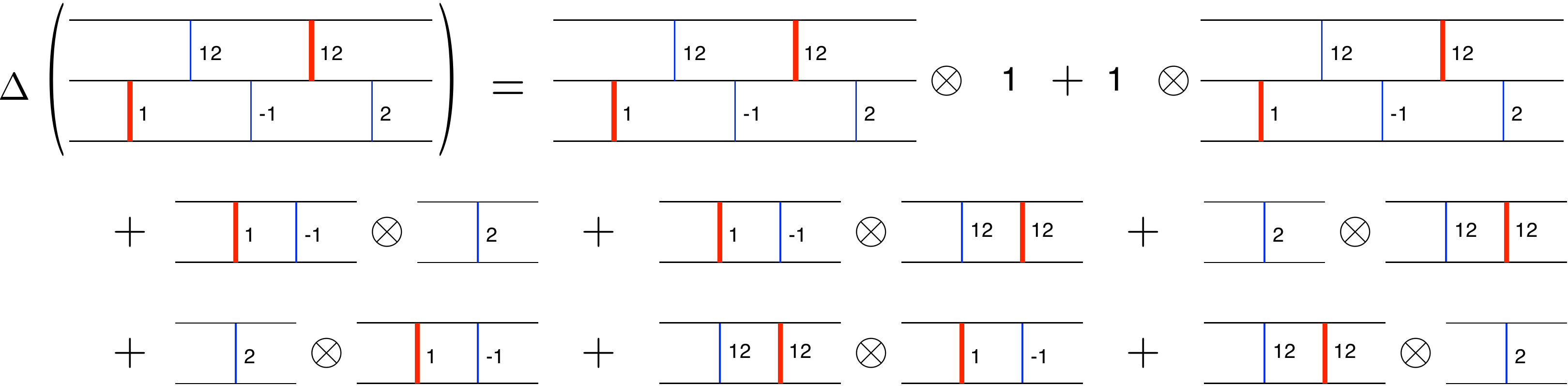}
\caption{Example of the subword complex comultiplication.}
\label{fig:A2_comultiplication}
\end{center}
\end{figure}
\end{example}

\subsection{Multiplication $m\colon \YY\otimes \YY\to \YY$ and unit $u\colon k\to \YY$.}

Let~$W$ and $W'$ be two Coxeter groups with generating sets~$S=\{s_1,\dots, s_n\}$ and~$S'=\{s_1',\dots,s_n'\}$ respectively. Let~$\Phi$ and ~$\Phi'$ be two associated root systems with simple roots~$\Delta=\{\alpha_s \ |\  s\in S\}$ and~$\Delta'=\{\alpha_{s'} \ |\  s'\in S'\}$. We denote by~$\overline W=W\times W'$ the \defn{augmented Coxeter group} generated by the disjoint union~$S \sqcup S'$, where the generators of~$S$ are set to commute with all the generators of~$S'$. In other words, the Coxeter graph of~$\overline W$ is the union of the Coxeter graphs of~$W$ and~$W'$. The corresponding \defn{augmented root system} with simple roots~$\Delta \sqcup \Delta'$ is denoted by~$\overline \Phi$. 

Throughout this section, the word~$Q=(q_1,\dots,q_r)$ will denote a word in~$S$, $\pi$ an element of~$W$, and~$I\subset [r]$ a facet of the subword complex~$\subwordComplex{Q}{\pi}$. Similarly,~$Q'=(q_1',\dots,q_{r'}')$, $\pi'\in W'$ and $I'\subset [r']$ will denote their analogous for the Coxeter system $(W',S')$. 

\begin{definition}\label{def:multi}
The subword complex \defn{multiplication} between two tuples is defined by
\[
(W,Q,\pi,I) \cdot (W',Q',\pi',I') = (\overline W,QQ' , \pi\pi', \shifted{II'}),
\]
were $QQ'=(q_1,\dots,q_r,q_1',\dots,q_{r'}')$ denotes the concatenation of $Q$ and $Q'$, $\pi\pi'$ is the product of $\pi$ and $\pi'$ in $\overline W$, and~$\shifted{II'}$ denotes the {\sl shifted} facet 
\[
\shifted{II'} := \{k\in [r+r'] \ : \  k=i \text{ for some } i\in I \text{, or } k=i'+r \text{ for some } i'\in I' \}
\]
The map $m$ is then extended to $\YY\otimes\YY$ by linearity. 
\end{definition}

Since concatenation and union are associative operations, the multiplication $m$ is associative as well. 
The multiplication $m$ is graded as the dimensions are added in the result. The element $\bf 1$ is the unit for the multiplication,
hence the map $u\colon k\to \YY$ defined by $u(1)={\bf 1}$ is the unit map for~$m$. We have that $(\YY,m,u)$ is a graded algebra. Moreover, this algebra is commutative since~$QQ'=Q'Q$ up to commutation of consecutive commuting letters and $\pi\pi'=\pi'\pi$. Remark now that the length of $QQ'$ is the sum of the lengths of $Q$ and $Q'$. Also, for any generators $s_i,s_j$ of $\overline W$, they are either both in $W$ or both in $W'$, or one in each. This shows that
 $$ m\colon k[Y_{n_1}^{m_1,\ell_1}] \otimes k[Y_{n_2}^{m_2,\ell_2}] \longrightarrow
     k[Y_{n_1+n_2}^{\max\{m_1,m_2\},\ell_1+\ell_2}]\,. $$
     
\subsection{Hopf structure.} 

\begin{theorem}
The graded vector space
\[
\YY := \bigoplus_{n\geq 0}  \ k[Y_n]
\]
equipped with the subword complex multiplication and comultiplication is a connected graded Hopf algebra. This Hopf algebra is commutative and cocommutative.
\end{theorem}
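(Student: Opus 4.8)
The coalgebra $(\YY,\Delta,\epsilon)$ and the algebra $(\YY,m,u)$ have already been checked to be graded, with $\Delta$ cocommutative and $m$ commutative, in the discussion preceding the statement; moreover $\YY$ is connected since $k[Y_0]=k\cdot\mathbf 1$. Hence the only missing ingredients are the unit/counit compatibilities and the bialgebra axiom. The former are immediate: $\Delta(\mathbf 1)=\mathbf 1\otimes\mathbf 1$ because the empty configuration has only the trivial $2$-flat-decomposition $(\emptyset,\emptyset)$, and $\epsilon$ is multiplicative because a product of tuples has degree equal to the sum of the degrees, which vanishes exactly when both factors are scalar multiples of $\mathbf 1$. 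Once the bialgebra axiom
\[
\Delta\bigl(m(x\otimes y)\bigr)=m_{\YY\otimes\YY}\bigl(\Delta(x)\otimes\Delta(y)\bigr)
\]
is established (with the untwisted product on $\YY\otimes\YY$, as $\YY$ is commutative), $\YY$ is a connected graded bialgebra, and such a bialgebra is automatically a Hopf algebra, the antipode being built degree by degree by the standard recursion (its explicit form is the content of Section~\ref{sec:antipode}). So the whole proof reduces to verifying the bialgebra axiom on basis tuples $x=(W,Q,\pi,I)$ and $y=(W',Q',\pi',I')$, with product $xy=(\overline W,QQ',\pi\pi',\shifted{II'})$ as in Definition~\ref{def:multi}.

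First I would analyze the root configuration of the product. Since the Coxeter graph of $\overline W$ is the disjoint union of those of $W$ and $W'$, the simple roots of $\Phi$ are orthogonal to those of $\Phi'$, so $\overline V=V\oplus V'$ is an orthogonal direct sum and every element of $W$ fixes $\Phi'$ pointwise, and conversely. A direct inspection of the prefix products in Definition~\ref{def:rootFunction} then shows that a first-block position keeps its old root, while for a second-block position the first-block reflections act trivially on the relevant simple root of $\Phi'$; hence $\Roots{\shifted{II'}}$ is the concatenation $\Roots{I}\sqcup\Roots{I'}$ with $\Roots{I}\subset V$ and $\Roots{I'}\subset V'$. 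The crucial combinatorial point is that flats split. If $\overline F=U\cap\Roots{\shifted{II'}}$ is a flat, set $U_1=U\cap V$ and $U_2=U\cap V'$; as every root in the list lies purely in $V$ or purely in $V'$, a $V$-root lies in $U$ iff it lies in $U_1$, and likewise for $V'$, so
\[
\overline F=\bigl(U_1\cap\Roots{I}\bigr)\sqcup\bigl(U_2\cap\Roots{I'}\bigr)=F\sqcup F',
\]
where $F$ is a flat of $\Roots{I}$ and $F'$ is a flat of $\Roots{I'}$; conversely any such $F\sqcup F'$ is a flat (take $U=U_1\oplus U_2$). This gives a bijection between flats of the product and pairs consisting of a flat of each factor.

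Next I would check, using the explicit data of Lemma~\ref{lem:simple_roots} and Theorem~\ref{thm:docompositon}, that the tuple attached to a combined flat factors as a product:
\[
(W_{\overline F},(QQ')_{\overline F},\pi_{\overline F},I_{\overline F})=(W_F,Q_F,\pi_F,I_F)\cdot(W'_{F'},Q'_{F'},\pi'_{F'},I'_{F'}).
\]
For a first-block position the set $B$ lies entirely in the first block, so its $\beta$-root is computed inside $W$; for a second-block position the first-block reflections occurring in $B$ fix the relevant simple root of $\Phi'$, so its $\beta$-root is computed inside $W'$. Hence $\beta_{\overline F}=\beta_F\sqcup\beta'_{F'}$, and since these simple roots occupy the orthogonal summands $V_F$ and $V_{F'}$, the generated group $W_{\overline F}$ is the augmented product of $W_F$ and $W'_{F'}$, while $(QQ')_{\overline F}=Q_FQ'_{F'}$, $I_{\overline F}=\shifted{I_FI'_{F'}}$, and $\pi_{\overline F}=\pi_F\pi'_{F'}$, matching Definition~\ref{def:multi} exactly.

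Finally, irreducibility and the direct-sum condition both factor through the splitting. We have $V_{\overline F}=V_F\oplus V_{F'}$, and by the previous paragraph $\Roots{I_{\overline F}}=\Roots{I_F}\sqcup\Roots{I'_{F'}}$ spans $V_{\overline F}$ iff each block spans its summand, so $\overline F$ is irreducible iff both $F$ and $F'$ are; similarly $V_{\overline F_1}\oplus V_{\overline F_2}=\overline V$ iff $V_{F_1}\oplus V_{F_2}=V$ and $V_{F'_1}\oplus V_{F'_2}=V'$. Therefore $(\overline F_1,\overline F_2)\mapsto\bigl((F_1,F_2),(F_1',F_2')\bigr)$ is a bijection between $2$-flat-decompositions of $\Roots{\shifted{II'}}$ and pairs of $2$-flat-decompositions of $\Roots{I}$ and $\Roots{I'}$, and summing the tuple identity over this bijection yields $\Delta(xy)=\Delta(x)\,\Delta(y)$. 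I expect the main obstacle to be precisely the tuple-factorization identity of the third paragraph: one must track the prefix products defining $\beta_{\overline F}$, $(QQ')_{\overline F}$ and $\pi_{\overline F}$ carefully and invoke at each step both the orthogonality $V\perp V'$ and the fact that $W$ fixes $\Phi'$, whereas the flat bijection itself is essentially formal once the splitting $\overline F=F\sqcup F'$ is in place.
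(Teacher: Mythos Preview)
Your proposal is correct and follows essentially the same route as the paper: reduce to the bialgebra identity $\Delta(xy)=\Delta(x)\Delta(y)$, then use the orthogonal splitting $\overline V=V\oplus V'$ to show that every 2-flat-decomposition of $\Roots{\shifted{II'}}$ factors as a pair of 2-flat-decompositions of $\Roots{I}$ and $\Roots{I'}$, with the associated tuple data factoring as a product. If anything, you are more explicit than the paper on two points the authors leave implicit, namely the irreducibility of the split flats and the concatenation $\Roots{\shifted{II'}}=\Roots{I}\sqcup\Roots{I'}$.
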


\begin{proof} We only have to show that the structure $(\YY, m, u, \Delta, \epsilon)$ gives a connected graded bialgebra. In such case, the antipode $S\colon \YY\to\YY$ is uniquely determined as in \cite{Ehrenborg, Takeuchi} (see Section~\ref{sec:antipode}). This will give us that $(\YY, m, u, \Delta, \epsilon,S)$ is indeed a Hopf algebra.

We are already given that $(\YY,m,u)$ is a graded algebra and $(\YY,\Delta,\epsilon)$ is a graded coalgebra. 
It is connected since $|Y_0|=1$. Hence, we only need to show that $\Delta$ and $\epsilon$ are morphisms of algebras.
For $\epsilon\colon \YY\to k$, it is clear since $m({\bf 1}\otimes{\bf 1})=1$.
For $\Delta\colon \YY\to\YY\otimes\YY$ we have to show that
  $$ \Delta(xy) = \Delta(x) \Delta(y)$$
  where the multiplication on $\YY\otimes\YY$ is given by $(m\otimes m)(Id\otimes\tau\otimes Id)$ with $\tau(a\otimes b)=b\otimes a$. Let 
  $$ (\overline W,QQ' , \pi\pi', \shifted{II'}) = (W,Q,\pi,I) \cdot (W',Q',\pi',I') $$
as in Definition~\ref{def:multi}. We have that $ \Delta(\overline W,QQ' , \pi\pi', \shifted{II'})$ is given by
\begin{equation}\label{eq:lhs}
\sum_{(\overline F_1, \overline F_2)} (\overline W_{\overline F_1},QQ'_{\overline F_1}, \pi\pi'_{\overline F_1}, II'_{\overline F_1}) \otimes (\overline W_{\overline F_2},QQ'_{\overline F_2}, \pi\pi'_{\overline F_2}, II'_{\overline F_2}),
\end{equation}
where $(\overline F_1, \overline F_2)$ is 2-flat-decomposition of $\Roots{II'}$. Now we recall that $\overline W$ is $W\times W'$ and in particular the roots of $W$ and $W'$ are pairwise orthogonal in $\Roots{II'}$. This implies that $\overline F_1= F_1 \oplus F'_1$ and $\overline F_2= F_2 \oplus F'_2$ where $(F_1,F_2)$ and $(F'_1,F'_2)$ are 2-flat-decompositions of $\Roots{I}$ and $\Roots{I'}$ respectively. Moreover $F_1$ and $F'_1$ are orthogonal and the same hold for $F_2$ and $F'_2$.
If we now compute $\Delta(W,Q,\pi,I)\Delta(W',Q',\pi',I')$ we obtain
\begin{equation}\label{eq:rhs}
\begin{array}{rl}
\displaystyle \sum_{(F_1,F_2)  \atop (F'_1,F'_2)} 
 &\big((W_{ F_1},Q_{ F_1}, \pi_{ F_1}, I_{ F_1}) \otimes (W_{ F_2},Q_{ F_2}, \pi_{ F_2}, I_{ F_2})\big)\times \cr 
 &\qquad \qquad \qquad \big((W'_{ F'_1},Q'_{ F'_1}, \pi'_{ F'_1}, I'_{ F'_1}) \otimes (W'_{ F'_2},Q'_{ F'_2}, \pi'_{ F'_2}, I'_{ F'_2})\big)
\end{array}
\end{equation}
\begin{equation}\label{eq:rhs2}
\begin{array}{rl}
= \displaystyle \sum_{(F_1\oplus F'_1,F_2\oplus F'_2)} 
&(W_{ F_1}\times W'_{ F'_1},Q_{ F_1}Q'_{ F'_1}, \pi_{ F_1} \pi'_{ F'_1}, I_{ F_1}I'_{ F'_1}) \otimes \cr 
&\qquad \qquad \qquad  (W_{ F_2}\times W'_{ F'_2},Q_{ F_2}Q'_{ F'_2}, \pi_{ F_2}\pi'_{ F'_2}, I_{ F_2}I'_{ F'_2}),
\end{array}
\end{equation}
where $(F_1,F_2)$ and $(F'_1,F'_2)$ are as above.  Comparing Eq~\eqref{eq:lhs} and Eq~\eqref{eq:rhs2} we have, for $i=1,2$,
$$ \overline W_{\overline F_i}=W_{ F_i}\times W'_{ F'_i}; \quad QQ'_{\overline F_i}=Q_{ F_i}Q'_{ F'_i};  \quad \pi\pi'_{\overline F_i}= \pi_{ F_i} \pi'_{ F'_i}; \quad II'_{\overline F_i} =I_{ F_i}I'_{ F'_i}\,.$$
These equalities follow from orthogonality of  $\overline F_i= F_i \oplus F'_i$. 
\end{proof}

\section{Antipode}\label{sec:antipode}
\subsection{Takeuchi's formula} The antipode $S\colon \YY\to\YY$ exists and is unique. But its construction from \cite{Takeuchi} is certainly not cancelation free. It is always an interesting question to give an explicit cancelation free formula for the antipode. Takeuchi's formula \cite{Takeuchi} gives that for $\psi\in k[Y_n]$,
 \begin{equation}\label{TakeuchisFormula}
 S(\psi)=\sum_{\alpha\models n} (-1)^{\ell(\alpha)} m_\alpha\circ \Delta_\alpha(\psi),
 \end{equation}
where the sum is over all compositions $\alpha=(\alpha_1,\ldots,\alpha_\ell)$ of positive integers summing to $n$. Here $\ell(\alpha)$ is the number of parts of $\alpha$,
  $$m_\alpha\colon\ k[Y_{\alpha_1}]\otimes \cdots \otimes k[Y_{\alpha_\ell}]\longrightarrow k[Y_n]$$
  is the (composite) multiplication, and
  $$\Delta_\alpha=(\pi_{\alpha_1}\otimes \cdots \otimes \pi_{\alpha_\ell})\circ\Delta^{(\ell)}\colon  k[Y_n]\longrightarrow k[Y_{\alpha_1}]\otimes \cdots \otimes k[Y_{\alpha_\ell}]$$
  is the (co-composite) comultiplication defined with the projections $\pi_m\colon \YY\to k[Y_m]$.
  Remark that even if $k[Y_n]$ is infinite dimensional, the maps $m_\alpha\circ \Delta_\alpha$ restrict to the finite dimensional subspaces of $k[Y_n^{m,\ell}]$. Hence the antipode is well defined in equation~\eqref{TakeuchisFormula}.
In Example~\ref{ex:A2_comultiplication}, Takeuchi's formula gives the formula in Figure~\ref{fig:A2_antipode}. Note that in this particular case there are no cancelations. However, Takeuchi's formula has in general a lot of cancelations. In this section, we deduce from Takeuchi's formula an expression that is cancelation free. The coefficients in our formula count the number of acyclic orientations of certain graphs.

\begin{figure}[htbp]
\begin{center}
\includegraphics[width=0.98\textwidth]{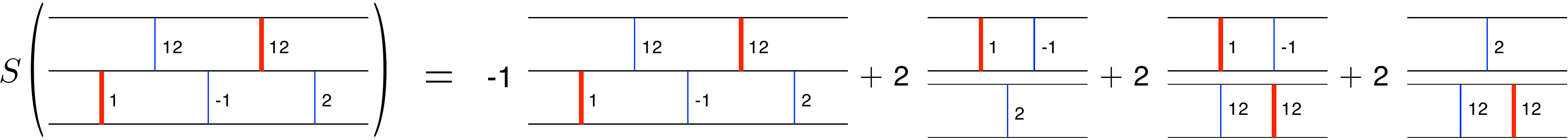}
\caption{Example of the antipode from Takeuchi's formula.}
\label{fig:A2_antipode}
\end{center}
\end{figure}

\noindent
Given $\psi= (W,Q,\pi,I)$ and $\alpha= (\alpha_1,\ldots,\alpha_\ell)$, we want to describe $m_\alpha\circ \Delta_\alpha(\psi)$. For this we need  to consider $\ell$-flat-decompositions ${\mathcal F}=(F_1,F_2,\ldots,F_\ell)$ of $\Roots{I}$ such that the dimension of the space~$U_i$ spanned by $F_i$ is equal to $\alpha_i$. Then 
  $$m_\alpha\circ \Delta_\alpha(\psi)= \sum_{{\mathcal F}=(F_1,F_2,\ldots,F_\ell) \atop \dim(U_i)=\alpha_i} (W_{\mathcal F},Q_{\mathcal F},\pi_{\mathcal F},I_{\mathcal F})\,,$$
where
  $$ W_{\mathcal F}=W_{F_1}\times \cdots \times W_{F_\ell}\,;\qquad Q_{\mathcal F}=Q_{F_1}\cdots Q_{F_\ell}\,;$$
  $$ \pi_{\mathcal F}=\pi_{F_1}\cdots \pi_{F_\ell}\,;\qquad \qquad I_{\mathcal F}=I_{F_1}\cdots I_{F_\ell}\,.$$
Let $\mathcal FD$ be the set of $\ell$-flat-decompositions of $\Roots{I}$  for $1\le \ell\le n$. For ${\mathcal F}=(F_1,F_2,\ldots,F_\ell)\in {\mathcal FD}$
we denote by $\ell({\mathcal F})$ the length $\ell$ of ${\mathcal F}$.
Then Takeuchi's formula~\eqref{TakeuchisFormula} can be written as
 \begin{equation}\label{TakeuchisFormula2}
 S(\psi)=\sum_{{\mathcal F}\in {\mathcal FD}} (-1)^{\ell({\mathcal F})}  (W_{\mathcal F},Q_{\mathcal F},\pi_{\mathcal F},I_{\mathcal F})\,.
 \end{equation}

To resolve the cancelations in \eqref{TakeuchisFormula2}, we will construct a sign reversing involution on the set ${\mathcal FD}$ where the sign of ${\mathcal F}\in {\mathcal FD}$
is given by $(-1)^{\ell({\mathcal F})}$.  We want our involution $\Phi\colon{\mathcal FD}\to {\mathcal FD}$ to map $\Phi({\mathcal F})={\mathcal F}'$ in such a way that
\begin{enumerate}
\item[(a)] ${\mathcal F} = {\mathcal F}'$, or
\item[(b)] ${\mathcal F} \ne {\mathcal F}'$ and $(W_{\mathcal F},Q_{\mathcal F},\pi_{\mathcal F},I_{\mathcal F})\cong (W_{\mathcal F'},Q_{\mathcal F'},\pi_{\mathcal F'},I_{\mathcal F'})$
and $\ell({\mathcal F}) = \ell({\mathcal F}')\pm 1$.
\end{enumerate}
To state our result and describe the desired involution, we need to introduce some objects associated to every ${\mathcal F}\in {\mathcal FD}$.

\subsection{Combinatorics of ${\mathcal FD}$.} \label{sec:comb}
We now define some relations on ${\mathcal FD}$ and will use that to define a directed graph associated to any ${\mathcal F}\in {\mathcal FD}$.
In the following we say that ${\mathcal F}\cong{\mathcal F'}$ if  and only if $(W_{\mathcal F},Q_{\mathcal F},\pi_{\mathcal F},I_{\mathcal F})\cong (W_{\mathcal F'},Q_{\mathcal F'},\pi_{\mathcal F'},I_{\mathcal F'})$.

To start, we say that ${\mathcal F}\le{\mathcal F'}$
if ${\mathcal F}\cong{\mathcal F'}$ and ${\mathcal F'}$ refines the parts of ${\mathcal F}$. That is for ${\mathcal F}=(F_1,F_2,\ldots,F_\ell)$ and ${\mathcal F'}=(F'_1,F'_2,\ldots,F'_m) $ we have $m\ge \ell$ and there exists $1\le i_1<i_2<\ldots <i_{\ell}=m$ such that
  $$ F_1=F'_1\cup\cdots\cup F'_{i_1}; \qquad F_2=F'_{i_1+1}\cup\cdots\cup F'_{i_2}; \quad\cdots\quad F_\ell=F'_{i_{ell-1}+1}\cup\cdots\cup F'_{m}.
$$
Given ${\mathcal F}\in {\mathcal FD}$, all the maximal refinement ${\mathcal F'}\ge{\mathcal F}$ will have the same length. 
We now state a few elementary lemmas. 

\begin{lemma} \label{lemma:Fsplit}
For $(F_1,F_2,\ldots,F_\ell)\le(F'_1,F'_2,\ldots,F'_m)$ in ${\mathcal FD}$, if $F'_i\cup F'_j\subseteq F_k$ then $F'_i$ and $F'_j$ are orthogonal in $\Roots{I}$.
\end{lemma}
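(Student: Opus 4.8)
The plan is to unwind the definitions and reduce everything to a statement about the flat $F_k$ being a reducible/decomposable sublist whose pieces correspond to $F'_i$ and $F'_j$. The key observation driving the whole argument is that $(F_1,\ldots,F_\ell) \le (F'_1,\ldots,F'_m)$ means both tuples are $\ell$- resp. $m$-flat-decompositions of $\Roots{I}$ that are isomorphic as tuples, and that the second refines the first. In particular, since each of these is a genuine flat-decomposition, the ambient space $V$ splits as an orthogonal direct sum $V = V_{F_1} \oplus \cdots \oplus V_{F_\ell}$ and likewise $V = V_{F'_1}\oplus\cdots\oplus V_{F'_m}$, where $V_{F}$ denotes the span of the roots in a flat $F$.

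\emph{First I would} make precise what ``refines'' forces at the level of the spanned subspaces. If $F'_i \cup F'_j \subseteq F_k$, then $V_{F'_i} + V_{F'_j} \subseteq V_{F_k}$. The content of the lemma is that $V_{F'_i} \perp V_{F'_j}$. \emph{The main idea} is that the refinement relation ${\mathcal F}\le{\mathcal F'}$ carries an isomorphism ${\mathcal F}\cong{\mathcal F'}$, and the condition that ${\mathcal F'}$ is a flat-decomposition already guarantees that \emph{all} the pieces $V_{F'_1},\ldots,V_{F'_m}$ are pairwise in direct sum and in fact pairwise orthogonal — this is exactly the requirement $V = V_{F'_1}\oplus\cdots\oplus V_{F'_m}$ built into the definition of a $k$-flat-decomposition, combined with the fact (used repeatedly in the Hopf-structure proof via the $W\times W'$ augmented group) that the summands coming from distinct irreducible factors are mutually orthogonal. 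So $F'_i$ and $F'_j$, being two distinct parts of the same flat-decomposition ${\mathcal F'}$, are automatically orthogonal in $\Roots{I}$, and the hypothesis $F'_i\cup F'_j\subseteq F_k$ is not even needed for the orthogonality itself — it only records that these two parts merge into the single part $F_k$ of the coarser decomposition.

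\emph{Concretely I would} argue: since ${\mathcal F'}=(F'_1,\ldots,F'_m)$ is a flat-decomposition of $\Roots{I}$, the definition gives the internal direct sum $V = \bigoplus_t V_{F'_t}$, and because each $F'_t$ is an irreducible flat the corresponding reflection subgroups generate $W_{{\mathcal F'}} = \prod_t W_{F'_t}$ as a direct product, which — as exploited in the proof that $\Delta$ is an algebra morphism — means the root subsystems $\Phi_{F'_t}$ sit in pairwise orthogonal subspaces $V_{F'_t}$. Specializing to $t=i$ and $t=j$ yields $V_{F'_i}\perp V_{F'_j}$, i.e.\ $F'_i$ and $F'_j$ are orthogonal in $\Roots{I}$, which is the claim.

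\emph{The step I expect to be the main obstacle} is pinning down rigorously why the summands of a flat-decomposition are pairwise \emph{orthogonal} and not merely in direct sum, since the definition as literally stated only asserts the internal direct sum $V = V_{F_1}\oplus\cdots\oplus V_{F_k}$. Here I would have to invoke the structure of the root subsystems $\Phi_{F_i}$ cut out by the flats together with irreducibility: an irreducible flat corresponds to an irreducible factor of a reflection group, and the standard fact that distinct irreducible factors of a Coxeter/reflection group act on mutually orthogonal subspaces (the same fact underpinning the augmented-group construction $\overline W = W\times W'$ in Definition~\ref{def:multi}) upgrades the direct sum to an orthogonal one. Once that orthogonality of the ambient decomposition is in hand, the lemma is immediate, so the only real work is making this orthogonality explicit rather than assumed.
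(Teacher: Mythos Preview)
Your approach has a genuine gap: the central claim that \emph{any} two distinct parts $F'_i,F'_j$ of a flat-decomposition $\mathcal F'$ are automatically orthogonal is false. The definition of a $k$-flat-decomposition only asks for an internal direct sum $V=V_{F'_1}\oplus\cdots\oplus V_{F'_m}$, not an orthogonal one, and in general these summands are \emph{not} orthogonal. Indeed the whole point of the graph $\overline G(\mathcal F)$ built later in Section~\ref{sec:comb} is to record exactly which pairs of parts of a maximal flat-decomposition fail to be orthogonal; Figure~\ref{fig:A3k2_cycle} shows a 3-flat-decomposition whose associated graph is a triangle, so all three pairs are non-orthogonal. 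A concrete rank-2 instance: in type $A_2$ one can take two one-dimensional flats spanned by $\alpha_1$ and by $\alpha_1+\alpha_2$ respectively; these give a 2-flat-decomposition of $V$ but are not orthogonal. Your attempted justification via ``distinct irreducible factors of a Coxeter group act on orthogonal subspaces'' does not apply, because the $W_{F'_t}$ are reflection subgroups of $W$ sitting in complementary but generally skew subspaces, not the irreducible components of a single Coxeter group.

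What the paper actually uses is precisely the hypothesis you discarded. The relation $\mathcal F\le\mathcal F'$ includes the isomorphism $\mathcal F\cong\mathcal F'$, i.e.\ $(W_{\mathcal F},Q_{\mathcal F},\pi_{\mathcal F},I_{\mathcal F})\cong(W_{\mathcal F'},Q_{\mathcal F'},\pi_{\mathcal F'},I_{\mathcal F'})$. Since $W_{\mathcal F'}=\prod_t W_{F'_t}$ is a direct product and the isomorphism sends generators to generators, the factor $W_{F_k}$ of $W_{\mathcal F}$ must itself decompose as $W_{F_k}\cong W_{F'_i}\times W_{F'_j}\times\cdots$. It is \emph{this} product decomposition of the single group $W_{F_k}$ that forces the corresponding root subsystems, hence $F'_i$ and $F'_j$, to be orthogonal. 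So the hypothesis $F'_i\cup F'_j\subseteq F_k$ together with the isomorphism built into $\le$ is doing all the work; without it there is nothing to prove and the conclusion is simply false.
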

\begin{proof}
Since $(F_1,F_2,\ldots,F_\ell)\le(F'_1,F'_2,\ldots,F'_m)$ implies that $(F_1,F_2,\ldots,F_\ell)\cong(F'_1,F'_2,\ldots,F'_m)$, we have that
 $W_{F_k} \cong W_{F'_i}\times W_{F'_j}\times \cdots$. In particular $F'_i$ and $F'_j$ are orthogonal in $\Roots{I}$.
\end{proof}

\begin{lemma} \label{lemma:lengthFmax}
For any ${\mathcal F},{\mathcal F}'\in {\mathcal FD}$ maximal refinement, if ${\mathcal F}\cong{\mathcal F'}$, then $\ell({\mathcal F})=\ell({\mathcal F'})$.
\end{lemma}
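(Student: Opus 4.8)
The plan is to reduce the statement to the uniqueness of the decomposition of a Coxeter system into irreducible components. The first step I would take is to give an intrinsic characterization of the maximal elements of the refinement order $\le$: a flat-decomposition ${\mathcal F}=(F_1,\dots,F_\ell)\in{\mathcal FD}$ is a maximal refinement if and only if each root subsystem $\Phi_{F_i}$ is irreducible, equivalently each $W_{F_i}$ has a connected Coxeter graph. Once this is established, counting connected components finishes the argument immediately.

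To prove the characterization, suppose some $\Phi_{F_k}$ is reducible, and write $V_{F_k}=U_1\oplus U_2$ for the orthogonal splitting into the spans of two nonempty unions of irreducible components of $\Phi_{F_k}$. Every root of $\Roots{I_{F_k}}$ lies in $\Phi_{F_k}$, hence in $U_1$ or in $U_2$; since these roots span $V_{F_k}$, the sublists $U_1\cap F_k$ and $U_2\cap F_k$ are irreducible flats spanning $U_1$ and $U_2$ respectively. Replacing $F_k$ by this orthogonal pair produces a strict refinement ${\mathcal F}'>{\mathcal F}$: by Lemma~\ref{lemma:Fsplit} the relation $\le$ is exactly the operation of splitting a part into orthogonal flats, and the isomorphism clause ${\mathcal F}\cong{\mathcal F}'$ holds automatically, because $Q_{F_k}$ is then a shuffle of two words whose letters pairwise commute and is therefore equal, up to commutation of consecutive commuting letters, to their concatenation. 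Conversely, if every $\Phi_{F_i}$ is irreducible then no part admits a nontrivial orthogonal splitting (an orthogonal split of $F_i$ would force a decomposition of $\Phi_{F_i}$), so ${\mathcal F}$ is maximal.

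With the characterization in hand, for a maximal refinement ${\mathcal F}$ the length $\ell({\mathcal F})$ equals the number of parts, which is exactly the number of connected components of the Coxeter graph of $W_{\mathcal F}=W_{F_1}\times\cdots\times W_{F_\ell}$, since that graph is the disjoint union of the connected graphs of the irreducible factors $W_{F_i}$. Finally, ${\mathcal F}\cong{\mathcal F}'$ means, by the definition of $\cong$, that there is a generator-preserving isomorphism $W_{\mathcal F}\to W_{\mathcal F'}$; such an isomorphism is precisely an isomorphism of the underlying Coxeter graphs and so preserves the number of connected components. Therefore $\ell({\mathcal F})=\ell({\mathcal F}')$.

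The step I expect to require the most care is the characterization of maximal refinements, specifically verifying that splitting a reducible part yields a legitimate element of ${\mathcal FD}$ lying strictly above ${\mathcal F}$ in the order $\le$. One must check both that each piece $U_j\cap F_k$ is an irreducible flat, which follows from Corollary~\ref{cor:reducible} together with the observation that roots of $\Phi_{F_k}$ cannot mix across the orthogonal splitting, and that the isomorphism clause built into the definition of $\le$ is genuinely satisfied via the commuting-shuffle remark above. Everything else rests on the standard fact that the decomposition of a Coxeter system into irreducible factors, read off from the connected components of its Coxeter graph, is an invariant of generator-preserving isomorphism.
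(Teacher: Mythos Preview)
Your argument is correct. The paper's own proof is a two-line sketch: if $\ell({\mathcal F})>\ell({\mathcal F}')$ then, via the isomorphism $W_{\mathcal F}\cong W_{\mathcal F'}$, some factor $W_{F'_k}$ of $W_{\mathcal F'}$ must decompose further, so ${\mathcal F}'$ admits a strict refinement, contradicting maximality. Your proof is not a different route so much as an honest unpacking of that sketch: you make explicit the characterization the paper leaves implicit, namely that maximality is equivalent to each $W_{F_i}$ being irreducible, and then read off $\ell({\mathcal F})$ as the number of connected components of the Coxeter graph of $W_{\mathcal F}$, which is manifestly preserved by a generator-preserving isomorphism. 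What your version buys is that the step the paper glosses over (``a component of ${\mathcal F}'$ can be refined'') is actually justified---you verify that an orthogonal splitting of a reducible $\Phi_{F_k}$ really produces irreducible flats of $\Root{I}{Q}$ and that the resulting tuple sits strictly above ${\mathcal F}$ in the order $\le$, including the $\cong$ clause via the commuting-shuffle observation. Two minor remarks: the reference to Corollary~\ref{cor:reducible} is not really needed for showing $U_j\cap F_k$ is an irreducible flat (the direct argument you sketch, that $\Roots{I_{F_k}}\subset\Phi_{F_k}\subset U_1\cup U_2$ splits the spanning roots, suffices); and Lemma~\ref{lemma:Fsplit} gives only one implication, but you correctly supply the converse yourself.
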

\begin{proof}
Suppose that $\ell({\mathcal F})>\ell({\mathcal F'})$, that would implies that a component of ${\mathcal F'}$ can be refine and this contradict the maximality of ${\mathcal F'}$.
Similarly the maximality of ${\mathcal F}$ implies  $\ell({\mathcal F})\not<\ell({\mathcal F'})$.
\end{proof}

\begin{lemma} \label{lemma:Fpermuted}
For any ${\mathcal F}\in {\mathcal FD}$ with $\ell({\mathcal F})=\ell$ and for any permutation $\sigma\colon\{1,\ldots,\ell\}\to\{1,\ldots,\ell\}$ we have
  $$\sigma{\mathcal F}=(F_{\sigma(1)},F_{\sigma(2)},\ldots,F_{\sigma(\ell)})\in {\mathcal FD} \qquad\text{and}\qquad \sigma{\mathcal F}\cong{\mathcal F} \,.$$
\end{lemma}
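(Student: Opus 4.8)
The plan is to separate the claim into its two assertions and dispatch each by unwinding the relevant definitions. First I would observe that membership in ${\mathcal FD}$ is a symmetric condition on the unordered collection of flats: a tuple lies in ${\mathcal FD}$ precisely when each of its entries is an irreducible flat of $\Root{I}{Q}$ and the spans of the entries decompose $V$ as an internal direct sum. Irreducibility is a property of each flat individually, and the condition $V=V_{F_1}\oplus\cdots\oplus V_{F_\ell}$ is insensitive to the order of the summands. Hence $\sigma{\mathcal F}=(F_{\sigma(1)},\ldots,F_{\sigma(\ell)})$ satisfies exactly the same requirements, so $\sigma{\mathcal F}\in{\mathcal FD}$ is immediate.

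For the isomorphism $\sigma{\mathcal F}\cong{\mathcal F}$, I would exhibit the canonical isomorphism of augmented Coxeter groups that permutes the direct factors. Recall that $W_{\mathcal F}=W_{F_1}\times\cdots\times W_{F_\ell}$ is the augmented Coxeter group in which the generators arising from distinct factors $W_{F_i}$ and $W_{F_j}$ are declared to commute. The map $\phi\colon W_{\mathcal F}\to W_{\sigma{\mathcal F}}$ sending the block of generators of $W_{F_i}$ identically to the corresponding block inside $W_{\sigma{\mathcal F}}$ is an isomorphism carrying generators to generators, and $\phi(\pi_{\mathcal F})=\pi_{\sigma{\mathcal F}}$ because the factors $\pi_{F_i}$ commute pairwise in the direct product.

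The crux of the argument, and the only place requiring care, is verifying the word condition $\phi(Q_{\mathcal F})=Q_{\sigma{\mathcal F}}$ up to commutation of consecutive commuting letters, together with the matching of facets $I_{\mathcal F}$ and $I_{\sigma{\mathcal F}}$. Here $Q_{\mathcal F}=Q_{F_1}\cdots Q_{F_\ell}$ is the concatenation of the blocks $Q_{F_i}$, and reordering it into $Q_{\sigma{\mathcal F}}=Q_{F_{\sigma(1)}}\cdots Q_{F_{\sigma(\ell)}}$ amounts to transposing adjacent blocks. Since every letter of $Q_{F_i}$ commutes with every letter of $Q_{F_j}$ for $i\neq j$ in the augmented Coxeter graph, each such block transposition is realized by a sequence of commutations of consecutive commuting letters, which is exactly the move permitted by the equivalence relation on tuples. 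The positions of $I_{\mathcal F}=I_{F_1}\cdots I_{F_\ell}$ travel with their blocks under these commutations and thereby land on $I_{\sigma{\mathcal F}}$, so all four components match under $\phi$ and $\sigma{\mathcal F}\cong{\mathcal F}$ follows.

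I expect no substantial obstacle: the statement is essentially a bookkeeping consequence of the augmented Coxeter structure, and the one point to handle cleanly is keeping the facet positions consistent through the chain of commutations, which is guaranteed by the cross-block commutativity. It suffices to treat the case of a single adjacent transposition $\sigma=(k\;k{+}1)$ and then note that arbitrary $\sigma$ factors into such transpositions, so the argument compounds without extra work.
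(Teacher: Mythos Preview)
Your proposal is correct and follows the same approach as the paper; in fact, the paper's own proof is the single sentence ``It is clear that permuting the entries of a $\ell$-flat decomposition is also a $\ell$-flat decomposition that is isomorphic,'' so you have simply unpacked what the authors treated as evident. Your handling of the word condition via block transpositions and your tracking of the facet positions through the commutations are exactly the details one would supply to justify that one-liner.
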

\begin{proof}
It is clear that permuting the entries of a $\ell$-flat decomposition is also a $\ell$-flat decomposition that is isomorphic.
\end{proof}

Let $S_\ell$ denote the permutation group of permutations  $\sigma\colon\{1,\ldots,\ell\}\to\{1,\ldots,\ell\}$. Given ${\mathcal F}\in {\mathcal FD}$, let
  $$\Psi({\mathcal F}) = \big\{ \sigma{\mathcal F'} : {\mathcal F'}\ge {\mathcal F} \text{ maximal refinement, } \ell=\ell({\mathcal F'}), \  \sigma\in S_\ell\big\}\subset {\mathcal FD}.$$
It is clear that for ${\mathcal F},{\mathcal K}\in {\mathcal FD}$ if $\Psi({\mathcal F})=\Psi({\mathcal K})$ then ${\mathcal F}\cong {\mathcal K}$. We need to pick consistently,
once and for all, an element $\Psi^0({\mathcal F})\in\Psi({\mathcal F})$ in each $\Psi({\mathcal F})$ in such a way that 
    $$\Psi({\mathcal F})=\Psi({\mathcal K})\quad\iff \quad\Psi^0({\mathcal F})=\Psi^0({\mathcal K})$$
for all ${\mathcal F},{\mathcal K}\in {\mathcal FD}$.    It is clear we can make such choices.

Our next task is to construct an oriented graph $G({\mathcal F})$ associated to every ${\mathcal F}\in {\mathcal FD}$. 
The orientation of the edges of $G({\mathcal F})$ depends on the choice of $\Psi^0({\mathcal F})$. This is why it is important to fix these choices consistently as above.
Let $\Psi^0({\mathcal F})=(F^0_1,F^0_2,\ldots,F^0_\ell)$ and ${\mathcal F}=(F_1,F_2,\ldots,F_m)$. 
Since $\Psi^0({\mathcal F})$ is the permutation of a maximal refinement of ${\mathcal F}$, there is a well defined map
$f_{\mathcal F}\colon\{1,\ldots,\ell\}\to\{1,\ldots,m\}$ such that $F^0_{i}\subseteq F_{f_{\mathcal F}(i)}$.
The graph $G({\mathcal F})=(V,E)$ is a graph on the vertices $V=\{1,2,\ldots,\ell\}$ where we have a directed edge 
 $$(i,j)\in E \quad\iff\quad\text{$F^0_i,F^0_j$ are {\bf not} othogonal and $f_{\mathcal F}(i)<f_{\mathcal F}(j)$.}$$
We now remark that Lemma~\ref{lemma:Fsplit} implies that if $F^0_i,F^0_j$ are not othogonal, then $f_{\mathcal F}(i)\ne f_{\mathcal F}(j)$. Hence there 
is an edge in $G({\mathcal F})$ whenever $F^0_i,F^0_j$ are not othogonal and the orientation of the edge is determined by the choice of $\Psi^0({\mathcal F})$.
We denote by $\overline{G}({\mathcal F})=(V,\overline{E})$ the simple graph obtained from $G({\mathcal F})=(V,E)$ by forgetting the orientation of the edges in $E$.
The graph $G({\mathcal F})$ is an {\sl acyclic orientation} of $\overline{G}({\mathcal F})$.

\begin{lemma} \label{lemma:graph}
If ${\mathcal F}\le{\mathcal F}'$, then $G({\mathcal F})=G({\mathcal F'})$.
\end{lemma}
\begin{proof}
Lemma~\ref{lemma:Fsplit} guaranties that $\overline{G}({\mathcal F})=\overline{G}({\mathcal F'})$. Since the refinement relation only groups consecutive parts of ${\mathcal F'}$ that are orthogonal (no edges), then
we have that the orientations is the same on both $G({\mathcal F})$, and $G({\mathcal F'})$.
\end{proof}

For any ${\mathcal F}\in {\mathcal FD}$ and  any acyclic orientation $G=(V,E)$ of $\overline{G}\big(\Psi^0({\mathcal F})\big)$, let $\ell=\ell\big(\Psi^0({\mathcal F})\big)$. We construct a unique permutation $\sigma_{\mathcal F}\in S_\ell$ such that $G\big(\sigma_{\mathcal F}\Psi^0({\mathcal F})\big)=G$. The desired permutation $\sigma_{\mathcal F}$ is constructed recursively as follows:
Let $V_1=V=\{1,\ldots,\ell\}$. The value $\sigma_{\mathcal F}(1)$ is the largest value of $V_1$ that is a source in $G$ restricted to $V_1$.
For $1<i\le\ell$, we let $V_i=V_{i-1}\setminus\{\sigma_{\mathcal F}(i-1)\}$ and we let $\sigma_{\mathcal F}(i)$ be the largest  value of $V_i$ that is a source in $G$ restricted to $V_i$.

\begin{remark} Let us summarize the objects associated to any ${\mathcal F}=(F_1,F_2,\ldots,F_m)\in {\mathcal FD}$.
We have picked consistently  $\Psi^0({\mathcal F})=(F^0_1,F^0_2,\ldots,F^0_\ell)$ among all the permutations of the parts of a maximal refinement of ${\mathcal F}$.
Using $\Psi^0({\mathcal F})$ we construct a well defined map $f_{\mathcal F}\colon\{1,\ldots,\ell\}\to\{1,\ldots,m\}$ which allow us to construct an oriented acyclic graph $G({\mathcal F})$.
The simple graph $\overline{G}({\mathcal F})$ contains edge between $i$ and $j$ if and only if the part $F^0_i$ and $F^0_j$ are not orthogonal.
Given any acyclic orientation $G$ of $\overline{G}({\mathcal F})$, there exists a unique permutation $\sigma_{\mathcal F}$ such that $G\big(\sigma_{\mathcal F}\Psi^0({\mathcal F})\big)=G$.
Remark that if ${\mathcal K}=\sigma\Psi^0({\mathcal F})$ for some ${\mathcal F}\in {\mathcal FD}$ and a permutation $\sigma$, then $f_{\mathcal K}=\sigma^{-1}$.
\end{remark}

\subsection{Antipode (cancelation free formula).} \label{sec:cancel free}
We now have the necessary combinatorial tools to state and prove the cancelation free formula for the antipode. The coefficients in our formula involve the number $a(\overline{G})$ of acyclic orientations of a simple graph $\overline{G}$.

\begin{figure}[t]
\begin{center}
\includegraphics[width=0.7\textwidth]{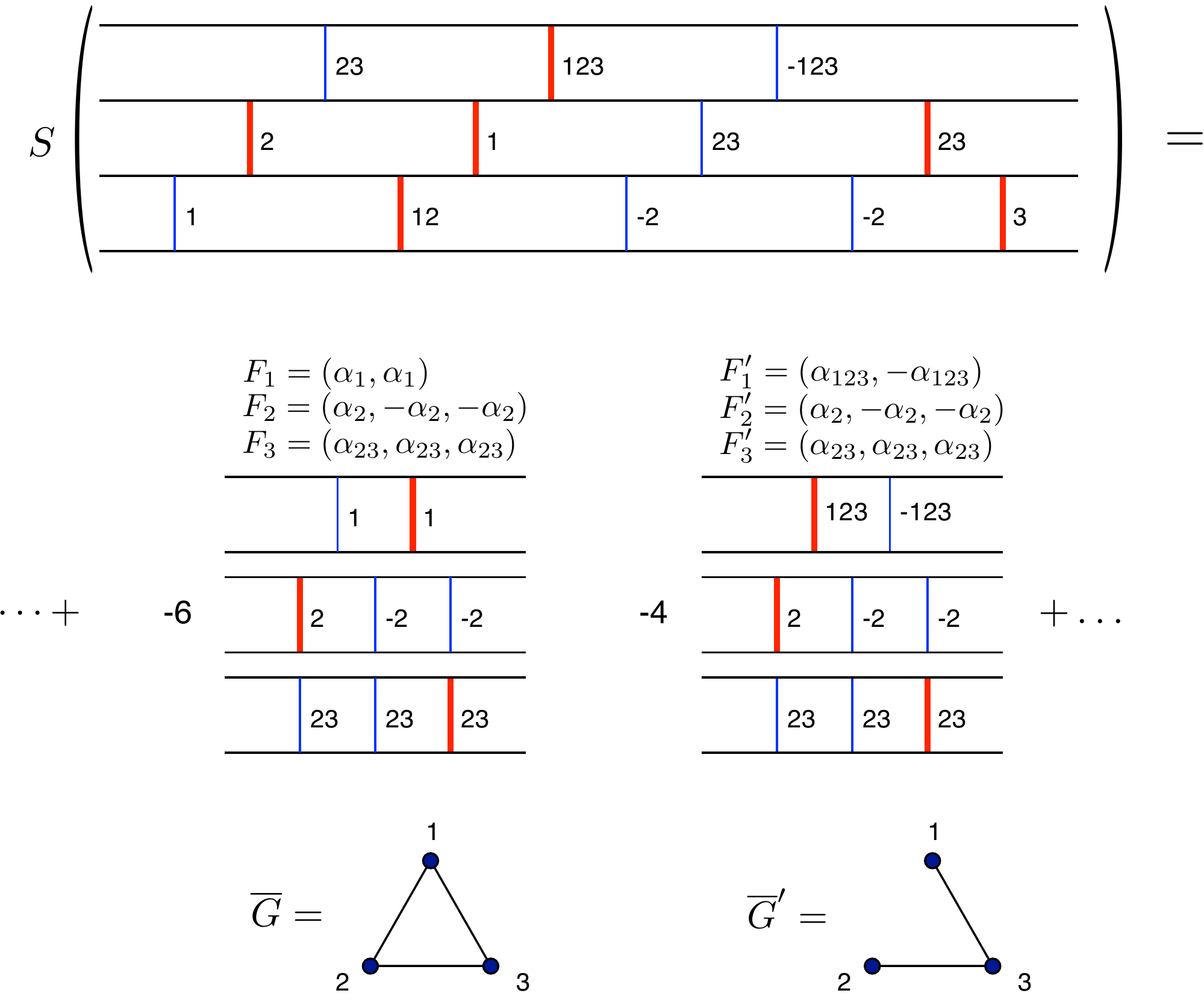}
\caption{Example of the cancellation free formula for the antipode. The number of acyclic orientations of the graph $\overline G= \overline G ((F_1,F_2,F_3))$ is 6, while the number of acyclic orientations of $\overline G'= \overline G ((F_1',F_2',F_3'))$ is 4.}
\label{fig:A3k2_cycle}
\end{center}
\end{figure}

\begin{theorem}\label{thm:antipode} For $\psi= (W,Q,\pi,I)$, using the notation as in Eq.~\eqref{TakeuchisFormula2} and Section~\ref{sec:comb}, we have
   $$S(\psi)=\sum_{{\mathcal K}\in \Psi^0({\mathcal FD})} (-1)^{\ell({\mathcal K})} a\big(\overline{G}({\mathcal K})\big)\cdot (W_{\mathcal K},Q_{\mathcal K},\pi_{\mathcal K},I_{\mathcal K})\,.$$
\end{theorem}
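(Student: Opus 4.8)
The plan is to start from Takeuchi's formula in the form \eqref{TakeuchisFormula2}, namely $S(\psi)=\sum_{\mathcal F\in\mathcal FD}(-1)^{\ell(\mathcal F)}(W_{\mathcal F},Q_{\mathcal F},\pi_{\mathcal F},I_{\mathcal F})$, and to organize the sum according to the equivalence classes encoded by $\Psi^0$. First I would observe that, by Lemma~\ref{lemma:Fpermuted}, any term indexed by $\mathcal F$ is isomorphic as a tuple to $\Psi^0(\mathcal F)$, and that the isomorphism class of $(W_{\mathcal F},Q_{\mathcal F},\pi_{\mathcal F},I_{\mathcal F})$ depends only on $\Psi^0(\mathcal F)$. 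So I would rewrite the total sum as a sum over the distinguished representatives $\mathcal K\in\Psi^0(\mathcal FD)$, with the tuple $(W_{\mathcal K},Q_{\mathcal K},\pi_{\mathcal K},I_{\mathcal K})$ factored out, and an inner coefficient
\[
c(\mathcal K)=\sum_{\substack{\mathcal F\in\mathcal FD\\ \Psi^0(\mathcal F)=\mathcal K}}(-1)^{\ell(\mathcal F)}.
\]
The theorem then reduces to the purely combinatorial identity $c(\mathcal K)=(-1)^{\ell(\mathcal K)}a(\overline G(\mathcal K))$.

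To prove that identity I would construct the announced sign-reversing involution on the fibre $\{\mathcal F:\Psi^0(\mathcal F)=\mathcal K\}$, matching the promise in (a)--(b) after Eq.~\eqref{TakeuchisFormula2}. The key structural fact is Lemma~\ref{lemma:graph}: whenever $\mathcal F\le\mathcal F'$ one has $G(\mathcal F)=G(\mathcal F')$, so the oriented graph $G(\mathcal F)$ is constant along each refinement chain and is genuinely an invariant of the fibre's orientation data. Thus each $\mathcal F$ in the fibre determines an acyclic orientation $G$ of $\overline G(\mathcal K)$, and conversely, by the recursive construction of $\sigma_{\mathcal F}$ preceding the statement, each acyclic orientation $G$ of $\overline G(\mathcal K)$ is realized by a unique canonical maximal refinement $\sigma_{\mathcal F}\Psi^0(\mathcal K)$. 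The involution I would use groups consecutive orthogonal parts: given $\mathcal F=(F_1,\dots,F_m)$, scan for the first adjacent pair $F_i,F_{i+1}$ that are orthogonal (no edge in $G(\mathcal F)$ between the corresponding source-vertices), and either merge them or, if $\mathcal F$ is already a merge at that spot, split them—this is exactly the involution of Benedetti--Sagan referenced in the acknowledgements. A fixed point is a configuration admitting no admissible adjacent orthogonal merge/split, and these fixed points biject with the acyclic orientations of $\overline G(\mathcal K)$, each carrying sign $(-1)^{\ell(\mathcal K)}$ since maximal refinements all share one length $\ell(\mathcal K)=\ell(\Psi^0(\mathcal K))$ by Lemma~\ref{lemma:lengthFmax}.

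Summing the involution's cancellations then collapses $c(\mathcal K)$ to a signed count of fixed points, yielding $c(\mathcal K)=(-1)^{\ell(\mathcal K)}a(\overline G(\mathcal K))$ and hence the claimed formula. I expect the main obstacle to be verifying that the merge/split involution is well defined and genuinely sign-reversing within a single fibre of $\Psi^0$: one must check that merging two orthogonal adjacent parts, or splitting a part back into the canonically ordered pieces dictated by $\Psi^0(\mathcal K)$, stays inside the same $\Psi^0$-fibre and does not accidentally escape $\mathcal FD$ (so that $V_{F_1}\oplus\cdots$ remains a direct-sum decomposition and each part stays an irreducible flat). The orthogonality in Lemma~\ref{lemma:Fsplit} and the fact that $W_{F_k}\cong W_{F'_i}\times W_{F'_j}\times\cdots$ are precisely what guarantee the merged tuple is isomorphic to $\mathcal K$, so the delicate point is bookkeeping the consistency of the chosen representatives $\Psi^0$ under the involution rather than any deep new idea. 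Once that bookkeeping is pinned down, reconciling the canonical permutation $\sigma_{\mathcal F}$ with the ``largest source'' recursion to confirm the bijection between fixed points and acyclic orientations is routine.
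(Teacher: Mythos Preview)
Your high-level architecture is exactly the paper's: partition Takeuchi's sum \eqref{TakeuchisFormula2} into fibres $\mathcal{FD}_{\mathcal K}=\{\mathcal F:\Psi^0(\mathcal F)=\mathcal K\}$, factor out the common tuple $(W_{\mathcal K},Q_{\mathcal K},\pi_{\mathcal K},I_{\mathcal K})$, and prove $\sum_{\mathcal F\in\mathcal{FD}_{\mathcal K}}(-1)^{\ell(\mathcal F)}=(-1)^{\ell(\mathcal K)}a(\overline G(\mathcal K))$ by a sign-reversing involution. That reduction is correct, and your invocation of Lemmas~\ref{lemma:Fsplit}, \ref{lemma:lengthFmax}, \ref{lemma:Fpermuted}, \ref{lemma:graph} is appropriate.

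The gap is in the involution itself. Your rule ``scan for the first adjacent pair $F_i,F_{i+1}$ that are orthogonal and merge, or split if already merged'' is not well defined and does not have the right fixed points. First, splitting is ambiguous: if $F_i=K_a\cup K_b$ with $K_a,K_b$ orthogonal atoms, nothing in your rule says whether to produce $(K_a,K_b)$ or $(K_b,K_a)$, and both of those permutations will merge back to the same $F_i$, so the map is not an involution. Second, even granting some splitting convention, your fixed points would be maximal refinements with no adjacent orthogonal pair, i.e.\ linear orders of the atoms in which consecutive atoms are joined by an edge of $\overline G(\mathcal K)$. Those are Hamiltonian paths in $\overline G(\mathcal K)$, not acyclic orientations. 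A three-atom example with $K_1\perp K_2$, $K_2\perp K_3$, $K_1\not\perp K_3$ already breaks this: $\overline G$ is a single edge with an isolated vertex, it has two acyclic orientations, but no permutation of $(K_1,K_2,K_3)$ avoids an adjacent orthogonal pair.

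The paper repairs this by inserting an extra layer: it further partitions each $\mathcal{FD}_{\mathcal K}$ according to the oriented graph $G(\mathcal F)$, and builds a separate involution $\Phi_{\mathcal K}^G$ on each piece $\mathcal{FD}_{\mathcal K}^G$ with a \emph{single} fixed point, namely the permutation $\sigma_{\mathcal F}\Psi^0(\mathcal K)$ coming from the largest-source recursion. The merge/split position is not the first adjacent orthogonal pair; it is located by comparing $f_{\mathcal F}$ to the canonical $\sigma_{\mathcal F}$, finding the first discrepancy, and then using a carefully defined set $U$ (built from paths in $G$ out of small sources) to decide whether to merge $F_{j-1}\cup F_j$ or to split $F_j=F_U\cup F_{U^c}$. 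That extra structure is precisely what makes the involution well defined and forces the fixed-point count to be $a(\overline G(\mathcal K))$ rather than a Hamiltonian-path count. Your closing remarks about $\sigma_{\mathcal F}$ and ``bookkeeping'' point in the right direction, but the actual rule you wrote down is not the one that works.
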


\begin{proof} In Equation~\eqref{TakeuchisFormula2}, we partition the set ${\mathcal FD}$ according to the values of $\Psi^0$. For ${\mathcal K}=(K_1,K_2,\ldots,K_\ell)\in \Psi^0({\mathcal FD})$ and
let ${\mathcal FD}_{\mathcal K} = \big\{ {\mathcal F}\in {\mathcal FD} : \Psi^0({\mathcal F})={\mathcal K} \big\}$. For any ${\mathcal F}\in{\mathcal FD}_{\mathcal K}$, we have that ${\mathcal F}\cong {\mathcal K}$. Hence Equation~\eqref{TakeuchisFormula2} can be written as
 $$ S(\psi)=\sum_{{\mathcal F}\in {\mathcal FD}} (-1)^{\ell({\mathcal F})}  (W_{\mathcal F},Q_{\mathcal F},\pi_{\mathcal F},I_{\mathcal F})=
      \sum_{{\mathcal K}\in \Psi^0({\mathcal FD})} \Big( \sum_{{\mathcal F}\in {\mathcal FD}_{\mathcal K}} (-1)^{\ell({\mathcal F})} \Big) (W_{\mathcal K},Q_{\mathcal K},\pi_{\mathcal K},I_{\mathcal K}) \,.
 $$
We now concentrate on the coefficient of $ (W_{\mathcal K},Q_{\mathcal K},\pi_{\mathcal K},I_{\mathcal K}) $ in the above sum. The theorem will follow as soon as we show that
 \begin{equation}\label{FixK}
\sum_{{\mathcal F}\in {\mathcal FD}_{\mathcal K}} (-1)^{\ell({\mathcal F})} =(-1)^{\ell({\mathcal K})} a\big(\overline{G}({\mathcal F})\big)\,.
 \end{equation}
We now partition ${\mathcal FD}_{\mathcal K}$. For all ${\mathcal F}=(F_1,F_2,\ldots,F_m)\in {\mathcal FD}_{\mathcal K}$, it is clear that $\overline{G}({\mathcal F})=\overline{G}({\mathcal K})$ since the simple graph depends only on $\Psi^0({\mathcal F})=K$. For any acyclic orientation $G$ of $\overline{G}({\mathcal K})$, we let ${\mathcal FD}_{\mathcal K}^G=\big\{  {\mathcal F}\in {\mathcal FD}_{\mathcal K} : G({\mathcal F})=G\big\}$. We have
 $$  \sum_{{\mathcal F}\in {\mathcal FD}_{\mathcal K}} (-1)^{\ell({\mathcal F})} = \sum_G \Big( \sum_{{\mathcal F}\in {\mathcal FD}_{\mathcal K}^G} (-1)^{\ell({\mathcal F})} \Big)
  $$
where the sum is over all acyclic orientations $G$ of $\overline{G}({\mathcal K})$. Equation~\eqref{FixK} follows as soon as we show
 \begin{equation}\label{FixKG}
\sum_{{\mathcal F}\in {\mathcal FD}_{\mathcal K}^G} (-1)^{\ell({\mathcal F})} =(-1)^{\ell({\mathcal K})} \,.
 \end{equation}
To prove this identity we construct an involution $\Phi_{\mathcal K}^G\colon {\mathcal FD}_{\mathcal K}^G\to {\mathcal FD}_{\mathcal K}^G$ such that
\begin{enumerate}
\item[(A)] $\Phi_{\mathcal K}^G({\mathcal F})={\mathcal F}$ if and only if $f_{\mathcal F}=\sigma_{\mathcal F}^{-1}$
\item[(B)] if $f_{\mathcal F}\ne \sigma_{\mathcal F}^{-1}$, then $\Phi_{\mathcal K}^G({\mathcal F})={\mathcal F}'\ne {\mathcal F}$ and $\ell({\mathcal F}')=\ell({\mathcal F})\pm 1$.
\end{enumerate}
When $f_{\mathcal F}=\sigma_{\mathcal F}^{-1}$, there is a unique ${\mathcal F}$ with this property and we must have $\ell({\mathcal F})=\ell({\mathcal K})=\ell$.
This is the unique fix point of $\Phi_{\mathcal K}^G$ and the sign agree with Equation~\eqref{FixKG}.

Now assume $f_{\mathcal F}\ne \sigma_{\mathcal F}^{-1}$. Find the smallest $i$ such that $f^{-1}_{\mathcal F}(i)\ne\{\sigma_{\mathcal F}(i)\}$. For $1\le r\le i-1$, we have $f^{-1}_{\mathcal F}(r)=\{\sigma_{\mathcal F}(r)\}$. As in the definition of $\sigma_{\mathcal F}$, let $V_i=\{1,2,\ldots,\ell\}\setminus \{\sigma_{\mathcal F}(1),\ldots,\sigma_{\mathcal F}(i-1)\}$.
We consider $G_i=G\big|_{V_i}$ to be the oriented subgraph of $G$ restricted to the vertices $V_i$.
All the elements in $f^{-1}_{\mathcal F}(i)$ are sources in the graph $G_i$. The value of $\sigma_{\mathcal F}(i)$ is the largest source in $G_i$. Since $f^{-1}_{\mathcal F}(i)\ne\{\sigma_{\mathcal F}(i)\}$, there must be a source in $f^{-1}_{\mathcal F}(i)$ with value strictly less than $\sigma_{\mathcal F}(i)$. Let $a=\min f^{-1}_{\mathcal F}(i)<\sigma_{\mathcal F}(i)$. We then find the smallest $j\ge i$ such that $f^{-1}_{\mathcal F}(j)$ contains a source of $G_i$ with value $b>a$. We let 
  $$U=\big\{ a'\in f^{-1}_{\mathcal F}(j) : \exists x\le a \text{ a source of } G_i   \text{ and a path from $x$ to $a'$} \big\}$$
If $U=\emptyset$, then $j>i$ since $a\not\in U$. In this case we remark that our choice of $j$ implies that all the element of $f^{-1}_{\mathcal F}(j)$ are connected to a source $x\le a$ and there is not edge in $G$ from any element of $f^{-1}_{\mathcal F}(j-1)$ to any element of $f^{-1}_{\mathcal F}(j)$. If $U=\emptyset$, then we define
\begin{equation}\label{a_merge}
 \Phi_{\mathcal K}^G({\mathcal F})={\mathcal F}'= (F_1,\ldots,F_{j-2},F_{j-1}\cup F_{j},F_{j+1},\ldots,F_m).
 \end{equation}
 Remark again that all the components of $F_{j-1}$ are orthogonal to all components of $F_j$ and thus ${\mathcal F'}<{\mathcal F}$. Moreover $\ell({\mathcal F'})=\ell({\mathcal F})-1$ and
  $$f^{-1}_{\mathcal F'}(r)= \begin{cases}
              f^{-1}_{\mathcal F}(r) &\text{ if $r<j-1$,}\cr 
              f^{-1}_{\mathcal F}(j-1)\cup f^{-1}_{\mathcal F}(j)&\text{ if $r=j-1$,}\cr 
              f^{-1}_{\mathcal F}(r+1)&\text{ if $r>j-1$.}\cr
              \end{cases}$$
It is easy to check that if we repeat the procedure above for ${\mathcal F}'$ we will obtain $i',a',j',U'$ in such a way that $i'=i$, $a'=a$, $j'=j-1$ and $U'=f^{-1}_{\mathcal F}(j-1)\ne\emptyset$.

Now we consider the case when $U\ne\emptyset$. Here we reverse the procedure just above. That is, let $U^c=f^{-1}_{\mathcal F}(j)\setminus U$. All the parts of $U$ are connected to a source of $G_i$ with value $\le a$. Let $F_U$ be the component of ${\mathcal K}$ in $F_j$ indexed by $U$, and let $F_{U^c}$ be the component of ${\mathcal K}$ in $F_j$ indexed by $U^c$. We have that $F_j=F_{U}\cup F_{U^c}$ and all elements of $F_{U}$ are orthogonal to all elements of $F_{U^c}$.
For $U\ne\emptyset$, we define
\begin{equation}\label{a_split}
 \Phi_{\mathcal K}^G({\mathcal F})={\mathcal F}'= (F_1,\ldots,F_{j-1},F_U,F_{U^c},F_{j+1},\ldots,F_m).
 \end{equation}
 Remark that now ${\mathcal F}<{\mathcal F'}$ and $\ell({\mathcal F'})=\ell({\mathcal F})+1$. Moreover
  $$f^{-1}_{\mathcal F'}(r)= \begin{cases}
              f^{-1}_{\mathcal F}(r) &\text{ if $r<j-1$,}\cr 
              U&\text{ if $r=j$,}\cr 
              U^c&\text{ if $r=j+1$,}\cr 
              f^{-1}_{\mathcal F}(r-1)&\text{ if $r>j+1$.}\cr
              \end{cases}$$
For this ${\mathcal F}'$ we will obtain $i',a',j',U'$ in such a way that $i'=i$, $a'=a$, $j'=j+1$ and $U'=\emptyset$. The map $ \Phi_{\mathcal K}^G$ is thus the desired involution.
\end{proof}

\begin{remark}\label{rem:anti}
The formula in Theorem~\ref{thm:antipode} is cancelation free. Only maximal refinement contribute to the formula and Lemma~\ref{lemma:lengthFmax} guaranties that isomorphic  refinement all have the same length. But the formula may contain the same basis element more than once. For example, the second and third terms in the antipode formula in Figure~\ref{fig:A3k2_antipode} are equivalent tuples of subword complexes and therefore represent the same basis element. In fact, the corresponding Coxeter groups are isomorphic ($A_2\times A_1$) and the words are the same up to commutation of consecutive commuting letters. The element $\pi$ and the facet is also the same in both terms. 
  
\begin{figure}[htb]
\begin{center}
\includegraphics[width=0.95\textwidth]{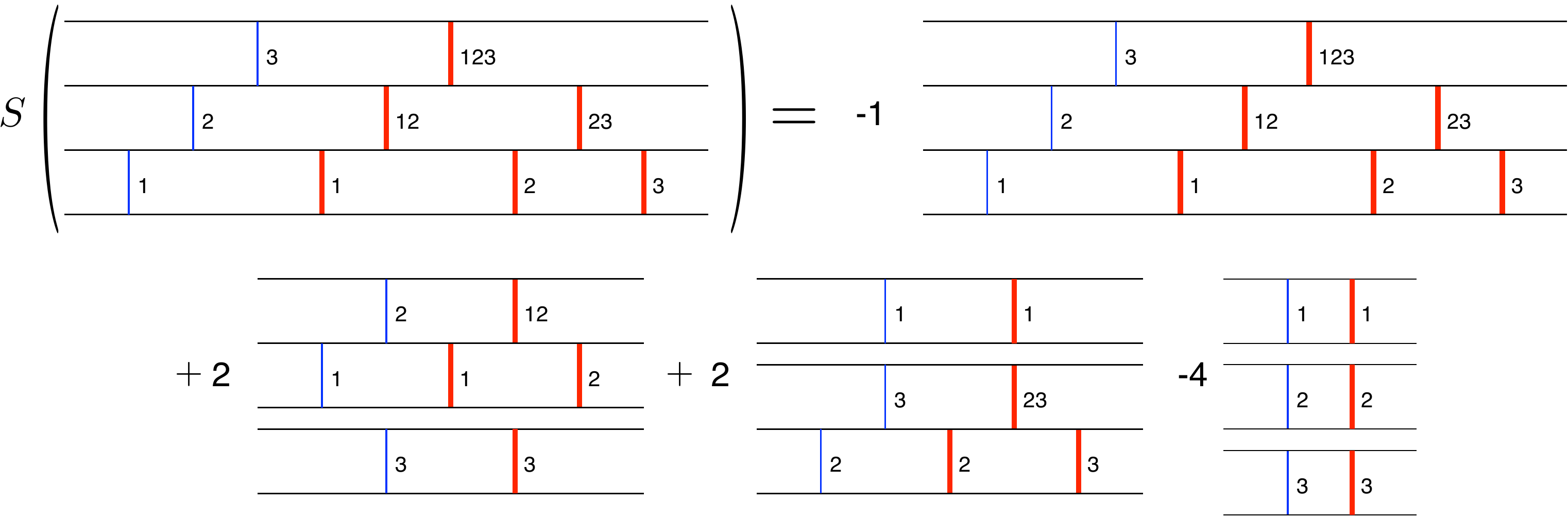}
\caption{Another example of the cancellation free formula for the antipode. Note that the second and third terms in the formula represent the same basis element, and the formula can be further simplified.}
\label{fig:A3k2_antipode}
\end{center}
\end{figure}  
\end{remark}

\begin{remark}
At this point it is an open problem to describe the space of primitive elements, its generators and its Lie structure. The main problem is our poor understanding 
of the equivalent classes. The problem exhibited in Remark~\ref{rem:anti} makes it very difficult to solve the equations
  $$\Delta_{\alpha}(X)=0$$
  for all composition $\alpha$ such that $\ell(\alpha)=2$. 
\end{remark}

\begin{remark}
In Section~\ref{sec:clusters} we introduce some interesting Hopf subalgebra of $\YY$.  But much more can be done in the study of the structure of the Hopf algebra of $\YY$.
In particular what are the characters on $\YY$ that give rise to interesting combinatorial Hopf algebras as in \cite{AguiarBergeronSottile}?
What are the associated even-odd sub-Hopf algebras? Is there new identities one can derive from this as in~\cite{AguiarHsiao}?
\end{remark}

%
%
%
%

\section{Hopf algebra of $c$-clusters of finite type}
\label{sec:clusters}

The Hopf algebra of subword complexes induces interesting sub-Hopf algebras on 

\begin{enumerate}[-]
\item subword complexes of finite type,
\item subword complexes of Cartesian products of type $A$,
\item root independent subword complexes, and
\item $c$-clusters of finite type. 
\end{enumerate}

The first two are clearly sub-Hopf algebras. The root independent subword complexes form an interesting family of examples that were introduced in the study of brick polytopes of spherical subword complexes in~\cite{PilaudStump-brickPolytopes}. They are subword complexes such that all the roots in the root configuration $\Roots{I}$ of a facet $I$ are linearly independent.
Pilaud and Stump~\cite{PilaudStump-brickPolytopes} show that the boundary of the brick polytope of a root independent subword complex is isomorphic to the dual of the subword complex, and use it to recover the polytopal constructions of generalized associahedra of Hohlweg, Lange and Thomas in~\cite{HohlwegLangeThomas}. The root independent subword complexes also have interesting connections with Bott-Samelson varieties and symplectic geometry, which have been used to describe the toric varieties of the associated brick polytopes~\cite{escobar_brick_2014}.   

The subword complex multiplication and comultiplication are clearly closed on the vector space generated by root independent subword complexes, which makes it into a graded sub-Hopf algebra. 
A remarkable subfamily of this family are the subword complexes associated to $c$-cluster complexes studied in~\cite{ceballos_subword_2013}. These complexes encode the combinatorics of the mutation graph of cluster algebras of finite types obtained from acyclic cluster seeds, and have been extensively studied and used in the literature~\cite{Reading-cambrianLattices,ReadingSpeyer,HohlwegLangeThomas,stella_polyhedral_2011,PilaudStump-brickPolytopes}. The Hopf algebra of subword complexes interestingly induces a non-trivial sub-Hopf algebra on $c$-clusters, and the rest of this section is devoted to its study.

\subsection{Hopf algebra of $c$-clusters of finite type}
In~\cite{ceballos_subword_2013}, Ceballos, Labb\'e and Stump showed that the $c$-cluster complexes arising from the the theory of cluster algebras can be obtained as well chosen subword complexes. More precisely, the $c$-cluster complex is the subword complex associated to the word~$Q=c\wo(c)$ and the longest element $\pi=\wo$, where $\wo(c)$ is the first lexicographically subword of $c^\infty$ that is a reduced expression of $\wo$.

\begin{theorem}[{\cite[Theorem 2.2]{ceballos_subword_2013}}]
For any finite Coxeter group, the subword complex $\subwordComplex{c\wo(c)}{\wo}$ is isomorphic to the $c$-cluster complex.
\end{theorem}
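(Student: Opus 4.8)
The plan is to realise the isomorphism concretely, matching the vertices of $\subwordComplex{c\wo(c)}{\wo}$ with the almost positive roots $\Phi_{\ge -1}=\Phi^+\cup(-\Delta)$ and the faces with the $c$-compatible sets, using the root function of Definition~\ref{def:rootFunction} as the dictionary. Writing $n$ for the rank and $N=\ell(\wo)$, the word $Q=c\wo(c)$ has exactly $n+N$ letters, which matches $|\Phi_{\ge -1}|=n+N$; so the first thing I would verify is that every one of these $n+N$ positions is an honest vertex of the complex (none lies in every reduced subword spelling $\wo$), putting the two vertex sets in bijective range of each other.

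To fix the bijection I would read off roots by inversion sequences. The suffix $\wo(c)$ is a reduced word for $\wo$, so its sequence of inversions enumerates $\Phi^+$ without repetition; this assigns its $N$ positions bijectively to the positive roots. The prefix $c$ then accounts for the $n$ negative simple roots $-\Delta$ (this is the sign convention used in~\cite{ceballos_subword_2013}). Concretely this is the assignment of~\cite{ceballos_subword_2013}, and it is facet-independent as a labelling of positions by $\Phi_{\ge -1}$.

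With the labelling in place, the substance is to show that a subset of positions is a face of $\subwordComplex{c\wo(c)}{\wo}$ if and only if the corresponding subset of $\Phi_{\ge -1}$ is $c$-compatible, so that facets (complements of reduced subwords of $c\wo(c)$ spelling $\wo$) correspond to $c$-clusters. I would prove this by induction on $n$ together with a matching of flips. For the flip structure, Lemma~\ref{lem:subword_flips}(1) singles out, for a facet $I$ and $i\in I$, the unique exchange partner $i'$ by the condition $\Root{I}{i'}\in\{\pm\Root{I}{i}\}$, with the sign recording whether $i<i'$; this is exactly the local exchange rule for $c$-clusters, so the flip graph of the subword complex maps to the mutation graph of $c$-clusters. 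Because $\pi=\wo$ forces the complex to be a sphere and both complexes are pure of dimension $n-1$ and flip-connected, agreement of facets together with agreement of flips pins down a simplicial isomorphism induced by the vertex labelling.

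The inductive step is where the Decomposition theorem (Theorem~\ref{thm:docompositon}) does the work: the link of the vertex labelled by a negative simple root is, on one side, a smaller subword complex of the parabolic type $W_F$ produced by the corresponding flat $F$, and on the other side the induced cluster complex of the corresponding parabolic subgroup, matching the well-known recursive structure of $c$-cluster complexes under taking links at $-\Delta$. The hard part will be the base of this reduction: showing that the root labelling really does intertwine the combinatorial ``reduced subword'' condition with Reading's $c$-compatibility relation. This is the crux of~\cite[Theorem~2.2]{ceballos_subword_2013}, and it is where one must import the theory of $c$-sortable elements and the $c$-Cambrian lattice~\cite{Reading-coxeterSortable,Reading-cambrianLattices,ReadingSpeyer}: the reduced subwords of $c\wo(c)$ equal to $\wo$ are precisely the $c$-sortable completions, and their one-element exchanges reproduce cluster mutation. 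Once this dictionary is established, the two simplicial complexes are identified face by face.
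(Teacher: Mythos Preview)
This statement is not proved in the present paper: it is quoted as \cite[Theorem~2.2]{ceballos_subword_2013} and used as an input to the construction of the sub-Hopf algebra~$\C$. There is therefore no proof here against which to compare your proposal.

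Your sketch is nonetheless a reasonable outline of the argument in the cited source. The vertex bijection you describe---positions in the suffix $\wo(c)$ labelled by the inversion sequence of $\wo$, positions in the prefix $c$ labelled by $-\Delta$---is exactly the map of~\cite{ceballos_subword_2013}, and the strategy of matching flips with mutations and inducting through links at negative simple roots has the right shape. One caution: you invoke Theorem~\ref{thm:docompositon} for the link step, which in this paper is presented as a generalisation of~\cite[Lemma~5.4]{ceballos_subword_2013}; this is not circular, since the proof of Theorem~\ref{thm:docompositon} given here is self-contained and does not rely on the cluster interpretation, but it is anachronistic relative to the original argument. Where your sketch remains genuinely incomplete is, as you yourself flag, the identification of the face condition with Reading's $c$-compatibility. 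In~\cite{ceballos_subword_2013} this is not done via ``$c$-sortable completions'' as you suggest, but rather by showing that rotation of the word $c\wo(c)$ (the operation recorded here as Lemma~\ref{lem:rotationCLS}) intertwines with Reading's operator on almost positive roots, so that the defining recursion for $c$-compatibility is matched letter by letter with the recursive structure of the subword complex.
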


We will see below that the Hopf algebra of subword complexes induces a sub-Hopf algebra structure on this family of subword complexes. Let $\C_n$ be the subfamily of $Y_n$ corresponding to subword complexes of the form~$\subwordComplex{c\wo(c)}{\wo}$. 

\begin{theorem}\label{thm:subHopf_cwo}
The graded vector space
\[
\C := \bigoplus_{n\geq 0}  \ k[\C_n]
\]
equipped with the subword complex multiplication and comultiplication is a connected graded sub-Hopf algebra of the Hopf algebra of subword complexes.
\end{theorem}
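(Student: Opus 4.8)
The plan is to show that $\C$ is a graded sub-\emph{bialgebra} of $\YY$; the sub-Hopf-algebra claim then follows automatically, because by Takeuchi's formula~\eqref{TakeuchisFormula} the antipode is assembled entirely out of (iterated) multiplication and comultiplication, so $S$ preserves any sub-bialgebra. Since $\mathbf 1\in\C_0$ and the counit $\epsilon$ restricts verbatim, it suffices to verify that $\C$ is closed under the multiplication $m$ and under the comultiplication $\Delta$.

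Closure under $m$ is the easy direction. Take $(W,c\wo(c),\pi,I)$ and $(W',c'\wo(c'),\pi',I')$ in $\C$, where $\pi,\pi'$ are the longest elements of $W,W'$ and the tails are the respective $c$- and $c'$-sorting words of $\pi,\pi'$. Their product is $(\overline W,\,c\wo(c)\,c'\wo(c'),\,\pi\pi',\,\shifted{II'})$ with $\overline W=W\times W'$. Here $\pi\pi'$ is the longest element of $\overline W$, and $\bar c:=cc'$ is a Coxeter element of $\overline W$ (any interleaving works, as generators of $W$ commute with those of $W'$). I would show $\bar c\,\wo(\bar c)\equiv c\wo(c)\,c'\wo(c')$ up to commutation of commuting letters, where $\wo(\bar c)$ is the $\bar c$-sorting word of $\pi\pi'$. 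Indeed, since every generator of $W$ commutes with every generator of $W'$, the greedy (lexicographically first) selection defining $\wo(\bar c)$ inside $\bar c^\infty$ decouples into the $c$-sorting word of $\pi$ on the $W$-letters and the $c'$-sorting word of $\pi'$ on the $W'$-letters; separating the two families by commutation gives $\wo(\bar c)\equiv\wo(c)\,\wo(c')$, whence $\bar c\,\wo(\bar c)\equiv cc'\,\wo(c)\,\wo(c')\equiv c\wo(c)\,c'\wo(c')$ after moving $c'$ past $\wo(c)$. As $\shifted{II'}$ is a facet of the resulting complex, the product lies in $\C_{n+n'}$.

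Closure under $\Delta$ is the main obstacle. Fix $(W,c\wo(c),\wo,I)\in\C_n$ and a $2$-flat-decomposition $(F_1,F_2)$ of $\Roots{I}$; I must show each factor $(W_{F_i},Q_{F_i},\pi_{F_i},I_{F_i})$ is again a $c_{F_i}$-cluster complex. By the Decomposition Theorem~\ref{thm:docompositon} this factor is the subword complex $\subwordComplex{Q_{F_i}}{\pi_{F_i}}$ of type $W_{F_i}$ with facet $I_{F_i}$. Two things remain. First, $\pi_{F_i}$ is the longest element of $W_{F_i}$: since $\pi=\wo$, the complement of $I$ is a reduced word for $\wo$ whose inversion sequence is exactly $\Phi^+$, listing each positive root once as $\Root{I}{j}$ for $j\notin I$; by the proof of Theorem~\ref{thm:docompositon} the inversion set of $\pi_{F_i}$ consists of those such roots lying in $F_i$, and because $\Phi^+$ is a sublist of $\Root{I}{Q}$ one checks this set equals $\Phi^+\cap V_{F_i}=\Phi_{F_i}^+$ (using Dyer's identification). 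Having inversion set all of $\Phi_{F_i}^+$ forces $\pi_{F_i}=\wo(W_{F_i})$, the longest element. Second, and this is the delicate point, I must show the word $Q_{F_i}$ produced by Theorem~\ref{thm:docompositon} splits as a Coxeter element $c_{F_i}$ of $W_{F_i}$ followed by the $c_{F_i}$-sorting word of $\wo(W_{F_i})$. This is precisely the parabolic-type sorting statement of Lemma~\ref{lem:parabolic_sorting}: restricting the $c$-sorting structure of $\wo$ to the reflection subgroup $W_{F_i}$ attached to a flat again yields a Coxeter-element-plus-sorting-word. The difficulty is that $W_{F_i}$ is generated by reflections that need not be simple in $W$, so the prefix $c$ and the tail $\wo(c)$ interleave nontrivially under restriction to the positions $J_{F_i}$; taming this interleaving is exactly what Lemma~\ref{lem:parabolic_sorting} accomplishes.

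Granting these two points, every factor of $\Delta(W,c\wo(c),\wo,I)$ lies in $\C$, so $\Delta(\C)\subseteq\C\otimes\C$. Combined with closure under $m$, this shows $\C$ is a graded sub-bialgebra of $\YY$, hence a connected graded sub-Hopf algebra, completing the proof. The entire weight of the argument rests on Lemma~\ref{lem:parabolic_sorting}; everything else is bookkeeping on top of the Decomposition Theorem and Takeuchi's formula.
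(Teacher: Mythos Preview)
Your argument for closure under $m$ and for $\pi_{F_i}=\wo(W_{F_i})$ matches the paper and is fine. The gap is in how you invoke Lemma~\ref{lem:parabolic_sorting} to conclude $Q_{F_i}=c_{F_i}\wo(c_{F_i})$.

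Lemma~\ref{lem:parabolic_sorting} is stated and proved only for \emph{standard parabolic} subgroups: it says that the list of inversions of $\wo(c)$, restricted to a standard parabolic root subsystem $\Phi'$, is the inversion list of $\wo(c')$ up to commutations. You yourself note that $W_{F_i}$ is generated by reflections that need not be simple in $W$; this is precisely why Lemma~\ref{lem:parabolic_sorting} does \emph{not} directly apply. There is an additional mismatch: the letters of $Q_{F_i}$ are determined by the auxiliary $\beta$-roots of Lemma~\ref{lem:simple_roots}, not simply by restricting the inversion list of $\wo(c)$ to $V_{F_i}$, and the prefix $c$ of $Q$ contributes positions to $J_{F_i}$ that must be reconciled with this. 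So the claim ``taming this interleaving is exactly what Lemma~\ref{lem:parabolic_sorting} accomplishes'' is not correct as stated.

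The paper closes this gap by an extra layer of reduction that you have omitted (Proposition~\ref{prop:cluster_flat} and Lemmas~\ref{lem:prop_case1}--\ref{lem:prop_case3}). One first treats the special facet $I=\{1,\dots,n\}$, where $W_F$ \emph{is} a standard parabolic and Lemma~\ref{lem:parabolic_sorting} applies verbatim (Lemma~\ref{lem:prop_case1}). Flips then extend this to any facet containing position~$1$ for codimension-$1$ flats (Lemma~\ref{lem:prop_case2}). Arbitrary codimension-$1$ flats are then handled by the rotation $Q\mapsto\rotated{Q}$ together with Lemma~\ref{lem:rotationCLS}, which says rotation preserves the shape $c'\wo(c')$ (Lemma~\ref{lem:prop_case3}). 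Finally one passes to an arbitrary flat by iterating codimension-$1$ restrictions. Without this rotation/flip machinery there is no reason the $\beta$-word $Q_{F_i}$ should split as a Coxeter element followed by its sorting word, and your proposal does not supply a substitute argument.
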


As a consequence, we obtain.  

\begin{corollary}
The subword complex multiplication and comultiplication induce a graded Hopf algebra structure on the vector space generated by $c$-clusters of finite type. 
\end{corollary}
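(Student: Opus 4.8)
The plan is to obtain the corollary as an immediate consequence of Theorem~\ref{thm:subHopf_cwo}, together with the identification of $c$-cluster complexes with subword complexes from~\cite{ceballos_subword_2013}. First I would recall that, for a finite Coxeter group $W$ equipped with a Coxeter element $c$, the subword complex $\subwordComplex{c\wo(c)}{\wo}$ is isomorphic to the $c$-cluster complex; under this isomorphism the Coxeter element $c$ records the acyclic seed $\seed$ and each facet $I$ corresponds to a cluster $\cluster$ obtained from $\seed$ by a sequence of mutations. Consequently the tuple $(W, c\wo(c), \wo, I)$ is precisely a combinatorial encoding of a pair $(\seed, \cluster)$ of $c$-cluster data, and as $c$ and $I$ range over all admissible choices these tuples are exactly the generators of $\C_n$.

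Next I would promote this bijection to a grading-preserving linear isomorphism between $k[\C_n]$ and the vector space freely generated by the $c$-clusters of rank $n$. Transporting the subword complex multiplication $m$ and comultiplication $\Delta$ along this isomorphism then defines the corresponding operations directly on $c$-clusters. Because Theorem~\ref{thm:subHopf_cwo} already establishes that $m$ and $\Delta$ are closed on $\C$ and make it a connected graded sub-Hopf algebra of $\YY$, the transported operations inherit associativity, coassociativity, the compatibility (bialgebra) axiom, and the existence of the antipode with no additional work. Thus the space generated by $c$-clusters acquires a graded Hopf algebra structure, which is exactly the assertion of the corollary.

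The one point that genuinely requires care — and which I expect to be the main obstacle — is checking that the identification with $c$-clusters is \emph{uniform} across the entire family: namely that the bijection correctly transports the seed datum carried by $c$ in addition to the cluster datum carried by the facet $I$, and that the resulting multiplication and comultiplication are the natural cluster-algebra operations described in the introduction (in particular, the nontrivial extension of the cluster operations on $\cluster$ to operations on the acyclic seed $\seed$). Since all of the bialgebra verification is deferred to Theorem~\ref{thm:subHopf_cwo}, the residual content of the corollary is purely the translation of the subword-complex operations into the language of seeds and mutations, rather than any new algebraic check.
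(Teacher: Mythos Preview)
Your proposal is correct and matches the paper's approach: the corollary is stated in the paper as an immediate consequence of Theorem~\ref{thm:subHopf_cwo} together with the identification of $c$-cluster complexes with the subword complexes $\subwordComplex{c\wo(c)}{\wo}$ from~\cite{ceballos_subword_2013}, and no separate proof is given. Your additional remarks about transporting the seed datum and matching the cluster-theoretic operations are handled in the paper only expositorily (in the description of the classical types), not as part of any formal argument for the corollary.
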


Before proving these results we briefly recall the definition of $c$-clusters and describe their Hopf algebra structure in the classical types.

\begin{figure}[tbp]
\includegraphics[width=0.95\textwidth]{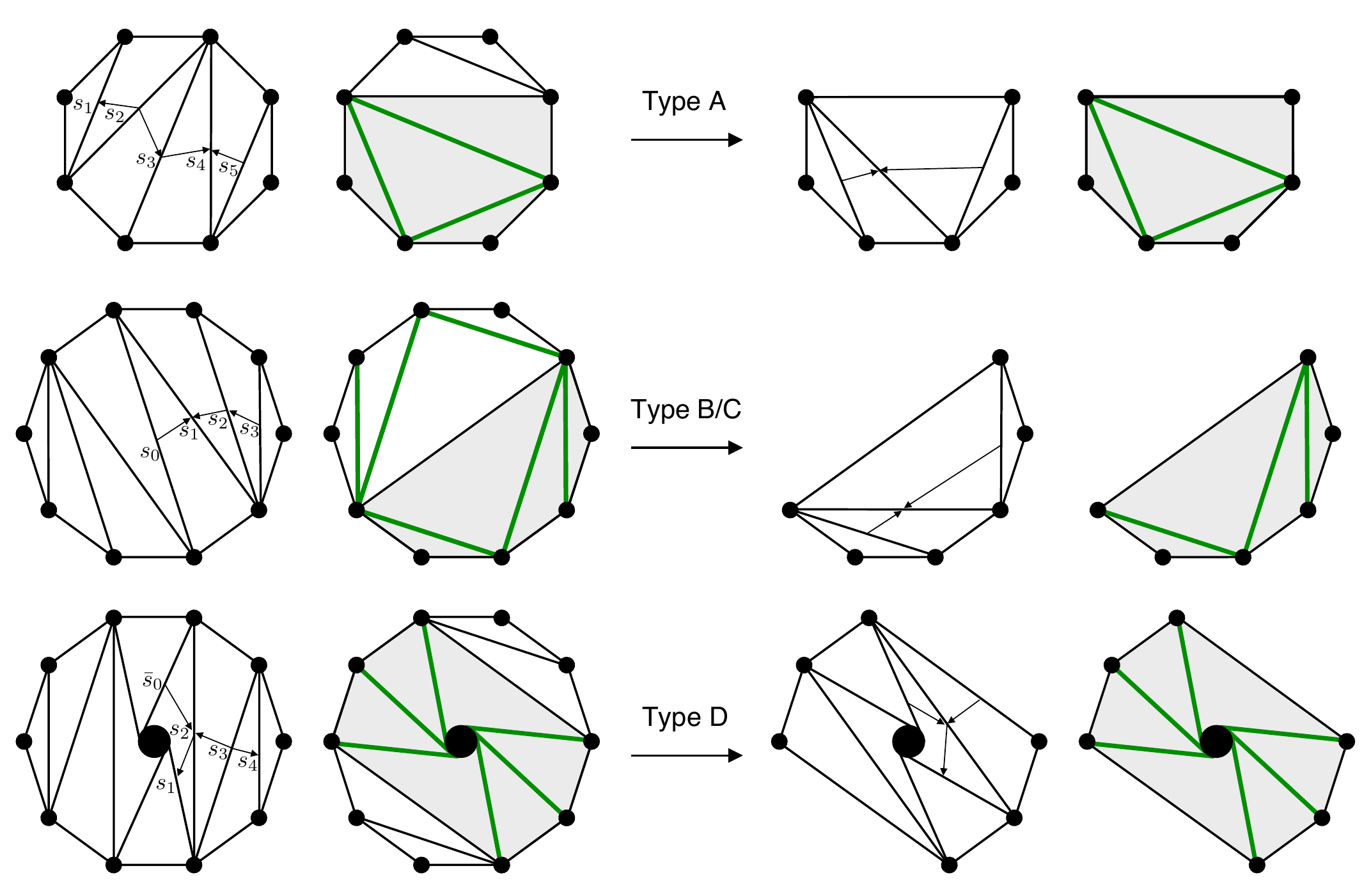}
\caption{Restriction of $c$-clusters in the classical types.}
\label{fig:c-clusters_classical}
\end{figure}

\begin{figure}[htbp]
\includegraphics[width=0.9\textwidth]{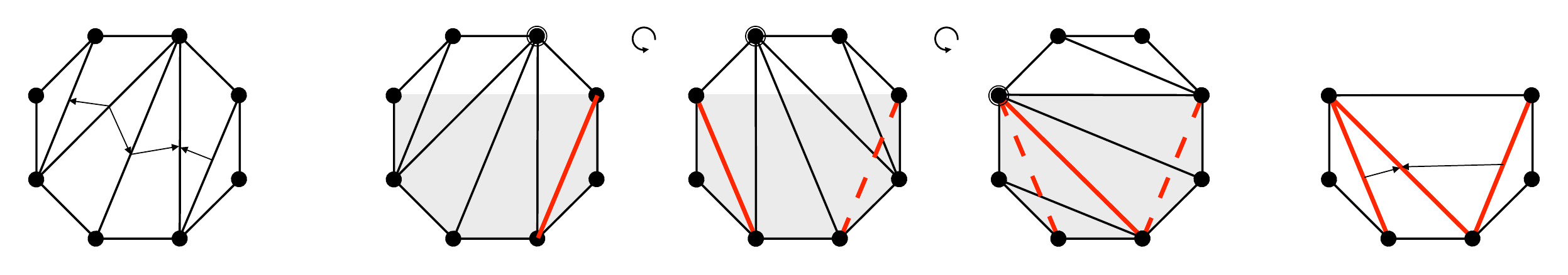}
\caption{Restriction of the acyclic cluster seed $c$ obtained by rotation in type~$A$. The other types are similar.}
\label{fig:c-clusters_classical_rotation}
\end{figure}

\subsection{$c$-clusters}
\label{sec:c-clusters}

Let~$W$ be a (non necessarily irreducible) finite Coxeter group and~$\Phi$ be an associated root system. The~$c$-cluster complex is a simplicial complex on the set of almost positive roots of~$\Phi$, which was introduced by Reading~\cite{Reading-coxeterSortable} following ideas from~\cite{MarshReinekeZelevinsky}. This complex generalizes the cluster complex of Fomin and Zelevinsky~\cite{FominZelevinsky-YSystems}, and has an extra parameter~$c$ corresponding to a Coxeter element. 

Given an acyclic cluster seed~$\seed$, the denominators of the cluster variables with respect to~$\seed$ are in bijection with the set of almost positive roots. The variables in~$\seed$ correspond to the negative roots, and any other variable to the positive root determined by the exponent of its denominator. 
Denote by~$\seed^c$ an acyclic cluster seed corresponding to a Coxeter element~$c$. It associated (weighted) quiver corresponds to the Coxeter graph oriented according to~$c$: a pair~$s,t\in S$ of non-commuting generators has the orientation~$s\rightarrow t$ if and only if~$s$ comes before~$t$ in~$c$.
The~$c$-clusters are the sets of almost positive roots corresponding to clusters obtained by mutations from~$\seed^c$. These were described in purely combinatorial terms using a notion of~$c$-compatibility relation by Reading in~~\cite{Reading-coxeterSortable}, and can be described purely in terms of the combinatorial models in the classical types~\cite[Section~5.4]{ceballos7}~\cite[Section~7]{CeballosPilaud}.

For the purpose of this paper, it is more convenient to consider~$c$-clusters as pairs~$(\seed^c,\cluster)$, where~$\seed^c$ is an acyclic cluster seed corresponding to~$c$ and~$\cluster$ is any cluster obtained from~$\seed^c$ by mutations. Note that this convention is more general than the original one. For example, two pairs related by rotation give the same $c$-cluster when considered as a set of almost positive roots. Figure~\ref{fig:c-clusters_classical} shows examples of $c$-cluster pairs $(\seed^c,\cluster)$ for the classical types. The acyclic (weighted) quiver associated to $\seed^c$ has nodes corresponding to the ``diagonals" of $\seed^c$ and directed arcs connecting clockwise consecutive (internal) sides of the ``triangles". The corresponding Coxeter elements $c$ and expressions $c\wo(c)$ for the acyclic seeds in Figure~\ref{fig:c-clusters_classical} are:
 \[
\begin{array}{rll}
\text{Type }A:  &  c= s_2s_1s_3s_5s_4, &  c\wo(c) = s_2s_1s_3s_5s_4 | s_2s_1s_3s_5s_4 | s_2s_1s_3s_5s_4 | s_2s_1s_3s_5 | s_2  \\
\text{Type }B/C: &  c= s_0s_3s_2s_1, & c\wo(c) =  s_0s_3s_2s_1 | s_0s_3s_2s_1 | s_0s_3s_2s_1 | s_0s_3s_2s_1 | s_0s_3s_2s_1     \\
 \text{Type }D:  &  c = \bar s_0s_3s_2s_1s_4,   & c\wo(c) = \bar s_0s_3s_2s_1s_4 | \bar s_0s_3s_2s_1s_4 | \bar s_0s_3s_2s_1s_4 | \bar s_0s_3s_2s_1s_4 | \bar s_0s_3s_2s_4 | \bar s_0 
\end{array}
\]

 The bijection in~\cite{ceballos_subword_2013} relating positions in $Q_c=c\wo(c)$ to cluster variables and facets of~$\subwordComplex{c\wo(c)}{\wo}$ to $c$-clusters, maps the position of $c_i$ in the prefix $c$ of $Q_c$ to the diagonal of~$\seed^c$ corresponding to~$c_i$. The image of any other position is determined by rotation.  
The rotation in the classical types correspond to rotating the polygon in counterclockwise direction, with the special rule in type $D$ of exchanging the central chords going to the left and to the right of the central disk after rotating~\cite{ceballos_cluster_2015}. The rotation in the word $Q_c$ maps the position of a letter $s$ in $Q_c$ to the position of the next occurrence of $s$ in $Q_c$, if possible, and to the first occurrence of $\wo s \wo$ otherwise. This bijection is illustrated for the example of type~$A$ in Figure~\ref{fig:c-clusters_classical_typeA}. For example, the first appearance of $s_1$ corresponds to the diagonal 13, whose node is labeled by $s_1$. The next three appearances of $s_1$ correspond to the diagonals 24, 35, 46, obtained by rotating 13 one step at a time in counterclockwise direction. Since there is no more $s_1$'s, the next rotation goes to the first appearance of $s_5=\wo s_1 \wo$, which corresponds to the diagonal 57.

\subsection{Hopf structure in the classical types}

Let $(\seed,\cluster)$ be a $c$-cluster pair consisting of an acyclic cluster seed $\seed$ and any cluster $\cluster$ obtained from $\seed$ by mutations. The multiplication of $c$-clusters is given by disjoint union, and the comultiplication is:
\[
\displaystyle
\Delta\left((\seed,\cluster)\right) :=  \sum_{U \subset \cluster} (\seed_U,\cluster_U) \otimes (\seed_{\cluster\smallsetminus U}, \cluster_{\cluster\smallsetminus U})
\]
where $\seed_U$ and $\cluster_U$  denote the restrictions of $\seed$ and $\cluster$ to $U$. The \defn{restriction} $\cluster_U$ of $\cluster$ is simply equal to the restricted cluster $U$. The \defn{restriction} $\seed_U$ of $\seed$ is not as clear since $U$ is not a subset of $\seed$, and turns out to be much more interesting. Denote by $\overline U$ the \defn{closure set} of all cluster variables that can be obtained by mutating elements of $U$ in the cluster $\cluster$ (excluding those in $\cluster\smallsetminus U$). In the examples of classical types in Figure~\ref{fig:c-clusters_classical}, the subset $U$ is represented by the thick diagonals and the closure~$\overline U$ is the set of diagonals that fit in the shaded regions. Here, by diagonals we mean, diagonals in type~$A$, centrally symmetric pairs of diagonals in types $B/C$, and centrally symmetric pairs of chords in type $D$. We refer to~\cite[Section~3.5]{FominZelevinsky-YSystems}\cite[Section~12.4]{FominZelevinsky-ClusterAlgebrasII} for the description of the geometric models for cluster algebras of classical types $A$ and $B/C$, and to~\cite{ceballos_cluster_2015} for type~$D$. 

To obtain $\seed_U$ from $\seed$ we proceed with the following rotation process: 
first take all the elements of $\seed$ that belong to the closure $\overline U$. Then, we consecutively rotate $\seed$ and take all its elements that belong to $\overline U$ and are compatible with all previously taken elements. The process finishes when we have taken as many elements as the cardinality of $U$.  
Figure~\ref{fig:c-clusters_classical} illustrates the restrictions of~$\seed$ and~$\cluster$ for some examples in the classical types. The restriction of the acyclic seed of type $A$ by rotation is explicitly illustrated in Figure~\ref{fig:c-clusters_classical_rotation}.  

\begin{remark}
The multiplication and comultiplication are closed for Cartesian products of type~$A$, but are not for types~$B/C$ and~$D$. For instance, the restriction of a $c$-cluster of type~$B/C$ may be a disjoint union of $c$-clusters of types~$A$ and~$B/C$. 
It is also interesting to see that two different diagonals in $\seed_U$ may come from the same diagonal in $\seed$. This happens in the example of type~$B/C$ in Figure~\ref{fig:c-clusters_classical}: the two diagonals corresponding to the end points of the quiver of $\seed_U$ come from the same centrally symmetric pair of diagonals of length 1 in $\seed$.
\end{remark}

\begin{remark}
Theorem~\ref{thm:subHopf_cwo} (and more specifically Proposition~\ref{prop:cluster_flat} below), guaranties that the rotation process for restriction of acyclic cluster seeds indeed finishes, and moreover, that it produces a cluster that is acyclic (corresponding to a Coxeter element $c_F$). Indeed, rotating $\seed$ to obtain $\seed_U$ is equivalent to take the first positions in $Q_c$ (up to commutation of consecutive commuting letters) whose corresponding root function evaluation belong to a given flat~$F$. The result $\seed_U$ corresponds to $c_F$ which is guarantied to be an acyclic seed by Proposition~\ref{prop:cluster_flat}.
This non-trivial fact is not valid if the cluster seed is not acyclic. For an example take the triangulations of a convex~6-gon given by $\seed=\{13,15,35\}$ and $\cluster=\{13,14,46\}$, and let $U=\{14\}\subset \cluster$. Rotating the seed $\seed$ never produces a diagonal in the closure $\overline U$ and the rotating process never finishes. It would be interesting to investigate to which extent the results in this section generalize for other (non-finite) cluster algebras obtained from acyclic cluster seeds.  
\end{remark} 

We have explicitly computed the restriction of a $c$-cluster of type $A$ in terms of flat decompositions of corresponding subword complex in Figure~\ref{fig:c-clusters_classical_typeA}. Note that the restricted $c$-cluster is exactly the $c$-cluster corresponding to the restricted subword complex. 

\begin{figure}[t]
\includegraphics[width=0.95\textwidth]{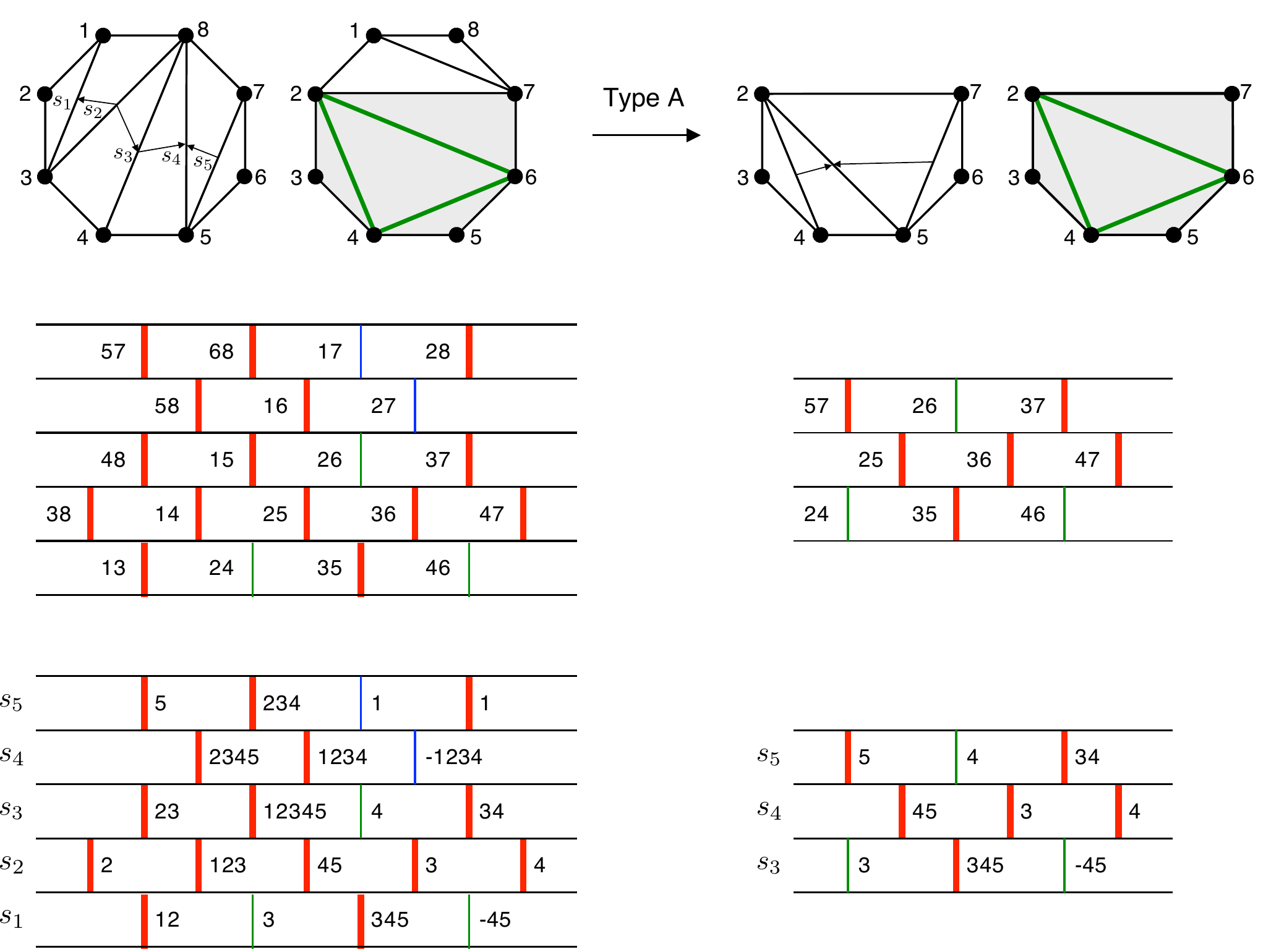}
\caption{Restriction of $c$-clusters via flat decompositions in the subword complex. (top) A $c$-cluster of type $A$ and a restriction of it. (middle) The corresponding subword complexes illustrating the bijection between diagonals of the polygon and letters of the word $c\wo(c)$. (bottom) The root function associated to the $c$-cluster facet and the flat decomposition of the restriction.}
\label{fig:c-clusters_classical_typeA}
\end{figure}

%
%

\subsection{Proof of Theorem~\ref{thm:subHopf_cwo}.} 
We need to show that the subword complex multiplication and comultiplication are closed in $\C$, the vector subspace corresponding to subword complexes of the form~$\subwordComplex{c\wo(c)}{\wo}$. The multiplication is clearly closed: the product of $\wo \in W$ and $\wo'\in W'$ is the longest element~$\overline w_\circ$ in the augmented Coxeter group~$\overline W$, and the concatenation of~$c\wo(c)$ with~$c'\wo(c')$ is equal to $cc'\wo(cc')$ up to commutation of consecutive commuting letters. The rest of this section is dedicated to prove that the comultiplication is closed. This fact follows from the following proposition and the fact that $\pi_F$ is the longest element in $W_F$ when $\pi=\wo$.

\begin{proposition}\label{prop:cluster_flat}
Let~$Q=c\wo(c)$ and $I$ be a facet of $\subwordComplex{Q}{\wo}$. If $F$ is a flat of $\Root{I}{Q}$ such that the span of $F$ is equal to the span of $R(I_F)$, then $Q_F=c_F\wo(c_F)$ up to commutation of consecutive commuting letters for a unique Coxeter element $c_F\in W_F$.  
\end{proposition}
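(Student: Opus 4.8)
The plan is to run everything through the Decomposition theorem (Theorem~\ref{thm:docompositon}) and then reduce the proposition to a statement about $c$-sorting words being preserved under passage to the reflection subgroup $W_F$. First I would invoke Theorem~\ref{thm:docompositon}: since $F$ is a flat of $\Root{I}{Q}$ whose span equals the span of $\Roots{I_F}$, the flat $F$ is the root function of the \emph{irreducible} full-rank subword complex $\subwordComplex{Q_F}{\pi_F}$ of type $W_F$ at the facet $I_F$. By Dyer's theorem~\cite{dyer_reflection_1990}, $W_F$ is genuinely a Coxeter group with simple system $\Delta_F=\{\beta_1,\dots\}\subset\Phi_F^+$, so the expression ``$c_F\wo(c_F)$'' is meaningful inside $W_F$. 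Using the stated fact that $\pi_F$ is the longest element $\wo^{W_F}$ of $W_F$ when $\pi=\wo$, the target reduces to showing that $Q_F$ is a $c_F$-sorting word of $\wo^{W_F}$ prepended by $c_F$, for a suitable Coxeter element $c_F$ of $W_F$.

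Next I would pin down the candidate $c_F$ and dispatch the uniqueness clause. Writing $Q_F=(q'_1,\dots,q'_{r'})$, I let $c_F$ be the product of the simple generators of $W_F$ taken in the order of their first occurrence in $Q_F$; this is visibly the unique candidate, because any factorization $Q_F=c_F\wo(c_F)$ forces the length-$n_F$ prefix (with $n_F=\dim V_F$) to be a reduced word for $c_F$, and a Coxeter element is determined up to commutation by any of its reduced words. Thus uniqueness is automatic once existence of the factorization is established, and the whole problem becomes: \emph{the restriction, along $J_F$, of the word $c\wo(c)$ carries the ``seed plus sorting word'' structure over to $W_F$}.

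The core of the argument is this compatibility statement: scanning $Q=c\wo(c)$ and keeping only the positions in $J_F$ produces exactly the $c_F$-sorting word of $\wo^{W_F}$ preceded by $c_F$. Recall that $\wo(c)$ is characterized greedily as the lexicographically first reduced subword of $c^\infty$ spelling $\wo$; equivalently it is built by scanning $c^\infty$ left to right and retaining a letter precisely when doing so keeps the running product reduced. I would show that this greedy/lexicographic property is inherited by the subsequence indexed by $J_F$. The computation in the proof of Theorem~\ref{thm:docompositon} already shows that $\RootF{I_F}{k}=\Root{I}{j_k}$, so the root function of $Q_F$ at $I_F$ is literally the sublist of $\Root{I}{Q}$ lying in $F$; feeding this through Lemma~\ref{lem:subword_flips} lets me transport the ``first reduced occurrence'' bookkeeping from $Q$ to $Q_F$. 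The cleanest phrasing routes this through a parabolic sorting lemma (Lemma~\ref{lem:parabolic_sorting}): sorting words restrict compatibly to standard parabolic subgroups, and the content here is to upgrade that statement to the reflection subgroup $W_F$ using Dyer's simple system $\Delta_F$.

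The step I expect to be the main obstacle is exactly this last upgrade. Unlike a standard parabolic $W_K$ (whose generators form a subset of $S$), the subgroup $W_F$ is an arbitrary reflection subgroup carved out by the flat, so the classical parabolic sorting recursion does not apply verbatim and there is no a priori Coxeter element induced on $W_F$. The delicate points are (i) verifying that the first $n_F$ retained letters of $Q_F$ are the $n_F$ distinct simple generators of $W_F$, each occurring once, so that $c_F$ is honestly a Coxeter element and appears as a genuine prefix; and (ii) showing that after this prefix the retained subsequence is truly \emph{greedy} for $W_F$, i.e. that a position of $Q$ in $J_F$ is kept in the sorting word of $\wo^{W_F}$ exactly when its companion position is kept in the sorting word of $\wo$. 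Both hinge on the interaction between the $\beta_k$ simple roots of $\Phi_F$ and the inversion sequence of $Q$, which is precisely the content extracted from Dyer's theory together with Lemma~\ref{lem:parabolic_sorting}; this is where I would concentrate the technical effort.
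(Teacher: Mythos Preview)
Your approach diverges from the paper's in a significant way, and the divergence is exactly at the point you flag as the main obstacle.

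The paper does \emph{not} attempt to extend Lemma~\ref{lem:parabolic_sorting} from standard parabolics to arbitrary reflection subgroups $W_F$. Instead it proceeds by a chain of reductions. First it proves the proposition for the initial facet $I=\{1,\dots,n\}$ (Lemma~\ref{lem:prop_case1}): there the root configuration $\Roots{I}$ is exactly $(\alpha_{c_1},\dots,\alpha_{c_n})$, so any irreducible flat $F$ is spanned by a subset of $\Delta$ and $W_F$ is a genuine \emph{standard} parabolic subgroup---Lemma~\ref{lem:parabolic_sorting} applies directly. Next, a codimension-1 flat with $1\in I$ (Lemma~\ref{lem:prop_case2}) is handled by flipping back to the initial facet along positions whose roots lie in $F$; these flips do not change $Q_F$. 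The general codimension-1 case (Lemma~\ref{lem:prop_case3}) is then reduced to the previous one via the \emph{rotation} operation on $Q=c\wo(c)$: rotating $i-1$ times sends position $i$ to position $1$, the rotated word is again of the form $c'\wo(c')$ by Lemma~\ref{lem:rotationCLS}, and Lemma~\ref{lem:rotation_flat} shows $Q_F$ and its rotated counterpart differ only by rotations. Finally, an arbitrary flat is handled by iterating codimension-1 restrictions.

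Your direct route would require showing that the Reading--Speyer characterization via $\omega_c$ (Lemma~\ref{lem:reading2}) passes to $W_F$---equivalently, that the restriction of $\omega_c$ to $V_F$ equals $\omega_{c_F}$ for some Coxeter element $c_F$ of $W_F$. But Lemma~\ref{lem:reading1} only asserts this for standard parabolics, and for a general reflection subgroup there is no a priori Coxeter element to target; you correctly isolate this as the crux but offer no mechanism to resolve it. The paper's insight is that rotation and flips let one transport \emph{any} $(I,F)$ to a situation where $W_F$ is standard parabolic, so the hard structural question about arbitrary reflection subgroups never has to be faced. What your approach would buy, if it could be made to work, is a one-shot conceptual proof; what the paper's approach buys is that every step stays inside the well-understood standard-parabolic setting, at the cost of the rotation machinery of Lemmas~\ref{lem:rotationCLS} and~\ref{lem:rotation_flat}.
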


Three particular cases of the proposition are proved in Lemmas~\ref{lem:prop_case1},~\ref{lem:prop_case2} and~\ref{lem:prop_case3}. The idea of the proof of the general statement is based on a notion of rotation of letters in the word~$Q$ and the understanding of flats after rotation.
Before showing these lemmas we need a characterization of the reduced expressions of $\wo$ that are equal to $\wo(c)$ up to commutation of consecutive commuting letters. This characterization follows from some results by Reading and Speyer~\cite{reading_sortable_2011} which we briefly summarize here.

\subsubsection{$c$-sorting words of sortable elements and the Reading--Speyer bilinear form $\omega_c$.} 


In~\cite{reading_sortable_2011}, Nathan Reading and David Speyer introduce an anti-symmetric bilinear form~$\omega_c$ indexed by a Coxeter element which is used to present a uniform approach to the theory of sorting words and sortable elements. The sortable elements of a Coxeter group play a fundamental role in the study of Cambrian fans. They are counted by the Coxeter Catalan numbers for groups of finite type and have interesting connections to noncrossing partitions and cluster algebras among others.  
An element $w\in W$ is called \emph{$c$-sortable} if the sequence of subsets of $[n]$ determined by the $c$-sorting word of $w$ in the ``blocks" of $c^\infty$ is weakly decreasing under inclusion. In this paper we do not use $c$-sortable elements in general but recall that the longest element $\wo$ is $c$-sortable for any Coxeter element $c$. The omega form $\omega_c(\beta,\beta')$ for a Coxeter element $c=s_1\dots s_n$ is determined by 
\[
\omega_c(\alpha_{s_i},\alpha_{s_j}) =     \left\{ \begin{array}{lcl}
         A(\alpha_{s_i},\alpha_{s_j}) & \mbox{for}
         & i>j, \\ 
	0  & \mbox{for} & i=j, \\
         -A(\alpha_{s_i},\alpha_{s_j}) & \mbox{for}
         & i<j,
                \end{array}\right.
\]
where $A$ is a symmetrizable Cartan matrix for $W$.  

\begin{lemma}[{\cite[Lemma~3.7]{reading_sortable_2011}}]
\label{lem:reading1}
Let $J\subset S$ and let $c'$ be the restriction of $c$ to the parabolic subgroup $W_J$. Then $\omega_c$ restricted to the the subspace $V_J\subset V$ is $\omega_{c'}$.
\end{lemma}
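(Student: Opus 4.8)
The plan is to reduce the claim to a direct comparison of the defining formulas for $\omega_c$ and $\omega_{c'}$ on a basis. Since both $\omega_c$ and $\omega_{c'}$ are bilinear and the subspace $V_J$ is spanned by the simple roots $\{\alpha_s : s\in J\}$, it suffices to verify that $\omega_c(\alpha_{s_i},\alpha_{s_j})=\omega_{c'}(\alpha_{s_i},\alpha_{s_j})$ for every pair $s_i,s_j\in J$; agreement on the spanning set then forces agreement on all of $V_J$ by bilinearity.

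First I would fix conventions. Writing $c=s_1\cdots s_n$, recall that $c'$ is obtained by deleting from this word every letter not lying in $J$ while keeping the surviving letters in their original order. Thus for $s_i,s_j\in J$ the relative order of $s_i$ and $s_j$ is the same in $c$ and in $c'$: the generator $s_i$ precedes $s_j$ in $c'$ if and only if it precedes $s_j$ in $c$. This is the one combinatorial fact the argument rests on, and it is immediate from the definition of the restriction of a Coxeter element to a parabolic subgroup.

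Next I would invoke the standard fact that the root subsystem $\Phi_J=\Phi\cap V_J$ has $\{\alpha_s:s\in J\}$ as its set of simple roots and that a symmetrizable Cartan matrix of $W_J$ is given by the principal submatrix of $A$ indexed by $J$. Consequently the entry $A(\alpha_{s_i},\alpha_{s_j})$ takes the same value whether it is read off the Cartan matrix of $W$ or that of $W_J$, for all $s_i,s_j\in J$.

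Finally I would compare the piecewise definitions directly. For $s_i,s_j\in J$ with $s_i$ appearing after $s_j$ in $c$, the defining rule gives $\omega_c(\alpha_{s_i},\alpha_{s_j})=A(\alpha_{s_i},\alpha_{s_j})$; by the order-preservation observation $s_i$ also appears after $s_j$ in $c'$, so $\omega_{c'}(\alpha_{s_i},\alpha_{s_j})=A(\alpha_{s_i},\alpha_{s_j})$ as well, using that the Cartan entry is unchanged. The case where $s_i$ precedes $s_j$ (both sides equal $-A(\alpha_{s_i},\alpha_{s_j})$) and the case $s_i=s_j$ (both sides equal $0$) are handled identically. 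Hence the two forms agree on the basis, and therefore on $V_J$. I do not expect a genuine obstacle here: the content is entirely definitional, and the only point requiring care is matching the two conventions, namely the restriction of the Coxeter element and the restriction of the Cartan matrix, which is precisely why the statement is phrased as it is.
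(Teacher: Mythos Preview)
Your argument is correct. Note, however, that the paper does not supply its own proof of this lemma: it is quoted verbatim from Reading--Speyer (\cite[Lemma~3.7]{reading_sortable_2011}) and used as a black box. What you have written is precisely the standard direct verification one would give---checking agreement of the two bilinear forms on the simple-root basis of $V_J$ using that (i) the restriction $c'$ preserves the relative order of the generators in $J$ and (ii) the Cartan matrix of $W_J$ is the principal submatrix of $A$ indexed by $J$---so there is nothing to compare beyond observing that your proof fills in what the paper leaves to the citation.
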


Let $a_1\dots a_k$ be a reduced expression for some $w\in W$. The \emph{reflection sequence} associated to $a_1\dots a_k$ is defined as the sequence $t_1,\dots , t_k$, where   $t_i=a_1a_2\dots a_i \dots a_2a_1$. The corresponding list of inversions is $\beta_1, \dots ,\beta_k$, where $\beta_i=a_1\dots a_{i-1} (\alpha_{a_i})=\beta_{t_i}\in \Phi^+$ is the unique positive root orthogonal to the reflection~$t_i$. 
\begin{lemma}[{\cite[Proposition~3.11]{reading_sortable_2011}}]
\label{lem:reading2}
Let $a_1 \dots a_k$ be a reduced word for some $w \in W$ with reflection sequence $t_1,\dots,t_k$. The following are equivalent:
\begin{enumerate}[(i)]
\item $\omega_c (\beta_{t_i},\beta_{t_j})\geq 0$ for all $i \leq j$ with strict inequality holding unless $t_i$ and $t_j$ commute.
\item $w$ is $c$-sortable and $(a_1,\dots, a_k)$ is equal to the $c$-sorting word for $w$ up to commutation of consecutive commuting letters.
\end{enumerate}
\end{lemma}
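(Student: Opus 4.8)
The plan is to prove Proposition~\ref{prop:cluster_flat}: that for $Q=c\wo(c)$ and a flat $F$ of $\Root{I}{Q}$ whose span equals the span of $R(I_F)$, the restricted word $Q_F$ equals $c_F\wo(c_F)$ up to commutation for a unique Coxeter element $c_F\in W_F$. The key is to combine the Decomposition theorem (Theorem~\ref{thm:docompositon}) with the Reading--Speyer characterization in Lemma~\ref{lem:reading2}. By Theorem~\ref{thm:docompositon}, $F$ is the root function of the subword complex $\subwordComplex{Q_F}{\pi_F}$ of type $W_F$ with respect to the facet $I_F$, and the simple roots of $W_F$ are the $\beta_k$ from Lemma~\ref{lem:simple_roots}. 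Since $\pi=\wo$ by hypothesis, the subword of $Q$ in the complement of $I$ is a reduced expression of $\wo$, and the argument in the proof of Theorem~\ref{thm:docompositon} shows that the subword of $Q_F$ in the complement of $I_F$ is a reduced expression of $\pi_F$; this $\pi_F$ is the longest element of $W_F$ (its inversion set consists of exactly the positive roots of $\Phi_F$). So the content reduces to identifying which reduced word for $\wo$ in $W_F$ the word $Q_F$ produces.

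First I would single out a candidate Coxeter element $c_F$ in $W_F$. The idea of rotation of letters in $Q$ suggests the natural choice: $c_F$ is the product of the simple generators of $W_F$ in the order their corresponding positions first appear among the $F$-positions $J_F$ when reading $Q=c\wo(c)$ left to right (this is exactly the prefix of $Q_F$ coming from the first few positions, and ``up to commutation'' lets us choose a genuine Coxeter word). The substance of the proof is then to verify that $Q_F=c_F\wo(c_F)$ up to commutation, equivalently that the $c$-sorting word characterization holds. Here I would invoke Lemma~\ref{lem:reading2}: it suffices to check the omega-positivity condition
\[
\omega_{c_F}(\beta_{t_i},\beta_{t_j})\geq 0 \quad \text{for all } i\leq j,
\]
with strict inequality unless $t_i,t_j$ commute, where $\beta_{t_i}$ are the inversions of the complement-of-$I_F$ subword read along $Q_F$. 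The crucial tool for transferring positivity from the ambient group to $W_F$ is Lemma~\ref{lem:reading1}, which says $\omega_c$ restricted to a parabolic subspace $V_J$ agrees with $\omega_{c'}$ for the restricted Coxeter element. However $W_F$ is in general a \emph{reflection} subgroup, not a standard parabolic, so I cannot apply Lemma~\ref{lem:reading1} directly; the main obstacle is bridging this gap.

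To overcome this I would exploit the explicit formulas in Lemma~\ref{lem:simple_roots} and the proof of Theorem~\ref{thm:docompositon}. The roots $\beta_k$ are obtained from the ambient roots $\Root{I}{j_k}\in F$ by applying a product of reflections $s_{\Root{I}{k}}$ with $\Root{I}{k}\in F$; since $\wo$ is $c$-sortable in the ambient group and its inversions restricted to $F$ inherit the omega-ordering, I would show that the relative order of the $F$-roots inside the global inversion sequence of $\wo(c)$ translates into the required $\omega_{c_F}$-positivity on $\Phi_F$. Concretely, because $\omega_c$ is the antisymmetrization of a symmetric form $A$, and the restriction of $A$ to $V_F$ gives a symmetrizable Cartan matrix for $W_F$, the form $\omega_{c_F}$ on $\Phi_F$ can be computed as $\omega_c$ restricted to $V_F$ once $c_F$ is read off in the correct order; the ordering of reflections in the global $c$-sorting word of $\wo$ forces exactly condition~(i) of Lemma~\ref{lem:reading2} for the induced sequence in $W_F$. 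Applying Lemma~\ref{lem:reading2} then yields that $Q_F$ is the $c_F$-sorting word of $\wo\in W_F$ up to commutation, i.e.\ $Q_F=c_F\wo(c_F)$. Uniqueness of $c_F$ follows since a Coxeter element is determined by the acyclic orientation it induces, which is fixed by the reading order of $J_F$. I expect the three special cases in Lemmas~\ref{lem:prop_case1}--\ref{lem:prop_case3} to handle the delicate configurations (such as when a single generator of the ambient group contributes several generators of $W_F$), so that the general argument above proceeds by reducing to those cases or by a direct omega-form computation.
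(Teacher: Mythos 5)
There is a basic mismatch here: the statement you were asked to prove is Lemma~\ref{lem:reading2} itself, the Reading--Speyer characterization of $c$-sorting words of $c$-sortable elements in terms of the form $\omega_c$. Your proposal never engages with that statement. Instead it sketches a proof of Proposition~\ref{prop:cluster_flat}, a different result appearing later in the paper, and in doing so it \emph{invokes} Lemma~\ref{lem:reading2} as its key tool --- so read as a proof of the lemma, the argument is circular. For comparison, the paper itself offers no proof of this lemma at all: it is quoted directly from \cite{reading_sortable_2011} (Proposition~3.11 there), where the actual argument lives. So there is nothing in the paper your sketch could be matching; what it loosely parallels is the paper's later use of the lemma (via Lemma~\ref{lem:parabolic_sorting} and Lemmas~\ref{lem:prop_case1}--\ref{lem:prop_case3}), not a proof of it.

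A genuine proof of Lemma~\ref{lem:reading2} would have to work from the definitions: the recursive characterization of $c$-sortability (whether $w$ admits a reduced word beginning with the initial letter $s$ of $c$, then passing to $sws$ with Coxeter element $scs$ or to the standard parabolic $W_{\langle s\rangle}$), the behavior of $\omega_c$ under this recursion (skew-symmetry and the transformation rule relating $\omega_{scs}$ applied to $s$-images of roots back to $\omega_c$), and an induction on rank and on $\ell(w)$ showing both that the sorting word's reflection sequence satisfies the positivity condition~(i) and, conversely, that condition~(i) forces the subsets of letters in consecutive blocks of $c^\infty$ to be weakly decreasing. None of this machinery appears in your proposal; the material you do discuss (flats, the roots $\beta_k$, restriction of $\omega_c$ to $V_F$, rotation of letters) is downstream of the lemma, not underneath it. If the intent was to supply a proof where the paper only cites, you would need to reconstruct the Reading--Speyer induction; if the intent was to prove Proposition~\ref{prop:cluster_flat}, then you answered the wrong question, and even there your treatment of the non-parabolic reflection subgroup $W_F$ (the step you flag as ``the main obstacle'') is asserted rather than proved, whereas the paper circumvents it by rotation (Lemma~\ref{lem:rotation_flat}) to reduce to the genuinely parabolic case of Lemma~\ref{lem:prop_case1}.
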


As a consequence of these two lemmas we obtain the following result.

\begin{lemma}\label{lem:parabolic_sorting}
Let $c$ be a Coxeter element and $c'$ be its restriction to a standard parabolic subgroup with root subsystem $\Phi'$. The restriction of the list of inversions of $\wo(c)$ to $\Phi'$ is equal to the list of inversions of a word equal to $\wo(c')$ up to commutation of consecutive commuting letters. 
\end{lemma}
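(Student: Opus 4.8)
The plan is to apply the Reading--Speyer characterization (Lemma~\ref{lem:reading2}) twice --- once in $W$ and once in the standard parabolic $W_J$ --- bridging the two applications by the compatibility of the forms $\omega_c$ and $\omega_{c'}$ provided by Lemma~\ref{lem:reading1}. First I would fix notation: write $\wo(c)=a_1\cdots a_N$ for the $c$-sorting word of the longest element, with reflection sequence $t_1,\dots,t_N$ and inversion list $\beta_1,\dots,\beta_N$, so that the $\beta_i$ enumerate $\Phi^+$ exactly once each. Let $c'$ be the restriction of $c$ to $W_J$, let $\Phi'^+=\Phi^+\cap\Phi'$, and let $\wo^J$ denote the longest element of $W_J$. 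Since $\wo$ is $c$-sortable and $a_1\cdots a_N$ is its $c$-sorting word, the implication (ii)$\Rightarrow$(i) of Lemma~\ref{lem:reading2} gives
\[
\omega_c(\beta_i,\beta_j)\ge 0 \quad\text{for all } i\le j,
\]
with strict inequality unless $t_i$ and $t_j$ commute.

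Next I would restrict the inversion list to $\Phi'$: keeping only the $\beta_i\in\Phi'$ yields a subsequence $\gamma_1,\dots,\gamma_M$, with associated reflections $u_1,\dots,u_M$, which enumerates $\Phi'^+$ exactly once each, so $M=\ell(\wo^J)$. The crux of the argument is to show that this subsequence is itself the inversion list of a \emph{reduced word} $u=b_1\cdots b_M$ for $\wo^J$, and not merely the right multiset of roots in some order. Here I would use the theory of reflection orders: the full list $\beta_1,\dots,\beta_N$ is a reflection order of $\Phi^+$, being the inversion order of a reduced word for $\wo$, and the defining betweenness property of a reflection order is inherited by the restriction to $\Phi'^+$, because any positive root that is a nonnegative combination of two elements of $\Phi'$ again lies in $\Phi'$. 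Thus $\gamma_1,\dots,\gamma_M$ is a reflection order of $\Phi'^+$, and in the finite group $W_J$ every reflection order of $\Phi'^+$ is the inversion list of a reduced word for $\wo^J$; I would invoke Dyer's theory of reflection orders~\cite{dyer_reflection_1990} together with the standard correspondence between reflection orders and reduced words for the longest element~\cite{Bjorner_Brenti}.

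With $u=b_1\cdots b_M$ in hand, the remaining step is a bookkeeping transfer of the inequalities. Each $\gamma_\ell$ lies in $V_J$, so Lemma~\ref{lem:reading1} gives $\omega_{c'}(\gamma_\ell,\gamma_{\ell'})=\omega_c(\gamma_\ell,\gamma_{\ell'})$, and because $\gamma_1,\dots,\gamma_M$ is a subsequence of $\beta_1,\dots,\beta_N$ the inequalities of the first paragraph restrict verbatim, yielding $\omega_{c'}(\gamma_\ell,\gamma_{\ell'})\ge 0$ for all $\ell\le\ell'$, strict unless $u_\ell$ and $u_{\ell'}$ commute (commutation of these reflections being the same condition in $W_J$ as in $W$). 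Applying Lemma~\ref{lem:reading2} in $W_J$ with $c'$, now in the direction (i)$\Rightarrow$(ii), would then show that $u$ equals the $c'$-sorting word of $\wo^J$, namely $\wo(c')$, up to commutation of consecutive commuting letters. Since the inversion list of $u$ is exactly the restriction of the inversion list of $\wo(c)$ to $\Phi'$, this is precisely the assertion of the lemma.

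The main obstacle is the middle step: verifying that restricting the inversion list to $\Phi'$ produces an honest inversion list of a reduced word for $\wo^J$. The subtlety is that the generator $a_i$ attached to an inversion $\beta_i\in\Phi'$ need not itself lie in $J$, so one cannot extract the word by simply taking a subword of $\wo(c)$; the identification must instead pass through the order-theoretic (reflection order / biconvexity) description of inversion lists. Once that is granted, everything else is the routine transfer of the $\omega$-positivity from $W$ to $W_J$ afforded by Lemmas~\ref{lem:reading1} and~\ref{lem:reading2}.
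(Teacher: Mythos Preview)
Your argument is correct and follows the same overall architecture as the paper's: apply Lemma~\ref{lem:reading2} in the direction (ii)$\Rightarrow$(i) to $\wo(c)$, transfer the $\omega$-inequalities to the parabolic via Lemma~\ref{lem:reading1}, and then apply Lemma~\ref{lem:reading2} in the direction (i)$\Rightarrow$(ii) inside $W_J$. The one place where the two proofs diverge is exactly the step you flag as the main obstacle, namely showing that the restricted sequence $\gamma_1,\dots,\gamma_M$ is the inversion list of \emph{some} reduced word for $\wo^J$ (so that Lemma~\ref{lem:reading2} is applicable at all). You handle this through Dyer's reflection orders: the inversion sequence of $\wo(c)$ is a reflection order on $\Phi^+$, the betweenness condition restricts to $\Phi'^+$ because $\Phi'=\Phi\cap V_J$ is closed under positive combinations, and reflection orders on $\Phi'^+$ correspond to reduced words for $\wo^J$. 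The paper instead packages this step as an acyclic directed graph on the full set of reflections (edges recording ``precedes and does not commute''), observes that linear extensions of this graph are exactly the commutation class of $\wo(c)$, and restricts the graph to parabolic reflections. Both devices accomplish the same thing; your reflection-order route is arguably more transparent about why the restricted sequence is a genuine inversion list, and it has the virtue of citing a clean external reference for the step rather than relying on the linear-extension correspondence. Note also that the paper's Proposition~\ref{prop:first_inversion_in_F} would give yet a third way to secure this middle step, though the paper does not invoke it here.
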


\begin{proof}
Lemma~\ref{lem:reading2} characterizes all $c$-sorting words of sortable elements, up to commutation of consecutive commuting letters, in terms of the form $\omega_c$. This lemma, applied to the $c$-sorting word for $\wo$, lets you define an acyclic directed graph on the set of all reflections of the group, with an edge $t \rightarrow t'$ if and only if $t$ precedes $t'$ in the reflection sequence of the sorting word and $t$ and $t'$ do not commute.  
The set of words you can get from $\wo(c)$ by commutation of consecutive commuting letters is in bijection with the set of linear extensions of this directed graph.
By Lemma~\ref{lem:reading1}, the restriction of this graph to reflections in the parabolic subgroup satisfies exactly the same property in Lemma~\ref{lem:reading2}~$(i)$ relative to the parabolic. Therefore, the restricted list of reflections is the list of reflections of a word equal to $\wo(c')$ up to commutation of consecutive commuting letters.
\end{proof}

\subsubsection{Key lemmas and proof of Proposition~\ref{prop:cluster_flat}}

\begin{lemma}\label{lem:prop_case1}
Proposition~\ref{prop:cluster_flat} holds in the case when $I=\{1,\dots n\}$ is the facet of $\subwordComplex{c\wo(c)}{\wo}$ corresponding to the prefix $c$ in $c\wo(c)$.
\end{lemma}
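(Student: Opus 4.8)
The plan is to exploit the very rigid structure of the prefix facet $I=\{1,\dots,n\}$. Since $A_j=[j-1]\ssm I=\emptyset$ for every $j\le n$, the root configuration is $\Roots{I}=(\alpha_{q_1},\dots,\alpha_{q_n})$, i.e.\ the simple roots $\Delta$ listed in the order dictated by $c$, each occurring exactly once. First I would feed in the hypothesis that the span of $F$ equals the span of $\Roots{I_F}$. By Theorem~\ref{thm:docompositon} the roots of $\Roots{I_F}$ are exactly $\Root{I}{i}$ for $i\in\bar I_F\subseteq\{1,\dots,n\}$, hence \emph{simple} roots; therefore $V_F$ is spanned by the subset $\Delta_J:=\{\alpha_s:s\in J\}$ of simple roots, where $J:=\{q_i:i\in\bar I_F\}\subseteq S$. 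Consequently $\Phi_F=\Phi\cap V_F=\Phi_J$ is a \emph{standard parabolic} root subsystem and $W_F=W_J$ with simple generators $J$. This is the one place where the prefix hypothesis does real work, and it is what forces $W_F$ to be parabolic rather than a more exotic reflection subgroup.

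Next I would identify the prefix of $Q_F$. Since $I=\{1,\dots,n\}$, we have $I_F=\{i:j_i\le n\}$, and as $j_1<j_2<\cdots$ this is a prefix $\{1,\dots,p\}$ of $[r']$ with $p=|\bar I_F|=|J|$. For these positions $B_{j_k}=([j_k-1]\ssm I)\ssm J_F=\emptyset$, so by Lemma~\ref{lem:simple_roots} the associated simple root is $\beta_k=\alpha_{q_{j_k}}$ and the $k$-th letter of $Q_F$ is the generator $q_{j_k}\in J$. As $k$ runs over $I_F$ these letters are exactly the generators of $J$, each once, in the order inherited from $c$. Thus the length-$p$ prefix of $Q_F$ is the restriction $c_F:=c|_J$, a Coxeter element of $W_F=W_J$; this also settles the uniqueness of $c_F$, since the prefix determines it.

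It then remains to show the suffix of $Q_F$ equals $\wo(c_F)$ up to commutation. The key observation is that, with respect to the prefix facet $I_F=\{1,\dots,p\}$, the values $\RootF{I_F}{p+\ell}$ at the suffix positions are exactly the inversion sequence of the suffix word (the $\ell$-th value being the product of the first $\ell-1$ suffix letters applied to $\beta_{p+\ell}$). By Theorem~\ref{thm:docompositon} these equal $\Root{I}{j_k}$ with $j_k>n$; and the analogous computation $\Root{I}{n+\ell}=\wordprod{\{n+1,\dots,n+\ell-1\}}(\alpha_{q_{n+\ell}})$ shows that $\Root{I}{n+\ell}$ is the $\ell$-th inversion of the reduced word $\wo(c)$. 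Hence the suffix inversion sequence of $Q_F$ is precisely the subsequence, in order, of the inversions of $\wo(c)$ lying in $\Phi_J$; in particular its inversion set is all of $\Phi_J^+$, so the suffix is a reduced word for $\pi_F=\wo^{W_J}$. I would then invoke Lemma~\ref{lem:parabolic_sorting} with $c'=c|_J=c_F$: this restricted list of inversions is the inversion sequence of a word equal to $\wo(c_F)$ up to commutation. Since a reduced word is recovered from its ordered inversion sequence one letter at a time (the $\ell$-th letter is determined by $\beta_\ell$ together with the product of the preceding letters), the suffix of $Q_F$ coincides with that word, hence equals $\wo(c_F)$ up to commutation. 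Combined with the prefix identification this gives $Q_F=c_F\wo(c_F)$ up to commutation.

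The main obstacle is this last step, matching the suffix with $\wo(c_F)$. The genuinely nontrivial content there is supplied by Lemma~\ref{lem:parabolic_sorting} (and through it the Reading--Speyer form $\omega_c$), so the remaining work is careful bookkeeping: checking that (i) the restriction of the $\wo(c)$-inversion sequence to $\Phi_J$ is \emph{literally} the suffix inversion sequence of $Q_F$ as computed via the Decomposition theorem, and (ii) the phrase ``equal up to commutation of consecutive commuting letters'' is interpreted consistently on both sides, for which I rely on the fact that a reduced word is determined by its ordered inversion sequence. Once these identifications are pinned down, the proposition follows in this case.
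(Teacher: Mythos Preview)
Your proof is correct and follows essentially the same route as the paper's: identify $W_F$ as the standard parabolic $W_J$ (using the hypothesis that $\operatorname{span}F=\operatorname{span}\Roots{I_F}$, which for the prefix facet forces $V_F$ to be spanned by simple roots), read off the prefix of $Q_F$ as $c_F=c|_J$, and then invoke Lemma~\ref{lem:parabolic_sorting} to match the suffix with $\wo(c_F)$. The paper's proof is terser---it states directly that $W_F$ is parabolic and that $Q_F=c'w$ without spelling out the bookkeeping you carry out via Theorem~\ref{thm:docompositon} and the recovery of a reduced word from its ordered inversion sequence---but the argument is the same.
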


\begin{proof}
Let $c=c_1\dots c_n$ and $c_F=c_{i_1}\dots c_{i_\ell}$ be the restriction of $c$ to the positions whose corresponding roots belong to the flat $F$. The subgroup $W_F$ is the parabolic subgroup generated by~$c_{i_1},\dots,c_{i_\ell}$, and the root subsystem $\Phi_F$ is the restriction of $\Phi$ to the span of $\alpha_{c_{i_1}},\dots,\alpha_{c_{i_\ell}}$. The flat $F$ is given by $(\alpha_{c_{i_1}},\dots,\alpha_{c_{i_\ell}})$ and the list of inversions of $\wo(c)$ that belong of $\Phi_F$. 
By Lemma~\ref{lem:parabolic_sorting}, this list of inversions is the list of inversions of a word $w$ equal to $\wo(c')$ up to commutation of consecutive commuting letters. The word $Q_F$ is exactly equal to $c'w$ which is equal to $c'\wo(c')$ up to commutation of consecutive commuting letters.
\end{proof}

\begin{lemma}\label{lem:prop_case2}
Proposition~\ref{prop:cluster_flat} holds in the case when $1\in I$ and $F$ is the codimension 1 flat of~$\Root{I}{Q}$ composed by the root vectors in the span of $R(I\smallsetminus 1)$.
\end{lemma}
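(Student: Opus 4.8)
The plan is to show that the flat $F$ is exactly the coordinate flat of $J:=S\ssm\{c_1\}$, so that $W_F$ is the \emph{standard} parabolic subgroup $W_J$ (with root subsystem $\Phi_J$ and ambient hyperplane $V_J$), and that $c_F$ is the restriction $c_2\cdots c_n$ of $c$ to $J$. First, since $A_1=\emptyset$ we have $\Root{I}{1}=\alpha_{c_1}$, a simple root; as $\subwordComplex{Q}{\wo}$ is root independent, $R(I)$ is a basis of $V$, so $\alpha_{c_1}\notin V_F=\operatorname{span}R(I\ssm 1)$. Hence $\bar I_F=I\ssm 1$ and $I\ssm\bar I_F=\{1\}$, so by Theorem~\ref{thm:docompositon} the complex $\subwordComplex{Q_F}{\pi_F}$ is the link of the vertex~$1$ and $\pi_F$ is the longest element of $W_F$. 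It remains to identify $Q_F$.

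The conceptual crux is to prove $V_F=V_J$. Let $\phi\colon V\to\R$ be the functional returning the coefficient of $\alpha_{c_1}$, so $V_J=\ker\phi$; I must show $\phi(\Root{I}{i})=0$ for every $i\in I\ssm 1$. I would argue this by induction along the facet-adjacency graph of the facets containing position~$1$, which is connected since it is the flip graph of $\operatorname{Link}_{\subwordComplex{Q}{\wo}}(\{1\})$. For the base case take $I_0=\{1,\dots,n\}$, where $\Root{I_0}{i}=\alpha_{c_i}$ and $\phi(\alpha_{c_i})=0$ for $i\neq 1$. For the inductive step, a flip $i\to i'$ inside the link keeps position~$1$ fixed and, by the induction hypothesis, has $\Root{I}{i}\in V_J$, hence $\Root{I}{i}\in\Phi_J$ and $s_{\Root{I}{i}}\in W_J$ stabilizes $V_J$; Lemma~\ref{lem:subword_flips} then shows that every changed root $s_{\Root{I}{i}}\Root{I}{k}$ remains in $V_J$ and that the new vertex carries the root $\pm\Root{I}{i}\in V_J$. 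Thus $R(I\ssm 1)\subseteq V_J$; being $n-1$ independent vectors, they span the hyperplane $V_F$, so $V_F=V_J$. Consequently $W_F=W_J$ is a standard parabolic and, by Lemma~\ref{lem:simple_roots}, the $\beta$-roots are the simple roots $\{\alpha_s:s\in J\}$.

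Next I would pin down the prefix of $Q_F$. For a prefix position $i\in\{2,\dots,n\}$ one has $A_i=[i-1]\ssm I\subseteq\{2,\dots,i-1\}$ because $1\in I$, so $\wordprod{A_i}$ is a product of the generators $c_2,\dots,c_{i-1}\in W_J$; since $\alpha_{c_i}\in V_J$ and $V_J$ is $W_J$-stable, $\Root{I}{i}=\wordprod{A_i}(\alpha_{c_i})\in V_J=V_F$. Hence $\{2,\dots,n\}\subseteq J_F$, and these are the smallest elements of $J_F$, so they furnish the prefix of $Q_F$; moreover $B_i=A_i\ssm J_F=\emptyset$, giving $\beta_i=\alpha_{c_i}$. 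Therefore the first $n-1$ letters of $Q_F$ spell $c_2\cdots c_n=c_F$, a Coxeter element of $W_J$ because $c_1,\dots,c_n$ are distinct.

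Finally, the remaining letters of $Q_F$ must be shown to spell $\wo(c_F)$. The same reflections $s_{\Root{I}{i}}\in W_J$ used above preserve the condition ``$\Root{I}{\cdot}\in V_J$'' under flips, so both $J_F$ and the commutation class of $Q_F$ are independent of which facet through~$1$ we use; I would therefore reduce to $I_0$, where the non-prefix positions of $J_F$ realize the restriction to $\Phi_J$ of the inversion sequence of $\wo(c)$. Invoking Lemma~\ref{lem:parabolic_sorting} (equivalently, applying Lemma~\ref{lem:prop_case1} at $I_0$) identifies this restricted sequence with the inversion sequence of $\wo(c_F)$ up to commutation, so $Q_F=c_F\wo(c_F)$; uniqueness of $c_F$ follows since the length-$(n-1)$ prefix of any such expression recovers the Coxeter element. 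I expect the main obstacle to be precisely this last identification: making the reduction to $I_0$ (the facet-independence of the commutation class of $Q_F$) rigorous enough that Lemma~\ref{lem:parabolic_sorting} applies cleanly, rather than the geometric step $V_F=V_J$, which the flip-invariance argument dispatches fairly directly.
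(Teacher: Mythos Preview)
Your proof is correct and follows the same route as the paper: reduce to the initial facet $I_0=\{1,\dots,n\}$ via flips in the link of vertex~$1$, observe that $Q_F$ is invariant under such flips, and then invoke Lemma~\ref{lem:prop_case1}. The paper simply omits your explicit identification $V_F=V_J$ and your separate computation of the prefix of $Q_F$; once the flip-invariance of the entire word $Q_F$ is established (what you flag as the ``main obstacle''), those intermediate steps become redundant, since everything is read off at $I_0$ at once.
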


\begin{proof}
The facet $I$ can be connected to the facet $I'=\{1,\dots, n\}$ by a sequence of flips involving positions with root vectors in the flat $F$. As in the proof of Theorem~\ref{thm:docompositon}, Lemma~\ref{lem:subword_flips} implies that these flips only depend on the root function, and therefore produce a sequence of flips in the restricted subword complex~$\subwordComplex{Q_F}{\wo}$. Since the word $Q_F$ is preserved under flips, it suffices to show the result for the facet $I'$ and the corresponding flat. This case follows from~Lemma~\ref{lem:prop_case1}.  
\end{proof}

\begin{lemma}\label{lem:prop_case3}
Proposition~\ref{prop:cluster_flat} holds in the case when $F$ is a codimension 1 flat of~$\Root{I}{Q}$.
\end{lemma}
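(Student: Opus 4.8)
The plan is to reduce a general codimension~$1$ flat to the configuration treated in Lemma~\ref{lem:prop_case2} by transporting the distinguished position into the prefix $c$ via rotation. First I would describe codimension~$1$ flats explicitly. Since the $c$-cluster subword complex $\subwordComplex{c\wo(c)}{\wo}$ is root independent, the root configuration $\Roots{I}$ of any facet $I$ is a basis of $V$, and $|I|=n$. If $F$ is a codimension~$1$ flat with $\operatorname{span}(F)=\operatorname{span}(\Roots{I_F})$, then the hyperplane $V_F$ is spanned by $n-1$ of these basis roots, so there is a unique distinguished position $i_0\in I$ with $V_F=\operatorname{span}\big(\Roots{I}\ssm\{\Root{I}{i_0}\}\big)$; consequently $\bar I_F=I\ssm\{i_0\}$ and $I\ssm\bar I_F=\{i_0\}$. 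By Theorem~\ref{thm:docompositon} we then have $\subwordComplex{Q_F}{\pi_F}\cong\operatorname{Link}_{\subwordComplex{Q}{\wo}}(\{i_0\})$, and the restricted word $Q_F$ is constant on this link, i.e.\ it is unchanged by any flip of a position whose root lies in $F$.

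Next I would use rotation to move $i_0$ into the prefix. The rotation sends the position of a letter $s$ to the next occurrence of $s$, or to the first occurrence of $\wo s\wo$ when there is none; hence the orbit of a position carries only the letters $s$ and $\wo s\wo$. As the prefix $c$ contains each generator exactly once, every rotation orbit meets the prefix, so some power $\rho^k$ carries $i_0$ to a position $p\in\{1,\dots,n\}$ and $I$ to a facet $\rho^k(I)\ni p$. The key point --- this is the ``understanding of flats after rotation'' --- is that $\rho$ is an isomorphism $\subwordComplex{c\wo(c)}{\wo}\to\subwordComplex{c'\wo(c')}{\wo}$ onto the rotated Coxeter element $c'$ that carries $F$ to the codimension~$1$ flat of $\rho^k(I)$ with distinguished position $p=\rho^k(i_0)$, and that it intertwines the flat-decomposition of Theorem~\ref{thm:docompositon}; in particular the conclusion ``$Q_F=c_F\wo(c_F)$ up to commutation'' is invariant under $\rho$.

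With $i_0$ replaced by a prefix position $p$, I would finish exactly as in Lemma~\ref{lem:prop_case2}: the rotated facet $\rho^k(I)$ and the prefix facet $\{1,\dots,n\}$ both contain $p$, so they correspond to two facets of the connected link of $\{p\}$, which is isomorphic to $\subwordComplex{Q_F}{\pi_F}$, and are joined by flips of positions with roots in $F$; these flips leave $Q_F$ unchanged, so $Q_F$ equals the restricted word of the prefix facet, which is $c_F\wo(c_F)$ by Lemma~\ref{lem:prop_case1}. Transporting back along $\rho^{-k}$ yields $Q_F=c_F\wo(c_F)$ (up to commutation) for the original flat, and $c_F$ is unique because it is recovered as the prefix of $Q_F$.

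I expect the main obstacle to be the rotation-compatibility claim of the second paragraph: one must check that applying $\rho$ to $(Q,I)$ and then decomposing along the rotated flat gives the same restricted word, of shape $c_F\wo(c_F)$, as decomposing first and then rotating. Concretely this means understanding how the roots $\beta_k=\wordprod{B_{j_k}}(\alpha_{q_{j_k}})$ of Lemma~\ref{lem:simple_roots} --- which determine $Q_F$ --- transform under the position rotation, including the wrap-around passage $s\mapsto\wo s\wo$, and this is exactly where the $\omega_c$-characterization of $\wo(c)$ from Lemmas~\ref{lem:reading1} and~\ref{lem:reading2} must be combined with the rotation. The codimension~$1$ bookkeeping of the first paragraph and the link-connectivity flip argument of the third are routine given the Decomposition theorem and the results of Reading--Speyer.
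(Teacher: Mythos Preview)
Your strategy is essentially the paper's: identify the unique $i_0\in I$ with $V_F=\operatorname{span}\Roots{I\ssm\{i_0\}}$, rotate so that Lemma~\ref{lem:prop_case2} applies, then transport the conclusion back. Two points deserve correction.

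First, your description of rotation is not the one used here: the word rotation $\rotated{Q}=(q_2,\dots,q_r,\woconj{q_1})$ simply shifts each position down by one (and wraps $1$ to $r$), so after exactly $i_0-1$ rotations the distinguished position sits at position~$1$ and Lemma~\ref{lem:prop_case2} applies directly. The ``next occurrence of $s$'' map you describe is the position-to-diagonal bijection of Section~\ref{sec:c-clusters}, not the word rotation; in particular it does not change $Q$, so it cannot by itself produce an isomorphism to a subword complex for a different Coxeter element $c'$. The orbit argument about ``meeting the prefix'' is therefore both incorrect as stated and unnecessary.

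Second, the rotation--flat compatibility you flag as the main obstacle does not require the Reading--Speyer form $\omega_c$: it is a short direct check on the root function (if $1\in I$ the first root changes sign and moves to the end; if $1\notin I$ all roots are hit by $q_1$), which the paper isolates as Lemma~\ref{lem:rotation_flat}. The $\omega_c$ material enters only through Lemma~\ref{lem:prop_case1}, already packaged inside Lemma~\ref{lem:prop_case2}. The paper then finishes by noting that $Q_F$ is connected to $Q'_{F'}=c_{F'}\wo(c_{F'})$ by a sequence of rotations (Lemma~\ref{lem:rotation_flat}(ii)) and that rotations preserve the shape $c\wo(c)$ up to commutation (Lemma~\ref{lem:rotationCLS}).
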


The proof of this lemma uses a rotation of letters operation on subword complexes. The \defn{rotation of a word} $Q=(q_1,q_2 \dots, q_r)$ is the word $\rotated{Q}=(q_2, \dots, q_r, \woconj{q_1} )$, where $\woconj{q} := \wo q \wo$. 
 Using \cite[Proposition~3.9]{ceballos_subword_2013}, we see that $\subwordComplex{Q}{\wo}$ and its rotation $\subwordComplex{\rotated{Q}}{\wo}$ are isomorphic. The isomorphism sends a position $i$ in $Q$ to the \defn{rotated position} $\rotated{i}$ in $\rotated{Q}$, which is by definition equal to $r$ if~$i= 1$ and equal to $i-1$ otherwise. Under this isomorphism, the facet $I$ is mapped to its \defn{rotated facet} $\rotated{I}=\{\rotated{i}: i\in I\}$. This rotation operation behaves very well in the family of subword complexes of the form~$\subwordComplex{c\wo(c)}{\wo}$. 

\begin{lemma}[{\cite[Proposition~4.3]{ceballos_subword_2013}}]
\label{lem:rotationCLS}
If $Q=c\wo(c)$ then the rotated word $\rotated{Q}=c'\wo(c')$ up to commutation of consecutive commuting letters, for a Coxeter element $c'$.
\end{lemma}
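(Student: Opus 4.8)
The plan is to follow the first letter of $Q$ through the rotation and to recognise $\rotated Q$ as the word $c'\wo(c')$ for the \emph{conjugated} Coxeter element $c':=c_1cc_1$, where $c=c_1c_2\cdots c_n$; the recognition is carried out through the Reading--Speyer criterion of Lemma~\ref{lem:reading2}. First I would set up the reduction. Since $\wo$ is the longest element, every simple reflection is a left descent of $\wo$, so the greedy reading of $c^\infty$ defining $\wo(c)$ begins by taking $c_1$; hence $\wo(c)=(a_1,\dots,a_\ell)$ with $a_1=c_1$ and $\ell=\ell(\wo)$. Conjugation by $\wo$ is a diagram automorphism, so $\woconj{c_1}$ is again a simple generator and $\alpha_{\woconj{c_1}}=-\wo(\alpha_{c_1})$. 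Put $c':=c_1cc_1=c_2c_3\cdots c_nc_1$, a Coxeter element. Splitting off the first $n$ letters gives
\[
\rotated Q=(c_2,\dots,c_n,a_1,a_2,\dots,a_\ell,\woconj{c_1})=c'\cdot R,\qquad R:=\rotated{\wo(c)}=(a_2,\dots,a_\ell,\woconj{c_1}),
\]
so the statement reduces to proving $R=\wo(c')$ up to commutation of consecutive commuting letters.

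Next I would prepare Lemma~\ref{lem:reading2} for $R$. A length count shows $R$ is a reduced word for $\wo$: its product is $(c_1\wo)\woconj{c_1}=\wo$, and appending $\woconj{c_1}$ to the reduced word $(a_2,\dots,a_\ell)$ of $c_1\wo$ raises the length from $\ell-1$ to $\ell$. Writing $\beta_1=\alpha_{c_1},\beta_2,\dots,\beta_\ell$ for the inversion sequence of $\wo(c)$, the inversion sequence of $R$ is $(\gamma_1,\dots,\gamma_{\ell-1},\alpha_{c_1})$, where $\gamma_k=a_2\cdots a_k(\alpha_{a_{k+1}})=c_1(\beta_{k+1})$ for $k\le\ell-1$ and the final inversion is $c_1\wo(\alpha_{\woconj{c_1}})=-c_1(\alpha_{c_1})=\alpha_{c_1}$. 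By Lemma~\ref{lem:reading2} it remains to verify, for these inversions, that $\omega_{c'}\ge0$ on ordered pairs, strictly unless the two reflections commute.

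The computational heart is the transformation rule $(\star)$: $\omega_{c'}(c_1x,c_1y)=\omega_c(x,y)$ for all $x,y\in V$. To prove $(\star)$ I would observe that passing from $c$ to $c'=c_1cc_1$ moves $c_1$ from the first to the last position, so $\omega_{c'}=\omega_c$ on pairs of simple roots avoiding $\alpha_{c_1}$ while $\omega_{c'}(\alpha_{c_1},\,\cdot\,)=-\omega_c(\alpha_{c_1},\,\cdot\,)$; expanding $c_1\alpha_{c_p}=\alpha_{c_p}-\langle\alpha_{c_p},\alpha_{c_1}^\vee\rangle\alpha_{c_1}$ and using the symmetry of $A$, the cross terms cancel and $(\star)$ follows by bilinearity. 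Granting $(\star)$, for $i\le j\le\ell-1$ I obtain $\omega_{c'}(\gamma_i,\gamma_j)=\omega_c(\beta_{i+1},\beta_{j+1})\ge0$ from Lemma~\ref{lem:reading2} applied to the $c$-sorting word $\wo(c)$ of the $c$-sortable element $\wo$; strictness transfers since $s_{\gamma_k}=c_1s_{\beta_{k+1}}c_1$ and conjugation preserves commutation. For the pairs $(\gamma_i,\alpha_{c_1})$ I would use that $c_1$ is the \emph{last} letter of $c'$, whence $\omega_{c'}(\alpha_t,\alpha_{c_1})=-A(\alpha_t,\alpha_{c_1})\ge0$ for every simple $\alpha_t$, so $\omega_{c'}(\gamma_i,\alpha_{c_1})=-\sum_t m_t\,A(\alpha_t,\alpha_{c_1})\ge0$ for the positive root $\gamma_i=\sum_t m_t\alpha_t$. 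For strictness here I would invoke connectedness of the support of a root: if $\omega_{c'}(\gamma_i,\alpha_{c_1})=0$ then no neighbour of $c_1$ lies in $\mathrm{supp}(\gamma_i)$, so either $\mathrm{supp}(\gamma_i)=\{c_1\}$ (impossible as $\gamma_i\ne\alpha_{c_1}$) or $c_1\notin\mathrm{supp}(\gamma_i)$, giving $(\gamma_i,\alpha_{c_1})=0$ and hence $s_{\gamma_i}$ commuting with $c_1$. All pairs checked, Lemma~\ref{lem:reading2} yields $R=\wo(c')$ up to commutation, hence $\rotated Q=c'\wo(c')$ up to commutation.

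The main obstacle is the rule $(\star)$ together with the bookkeeping for the single appended inversion $\alpha_{c_1}$: one must pin down precisely how the antisymmetric form $\omega$ changes when the initial letter of $c$ becomes the final letter of $c'$, and then confirm that both the nonnegativity and the ``strict unless commuting'' clause of Lemma~\ref{lem:reading2} survive the reflection $\beta\mapsto c_1\beta$ and the insertion of $\alpha_{c_1}$ at the end --- the latter relying on the connectedness of root supports.
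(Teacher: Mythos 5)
Your proof is essentially correct, but note that the paper itself offers no proof of this lemma: it is imported wholesale as \cite[Proposition~4.3]{ceballos_subword_2013}, so there is no internal argument to match. What you have supplied is a self-contained proof built entirely from the toolkit the paper already quotes, namely the Reading--Speyer criterion of Lemma~\ref{lem:reading2} --- the same device the paper uses to prove Lemma~\ref{lem:parabolic_sorting}. Your reduction is sound: the greedy definition of $\wo(c)$ forces $a_1=c_1$ since $c_1$ is a left descent of $\wo$, giving the factorization $\rotated{Q}=c'R$ with $c'=c_1cc_1$ and $R=\rotated{\wo(c)}$; the length count showing $R$ is reduced for $\wo$, the inversion computation $\gamma_k=c_1(\beta_{k+1})$ and $\gamma_\ell=\alpha_{c_1}$, and the transfer of the strictness clause under conjugation all check out. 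Your transformation rule $(\star)$, i.e.\ $\omega_{c_1cc_1}(c_1x,c_1y)=\omega_c(x,y)$, is precisely Lemma~3.8 of \cite{reading_sortable_2011}, which you have in effect reproved (correctly: the cross terms do cancel by symmetry of the form $A$); citing it would shorten the writeup.

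One slip needs a one-line repair. Your claim that $\omega_{c'}(\alpha_t,\alpha_{c_1})=-A(\alpha_t,\alpha_{c_1})\geq 0$ holds \emph{for every} simple root $\alpha_t$ fails at $t=c_1$, where $\omega_{c'}(\alpha_{c_1},\alpha_{c_1})=0$ while $-A(\alpha_{c_1},\alpha_{c_1})=-2$; consequently the displayed identity $\omega_{c'}(\gamma_i,\alpha_{c_1})=-\sum_t m_t A(\alpha_t,\alpha_{c_1})$ is false whenever $c_1\in\operatorname{supp}(\gamma_i)$. The fix is immediate: the diagonal term vanishes because $\omega_{c'}$ is antisymmetric, so the correct identity is $\omega_{c'}(\gamma_i,\alpha_{c_1})=-\sum_{t\neq c_1} m_t A(\alpha_t,\alpha_{c_1})\geq 0$, and your strictness argument is untouched --- vanishing forces $A(\alpha_t,\alpha_{c_1})=0$ for all $t\neq c_1$ in $\operatorname{supp}(\gamma_i)$, connectedness of the support then yields either $\operatorname{supp}(\gamma_i)=\{c_1\}$, excluded since the inversions of the reduced word $R$ are distinct so $\gamma_i\neq\gamma_\ell=\alpha_{c_1}$, or orthogonality of $\gamma_i$ and $\alpha_{c_1}$, whence the reflections commute. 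With that correction, applying Lemma~\ref{lem:reading2} in the direction (ii)$\Rightarrow$(i) for $\wo(c)$ and (i)$\Rightarrow$(ii) for $R$ legitimately completes the proof.
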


Given a flat $F$ of $\Root{I}{Q}$, one can also define the \defn{rotated flat} $\rotated{F}$ of $\Root{\rotated{I}}{\rotated{Q}}$ by:
\[
F=(\Root{I}{j_1},\dots, \Root{I}{j_{r'}}), \hspace{1cm} 
\rotated{F}= (\Root{\rotated{I}}{\rotated{{j_1}}},\dots, \Root{\rotated{I}}{\rotated{{j_{r'}}}}).
\] 
One can check that~$\rotated{F}$ is indeed a flat by seeing how the root function is transformed under rotation: if $1\in I$, the root at the first position becomes negative and is rotated to the end, while all other roots are preserved. if $1\notin I$, the first root is rotated to the end and all other roots~$\beta$ become $q_1(\beta)$. In the first case, being a flat is clearly preserved after rotating. In the second case, the rotated flat is the list of roots in $\Root{\rotated{I}}{\rotated{Q}}$ that belong to the subspace $q_1(V_F)$, which is clearly a flat. We also observe that the root subsystem $\Phi_{\rotated{F}}$ is exactly equal to $\Phi_F$ in the first case, and isomorphic to it in the second. In both cases we have $W_{\rotated{F}}\cong W_F$. Moreover, the words~$Q_F$ and~$Q_{\rotated{F}}$ are either equal to each other or are connected by a rotation (via this isomorphism). Indeed, if the first root of $Q$ does not belong to the flat $F$ then $Q_{\rotated{F}}= Q_F$ (after applying the isomorphism), otherwise $Q_{\rotated{F}}= \rotated{Q_F}$ (after applying the isomorphism). As a consequence we get the following lemma.

\begin{lemma}\label{lem:rotation_flat}
Let $I$ be a facet of a subword complex $\subwordComplex{Q}{\wo}$ and $F$ be a flat of~$\Root{I}{Q}$. If $Q'$ is a word obtained from $Q$ by a sequence of rotations and $I',F'$ are the corresponding rotated facet and flat, then
\begin{enumerate}[(i)]
\item $W_{F'} \cong W_F$,
\item $Q'_{F'}$ can be obtained from $Q_F$ by a sequence of rotations (via the isomorphism of the underlying Coxeter groups), and
\item $I'_{F'}$ is the corresponding rotation of the facet $I_F$.
\end{enumerate}
\end{lemma}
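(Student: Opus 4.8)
The plan is to reduce everything to the single-rotation case, which is already worked out in the discussion immediately preceding the lemma, and then to induct on the number of rotations in the sequence taking $Q$ to $Q'$.

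For the base case ($Q'=\rotated{Q}$), parts (i) and (ii) are exactly what the preceding paragraph records: $W_{\rotated{F}}\cong W_F$ always, while $Q_{\rotated{F}}=Q_F$ when $\Root{I}{1}\notin F$ and $Q_{\rotated{F}}=\rotated{Q_F}$ when $\Root{I}{1}\in F$. What remains for the base case is part (iii), namely that $I_{\rotated{F}}$ is the rotation of $I_F$ inside $\subwordComplex{Q_F}{\cdot}$, and I would verify this by direct bookkeeping on positions. Writing $J_F=\{j_1<\cdots<j_{r'}\}$, recall $I_F=\{k:j_k\in I\}$ and that rotation of $Q$ sends position $i$ to $\rotated{i}$ (equal to $r$ if $i=1$ and to $i-1$ otherwise), a bijection with $j\in I\iff\rotated{j}\in\rotated{I}$. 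When $\Root{I}{1}\notin F$ we have $1\notin J_F$, every $j_k$ rotates to $j_k-1$ order-preservingly, $Q_{\rotated{F}}=Q_F$, and membership is preserved, so $I_{\rotated{F}}=I_F$ (the trivial rotation). When $\Root{I}{1}\in F$ we have $j_1=1$, which wraps around to the last position $r$ of $Q_{\rotated{F}}=\rotated{Q_F}$; checking the two sub-cases $1\in I$ and $1\notin I$ against the definition of facet rotation $i\mapsto\rotated{i}$ shows exactly that $I_{\rotated{F}}=\rotated{(I_F)}$.

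For the inductive step, write $Q'=\rotated{Q''}$ with $Q''$ obtained from $Q$ by one fewer rotation, and let $I'',F''$ be the corresponding rotated facet and flat. The induction hypothesis applied to $Q''$ gives $W_{F''}\cong W_F$, a word $Q''_{F''}$ obtainable from $Q_F$ by rotations, and $I''_{F''}$ a rotation of $I_F$. The base case applied to the single rotation $Q'=\rotated{Q''}$ with flat $F''$ then gives $W_{F'}=W_{\rotated{F''}}\cong W_{F''}\cong W_F$, a word $Q'_{F'}=Q_{\rotated{F''}}$ equal to either $Q''_{F''}$ or $\rotated{Q''_{F''}}$, and $I'_{F'}=\rotated{(I''_{F''})}$. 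Since a composite of Coxeter-group isomorphisms is an isomorphism, a composite of sequences of rotations is again a sequence of rotations, and a rotation of a rotation of $I_F$ is again a rotation of $I_F$, all three conclusions follow.

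The main obstacle is making the base-case identifications genuinely compatible so that the composition in the inductive step type-checks. Part (ii)'s clause ``via the isomorphism of the underlying Coxeter groups'' requires that the equality $Q_{\rotated{F}}=\rotated{Q_F}$ be read through the fixed isomorphism $W_{\rotated{F}}\cong W_F$, and the two cases behave differently: when $\Root{I}{1}\in I$ one has $\Phi_{\rotated{F}}=\Phi_F$ literally, whereas when $1\notin I$ the subsystems agree only up to the isomorphism induced by $q_1$. Keeping track of which identification is in force at each step, and confirming that the facet rotation in (iii) is taken with respect to the same identification as the word rotation in (ii), is the only delicate point; once the single-rotation case is pinned down cleanly it propagates through the induction with no further difficulty.
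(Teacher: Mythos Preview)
Your proposal is correct and follows exactly the approach the paper intends: the paper does not give a separate proof of this lemma but states it ``as a consequence'' of the preceding paragraph, which handles the single-rotation case for parts (i) and (ii); your contribution is to make the induction explicit and to fill in the position-bookkeeping for part (iii), which the paper omits. The subtlety you flag about tracking which isomorphism $W_{\rotated F}\cong W_F$ is in force (literal equality when $1\in I$, conjugation by $q_1$ when $1\notin I$) is real but routine, and your treatment of it is adequate.
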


Now we are ready to prove Lemma~\ref{lem:prop_case3} and Proposition~\ref{prop:cluster_flat}.
\begin{proof}[Proof of Lemma~\ref{lem:prop_case3}]
Let~$Q=c\wo(c)$, $I$ be a facet of $\subwordComplex{Q}{\wo}$ and $F$ be a codimension 1 flat of~$\Root{I}{Q}$ such that the span of $F$ is equal to the span of $R(I_F)$. We need to show that $Q_F$ is equal to $c_F\wo(c_F)$ up to commutation of consecutive commuting letters for a unique Coxeter element~$c_F\in W_F$. 
Since all the roots in $R(I)$ are linearly independent, $I_F=I\smallsetminus i$ for some $i\in I$, and 
the flat $F$ is composed by the root vectors in the span of $R(I\smallsetminus i)$. Applying $i-1$ rotations we obtain a word $Q'$ where position $i$ is rotated to position 1, a rotated facet $I'$ and a rotated flat~$F'$. By Lemma~\ref{lem:rotationCLS}, $Q'=c'\wo(c')$ up to commutation of consecutive commuting letters for some Coxeter element $c'$. Moreover, $F'$ is the codimension 1 flat of~$\Root{I'}{Q'}$ composed by the root vectors in the span of~$R(I'\smallsetminus 1)$. Lemma~\ref{lem:prop_case2} then implies that $Q'_{F'}=c_{F'}\wo(c_{F'})$ up to commutation of consecutive commuting letters for some Coxeter element $c_{F'}$. Since~$Q_F$ and~$Q'_{F'}$ are connected by rotations via an isomorphism (Lemma~\ref{lem:rotation_flat}~$(ii)$), applying Lemma~\ref{lem:rotationCLS} again guaranties that~$Q_F=c_F\wo(c_F)$ up to commutation of consecutive commuting letters for some Coxeter element $c_F$. This Coxeter element is clearly unique.   
\end{proof}

\begin{proof}[Proof of Proposition~\ref{prop:cluster_flat}]
The restricted subword complex can be consecutively obtained by restricting to codimension 1 flats. The results then follows from Lemma~\ref{lem:prop_case3}.
\end{proof}

%
%
%
%

\appendix
\section{Geometric interpretation of the inversions of a word}\label{Appendix:inversions}

Let $(W,S)$ be a possibly infinite Coxeter system acting on a vector space $V$ generated by simple roots $\Delta$, and let~$\Phi=\Phi^+ \sqcup  \Phi^- \subset \R^n$ be a root system associated to it. 
 For a given (not necessarily reduced) word~$P=(p_1,\dots,p_r)$ in the generators $S$, the \defn{inversions} of~$P$ are the roots $\gamma_1,\dots,\gamma_r$ defined by 
\[
\gamma_i := p_1\dots p_{i-1}(\alpha_{p_i}).
\] 
The list $\inv(P):=(\gamma_1,\dots, \gamma_r)$ is called the \defn{list of inversions} of $P$. Note that if $P$ is reduced $\inv(P)$ consists of $r$ different positive roots, while if $P$ is not reduced $\inv(P)$ may contain negative roots as well as repetitions. 

In this appendix, we present a geometric interpretation of the list of inversions of~$P$ in terms of walks in the geometric presentation of the group. 
In order to keep the intuition from finite reflection groups we distinguish the two cases of finite and infinite Coxeter groups. We refer to~\cite{humphreys_reflection_1992} for a more detailed study of root systems and Coxeter groups.

\subsection{Finite Coxeter groups}
Let $\A$ be the hyperplane arrangement of all reflections induced by~$\Phi$. For each hyperplane~$H\in\A$ there is a unique positive root $\alpha_H\in \Phi^+$ orthogonal to it.
We let~$H^+=\{v\in\R^n : \langle v,\alpha_H\rangle>0 \}$ where $\langle -,-\rangle$ is the canonical scalar product on $\R^n$. Similarly, let $H^-=-H^+$. The triples~$H^-,H,H^+$ decompose $\R^n$ into two half spaces and a subspace on codimension 1. The \defn{Coxeter complex} of $W$ is a cell decomposition of $\R^n$ obtained by considering all possible non-empty intersections $\bigcap_{H\in\A} H^{\epsilon(H)}$ where $\epsilon(H)$ is either $+, -$ or empty.
The \defn{fundamental chamber} is the $n$-dimensional cell we obtain by choosing $\epsilon(H)=+$ for all $H\in A$. The chambers of the complex (the $n$-dimensional cells) are in natural bijection with the elements of $W$. The walls of the chambers (the codimension 1 cells of the complex) can be naturally labeled according to the action of the group on the walls of the fundamental chamber. Figure~\ref{fig:labellingA2} illustrates an example of a labelling for the Coxeter group $W=A_2$. Note that the labelling of the walls is not unique, however, we will provide a precise labelling below, which will be useful for the purposes of this appendix. We refer to~\cite[Section~1.15]{humphreys_reflection_1992} for more details about the Coxeter complex. 

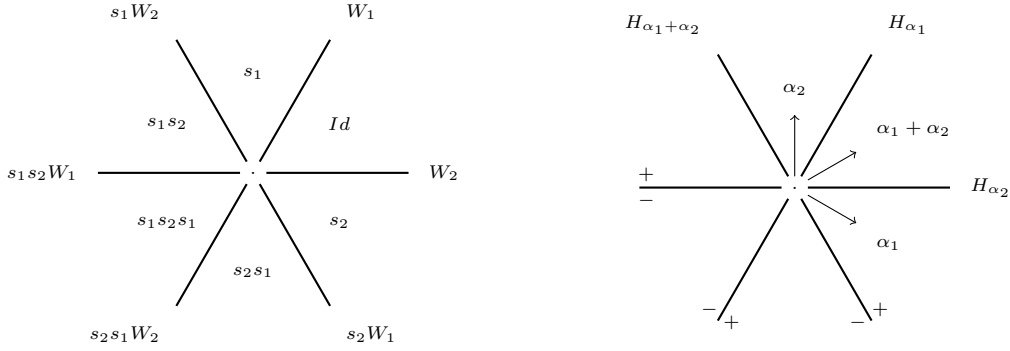
\begin{figure}[htbp]
 \begin{subfigure}{0.42\textwidth} 
 	{\begin{tikzpicture}[scale=.22,font=\tiny]
		\node (0) at (0:0) {.};
		\node (1) at (0:10) {};
		\node (2) at (60:10){};
		\node (3) at (120:10){};
		\node (4) at (180:10) {};
		\node (5) at (240:10) {};
		\node (6) at (300:10) {};
		
		\draw[thick] (0)--(1);
		\draw[thick] (0)--(2);
		\draw[thick] (0)--(3);
		\draw[thick] (0)--(4);
		\draw[thick] (0)--(5);
		\draw[thick] (0)--(6);
		
		\node (7) at (30:6) {$\small Id$};
		\node  (8) at (90:6) {$\small s_1$};
		\node (9) at (150:6) {$\small s_1s_2$};
		\node (10) at (210:6) {$\small s_1s_2s_1$};
		\node (11) at (270:6) {$\small s_2s_1$};
		\node (12) at (330:6) {$\small s_2$};

		\node[right] (13) at (1) {$\small W_{2}$};
		\node[above right] (14) at (2) {$\small W_{1}$};
		\node[above left] (15) at (3) {$\small s_1W_{2}$};
		\node[left] (16) at (4) {$\small s_1s_2W_{1}$};
		\node[below left] (17) at (5) {$\small s_2s_1W_{2}$};
		\node[below right] (18) at (6) {$\small s_2W_{1}$};	
		\end{tikzpicture}}
\end{subfigure}
\quad  
\begin{subfigure}{0.42\textwidth} \centering
 	{\begin{tikzpicture}[scale=.22,font=\tiny]
		\node (0) at (0:0) {.};
		\node (1) at (0:10) {};
		\node (2) at (60:10){};
		\node (3) at (120:10){};
		\node (4) at (180:10) {};
		\node (5) at (240:10) {};
		\node (6) at (300:10) {};
		\node (7) at (330:5) {};
		\node (8) at (90:5) {};
		\node (9) at (30:5) {};
		
		\draw[thick] (0)--(1);
		\draw[thick] (0)--(2);
		\draw[thick] (0)--(3);
		\draw[thick] (0)--(4);
		\draw[thick] (0)--(5);
		\draw[thick] (0)--(6);
		
		\draw[->] (0) to (7);
		\draw[->] (0) to (8);
		\draw[->] (0) to (9);

		\node[below right] at (7) {$\small \alpha_1$};
		\node[above] at (8) {$\small \alpha_2$};
		\node[above right] at (9) {$\small \alpha_1 + \alpha_2$};

		\node[right] at (1) {$\small H_{\alpha_2}$};
		\node[above right] at (2) {$\small H_{\alpha_1}$};
		\node[above left] at (3) {$\small H_{\alpha_1+\alpha_2}$};

		\node at (-185:9) {$+$};
		\node at (-175:9) {$-$};
		\node at (-125:9) {$-$};
		\node at (-115:9) {$+$};
		\node at (-65:9) {$-$};
		\node at (-55:9) {$+$};

	\end{tikzpicture}}
\end{subfigure}
\caption{(Left) Coxeter complex of type $A_2$. The chambers are labeled by elements of $W$, the walls are labeled according to the action of the group on the walls of the fundamental chamber. (Right) The hyperplanes are labeled with their unique orthogonal positive root, the positive and negative sides of each hyperplane are also shown.}
\label{fig:labellingA2}
\end{figure}

A word~$P=(p_1,\dots,p_r)$ in the generators of the group corresponds to a path from the fundamental chamber to the chamber corresponding to the element $p_1\dots p_r\in W$, crossing only through codimension~$\le 1$ cells. The $i$th wall crossed by the path is the wall with label $p_1\dots p_{i-1}W_{p_i}$, which is orthogonal to the inversion $\gamma_i=p_1\dots p_{i-1}(\alpha_{p_i})$. 
The inversion~$\gamma_i$ is a positive (resp. negative) root if the path crosses the $i$th wall from the positive (resp. negative) side of the hyperplane $H_{\gamma_i}$ to the negative (resp. positive).
Figure~\ref{fig:pathA2} illustrates an example for the Coxeter group $W=A_2$.
The description of the sign of $\gamma_i$ follows from ~\cite[Theorem in Section~5.4]{humphreys_reflection_1992}, which affirms that~$\ell(ws)>\ell(w)$ if and only if $w(\alpha_s)>0$, for $w\in W$ and $s\in S$.

\begin{figure}[htbp]
 \begin{subfigure}{0.47\textwidth} 
 	{\begin{tikzpicture}[scale=.22,font=\tiny,pathcolor/.style={color=blue!99!black}]
		\node (0) at (0:0) {.};
		\node (1) at (0:10) {};
		\node (2) at (60:10){};
		\node (3) at (120:10){};
		\node (4) at (180:10) {};
		\node (5) at (240:10) {};
		\node (6) at (300:10) {};
		
		\draw[thick] (0)--(1);
		\draw[thick] (0)--(2);
		\draw[thick] (0)--(3);
		\draw[thick] (0)--(4);
		\draw[thick] (0)--(5);
		\draw[thick] (0)--(6);
		
		\node[right] at (1) {$\small W_{2}=s_1s_2s_1s_2s_1W_{2}$};
		\node[above right] at (2) {$\small W_{1}=s_1s_2s_1s_2s_1s_2W_{1}$};
		\node[above left] at (3) {$\small s_1W_{2}$};
		\node[left] at (4) {$\small s_1s_2W_{1}$};
		\node[below left] at (5) {$\small s_1s_2s_1W_{2}$};
		\node[below right] at (6) {$\small s_1s_2s_1s_2W_{1}$};
		
		\draw [pathcolor,thick,domain=30:510,variable=\t,smooth,samples=75,densely dotted,-<]
        			plot ({\t }: 0.003*\t+4);
			
		\node at (-90:14) {$P=(s_1,s_2,s_1,s_2,s_1,s_2,s_1,s_2)$};	
		
	\end{tikzpicture}}
\end{subfigure}
\quad  
\begin{subfigure}{0.47\textwidth} \centering
 	{\begin{tikzpicture}[scale=.22,font=\tiny,pathcolor/.style={color=blue!99!black}]
		\node (0) at (0:0) {.};
		\node (1) at (0:10) {};
		\node (2) at (60:10){};
		\node (3) at (120:10){};
		\node (4) at (180:10) {};
		\node (5) at (240:10) {};
		\node (6) at (300:10) {};
		
		\draw[thick] (0)--(1);
		\draw[thick] (0)--(2);
		\draw[thick] (0)--(3);
		\draw[thick] (0)--(4);
		\draw[thick] (0)--(5);
		\draw[thick] (0)--(6);
		
		\node[right] (10) at (1) {$\small H_{\alpha_2}$};
		\node[above right] (11) at (2) {$\small H_{\alpha_1}$};
		\node[above left] (12) at (3) {$\small H_{\alpha_1+\alpha_2}$};

		\node at (-185:9) {$+$};
		\node at (-175:9) {$-$};
		\node at (-125:9) {$-$};
		\node at (-115:9) {$+$};
		\node at (-65:9) {$-$};
		\node at (-55:9) {$+$};
		
		\draw [pathcolor,thick,domain=30:510,variable=\t,smooth,samples=75,densely dotted,-<]
        			plot ({\t }: 0.003*\t+4);
			
		\node at (-90:14) {$\inv(P)=(\alpha_1,\alpha_1+\alpha_2,\alpha_2,-\alpha_1,-\alpha_1-\alpha_2,-\alpha_2,\alpha_1,\alpha_1+\alpha_2)$};	

	\end{tikzpicture}}
\end{subfigure}
\caption{Geometric interpretation of the inversions of a word.}
\label{fig:pathA2}
\end{figure}
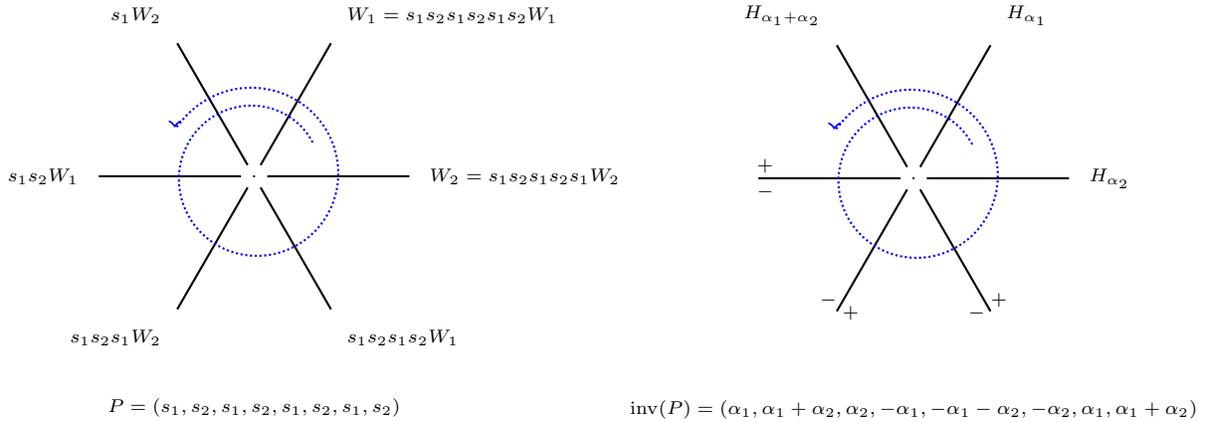

\subsection{Infinite Coxeter groups}
In the infinite case, 
everything works precisely the same way if we replace the Coxeter arrangement by the Tits cone and the orthogonal space to a root by its dual hyperplane. 
The Coxeter group $W$ is considered acting on the dual space $V^*$. 
For $f\in V^*$ and~$\lambda\in V$ we denote by $\langle f,\lambda \rangle$ the image of $\lambda$ under $f$. The action of $w\in W$ on $V^*$ is then characterized by   
\[
\langle w(f), w(\lambda) \rangle  = \langle f, \lambda \rangle. 
\]
The \defn{dual space} of a root $\alpha\in \Phi$ is the hyperplane~$Z_\alpha=\{f\in V^* : \langle f,\alpha \rangle=0 \}$, and the \defn{positive and negative dual half spaces} are~$Z_\alpha^+=\{f\in V^* : \langle f,\alpha \rangle>0 \}$ and~$Z_\alpha^-=\{f\in V^* : \langle f,\alpha \rangle<0 \}$ respectively. In particular, $Z_{w(\alpha)}=wZ_\alpha$. Let $C$ be the intersection of all $Z_{\alpha_s}^+$ for $s\in S$, and~$D:=\overline C$. The \defn{Tits cone} is the union of all $w(D)$ for $w\in W$.  
 
The cone $D$ can be naturally partitioned into subsets 
$C_I:=\left( \bigcap_{s\in I} Z_{\alpha_s} \right) \cap \left( \bigcap_{s\notin I} Z_{\alpha_s}^+ \right)$ for $I\subset S$. 
In particular, $C_\emptyset = C$ and $C_S=\{0\}$. 
This gives a natural cell decomposition of the Tits cone, whose cells can be naturally labeled according to the action of the group on the cells of $D$. The maximal cones correspond to elements of the group, with $C$ corresponding to the identity element.    
We refer to~\cite[Section~5.13]{humphreys_reflection_1992} for more details about the Tits cone.

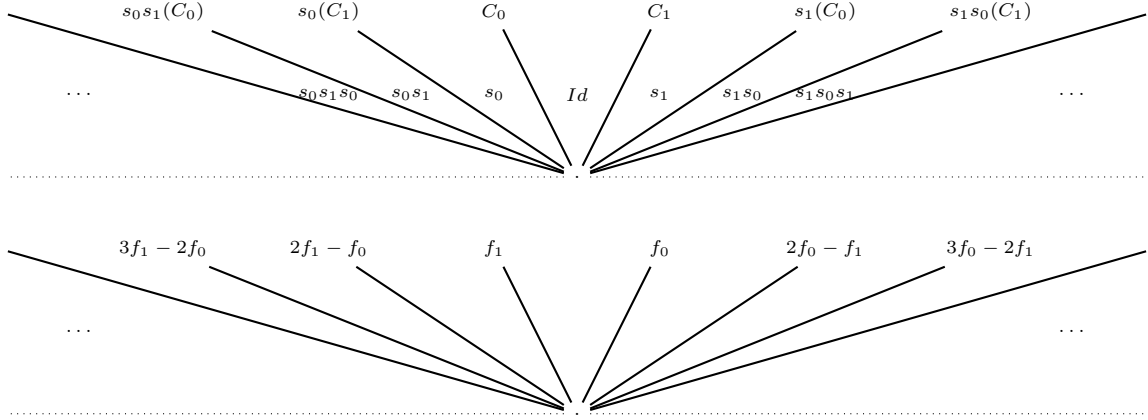
\begin{figure}[h]
\begin{subfigure}{1\textwidth} 
\centering
 	{\begin{tikzpicture}[scale=1.1,font=\tiny]
		\node (0) at (0,0) {.};
		\node (a1) at (1,2) {$C_1$};
		\node (a2) at (3,2) {$s_1(C_0)$};
		\node (a3) at (5,2) {$s_1s_0(C_1)$};
		\node (a4) at (7,2) {};
		
		\node (b1) at (-1,2) {$C_0$};
		\node (b2) at (-3,2){$s_0(C_1)$};
		\node (b3) at (-5,2){$s_0s_1(C_0)$};
		\node (b4) at (-7,2) {};

		\node (c1) at (-7,0) {};
		\node (c2) at (7,0) {};

		\node at (6,1) {$\dots$};		
		\node at (-6,1) {$\dots$};		
		
		\node at (0,1) {$Id$};
		\node at (1,1) {$s_1$};
		\node at (2,1) {$s_1s_0$};
		\node at (3,1) {$s_1s_0s_1$};

		\node at (-1,1) {$s_0$};
		\node at (-2,1) {$s_0s_1$};
		\node at (-3,1) {$s_0s_1s_0$};
		
		\draw[thick] (0)--(a1);
		\draw[thick] (0)--(a2);
		\draw[thick] (0)--(a3);
		\draw[thick] (0)--(a4);

		\draw[thick] (0)--(b1);
		\draw[thick] (0)--(b2);
		\draw[thick] (0)--(b3);
		\draw[thick] (0)--(b4);

		\draw[dotted] (0)--(c1);
		\draw[dotted] (0)--(c2);

		\end{tikzpicture}}
\end{subfigure}
\vspace{0.5cm}  

\begin{subfigure}{1\textwidth} 
\centering
 	{\begin{tikzpicture}[scale=1.1,font=\tiny]
		\node (0) at (0,0) {.};
		\node (a1) at (1,2) {$f_0$};
		\node (a2) at (3,2) {$2f_0-f_1$};
		\node (a3) at (5,2) {$3f_0-2f_1$};
		\node (a4) at (7,2) {};
		
		\node (b1) at (-1,2) {$f_1$};
		\node (b2) at (-3,2){$2f_1-f_0$};
		\node (b3) at (-5,2){$3f_1-2f_0$};
		\node (b4) at (-7,2) {};

		\node (c1) at (-7,0) {};
		\node (c2) at (7,0) {};

		\node at (6,1) {$\dots$};		
		\node at (-6,1) {$\dots$};		
		
		\draw[thick] (0)--(a1);
		\draw[thick] (0)--(a2);
		\draw[thick] (0)--(a3);
		\draw[thick] (0)--(a4);

		\draw[thick] (0)--(b1);
		\draw[thick] (0)--(b2);
		\draw[thick] (0)--(b3);
		\draw[thick] (0)--(b4);

		\draw[dotted] (0)--(c1);
		\draw[dotted] (0)--(c2);
		\end{tikzpicture}}
\end{subfigure}
\caption{(Top) The Tits cone of affine type $\tilde A_1$. The maximal cones are labeled by the elements of the group. The walls are labeled according to the action of the group on the walls of $C$. (Bottom) The rays labeled by linear combinations of the generators $f_0,f_1\in V^*$.}
\label{fig:affineA1_titscone}
\end{figure}

\begin{example}
Let $W=\tilde A_1$ be the affine Coxeter group generated by $s_0$ and $s_1$, with ${m(s_0,s_1)=\infty}$, and $\Phi \subset V$ be the associated root system with simple roots $\alpha_0$ and $\alpha_1$. The action of the group on~$V$ is determined by 
\[
\begin{array}{cccc}
 s_0(\alpha_0)=-\alpha_0, &  s_0(\alpha_1)=2\alpha_0+\alpha_1, &  s_1(\alpha_0)=\alpha_0+2\alpha_1, & s_1(\alpha_1)=-\alpha_1. 
\end{array}
\]
Let $f_0,f_1\in V^*$ defined by $\langle f_i, \alpha_j \rangle=\delta_{ij}$, which is equal to 1 if $i=j$ and to $0$ otherwise. The action of $W$ on $V^*$ is determined by 
\[
\begin{array}{cccc}
 s_0(f_0)=2f_1-f_0, &  s_0(f_1)=f_1, &  s_1(f_0)=f_0, & s_1(f_1)=2f_0-f_1. 
\end{array}
\]
For example $\langle s_0(f_0), \alpha_0 \rangle = \langle f_0, s_0(\alpha_0) \rangle=\langle f_0, -\alpha_0 \rangle=-1$ and 
$\langle s_0(f_0), \alpha_1 \rangle = \langle f_0, s_0(\alpha_1) \rangle=\langle f_0, 2\alpha_0+\alpha_1 \rangle=2$, which implies that $s_0(f_0)= 2f_1-f_0$. The cone $C$ is the set of positive linear combinations of $f_0$ and $f_1$, and the Tits cone of type $\tilde A_1$ is illustrated in Figure~\ref{fig:affineA1_titscone}. 
\end{example} 

Similarly as before, a word~$P=(p_1,\dots,p_r)$ in the generators of the group corresponds to a path in the Tits cone (instead of the Coxeter complex). This path goes from $C$ to the maximal cone corresponding to the element $p_1\dots p_r\in W$, and crosses only through codimension~${\le 1}$ cells. The~$i$th wall crossed by the path is the wall $p_1\dots p_{i-1}C_{p_i}$, which is contained in the dual hyperplane to the inversion~$\gamma_i=p_1\dots p_{i-1}(\alpha_{p_i})$. 
The inversion~$\gamma_i$ is a positive (resp. negative) root if the path crosses the $i$th wall from the positive (resp. negative) side of the hyperplane $Z_{\gamma_i}$ to the negative (resp. positive).
Figure~\ref{fig:affineA1_path} illustrates an example for the affine Coxeter group $W=\tilde A_1$.
The description of the sign of $\gamma_i$ follows from~\cite[Lemma in Section~5.13]{humphreys_reflection_1992}.

\begin{figure}[htbp]
\begin{subfigure}{1\textwidth} 
\centering
 	{\begin{tikzpicture}[scale=1.1,font=\tiny]
		\node (0) at (0,0) {.};
		\node (a1) at (1,2) {$C_1$};
		\node (a2) at (3,2) {$s_1(C_0)$};
		\node (a3) at (5,2) {$s_1s_0(C_1) = s_1s_0s_1(C_1)$};
		\node (a4) at (7,2) {};
		
		\node (b1) at (-1,2) {};
		\node (b2) at (-3,2){};
		\node (b3) at (-5,2){};
		\node (b4) at (-7,2) {};

		\node (c1) at (-7,0) {};
		\node (c2) at (7,0) {};

		\node at (6,1) {$\dots$};		
		\node at (-6,1) {$\dots$};		
		
		\node at (0,-0.5) {$P=(s_1,s_0,s_1,s_1)$};
				
		\draw[thick] (0)--(a1);
		\draw[thick] (0)--(a2);
		\draw[thick] (0)--(a3);
		\draw[thick] (0)--(a4);

		\draw[thick] (0)--(b1);
		\draw[thick] (0)--(b2);
		\draw[thick] (0)--(b3);
		\draw[thick] (0)--(b4);

		\draw[dotted] (0)--(c1);
		\draw[dotted] (0)--(c2);

		\end{tikzpicture}}
\end{subfigure}
\vspace{0.5cm}  

\begin{subfigure}{1\textwidth} 
\centering
 	{\begin{tikzpicture}[scale=1.1,font=\tiny,pathcolor/.style={color=blue!99!black}]
		\node (0) at (0,0) {.};
		\node (a1) at (1,2) {$Z_{\alpha_1}$};
		\node (a2) at (3,2) {$Z_{\alpha_0+2\alpha_1}$};
		\node (a3) at (5,2) {$Z_{2\alpha_0+3\alpha_1}$};
		\node (a4) at (7,2) {};
		
		\node (b1) at (-1,2) {};
		\node (b2) at (-3,2){};
		\node (b3) at (-5,2){};
		\node (b4) at (-7,2) {};

		\node (c1) at (-7,0) {};
		\node (c2) at (7,0) {};

		\node at (6,1) {$\dots$};		
		\node at (-6,1) {$\dots$};		
		
		\node at (0,-0.5) {$\inv(P)=(\alpha_1,\alpha_0+2\alpha_1,2\alpha_0+3\alpha_1,-2\alpha_0-3\alpha_1)$};

		\draw[thick] (0)--(a1);
		\draw[thick] (0)--(a2);
		\draw[thick] (0)--(a3);
		\draw[thick] (0)--(a4);

		\draw[thick] (0)--(b1);
		\draw[thick] (0)--(b2);
		\draw[thick] (0)--(b3);
		\draw[thick] (0)--(b4);

		\draw[dotted] (0)--(c1);
		\draw[dotted] (0)--(c2);
		
		\draw [pathcolor,densely dotted,thick,->] (0,1)  .. controls (2,1.5)  and (5,1)..  (2,1) ;

		\end{tikzpicture}}
\end{subfigure}
\caption{Geometric interpretation of inversions of a word in the Tits cone for infinite Coxeter groups.}
\label{fig:affineA1_path}
\end{figure}
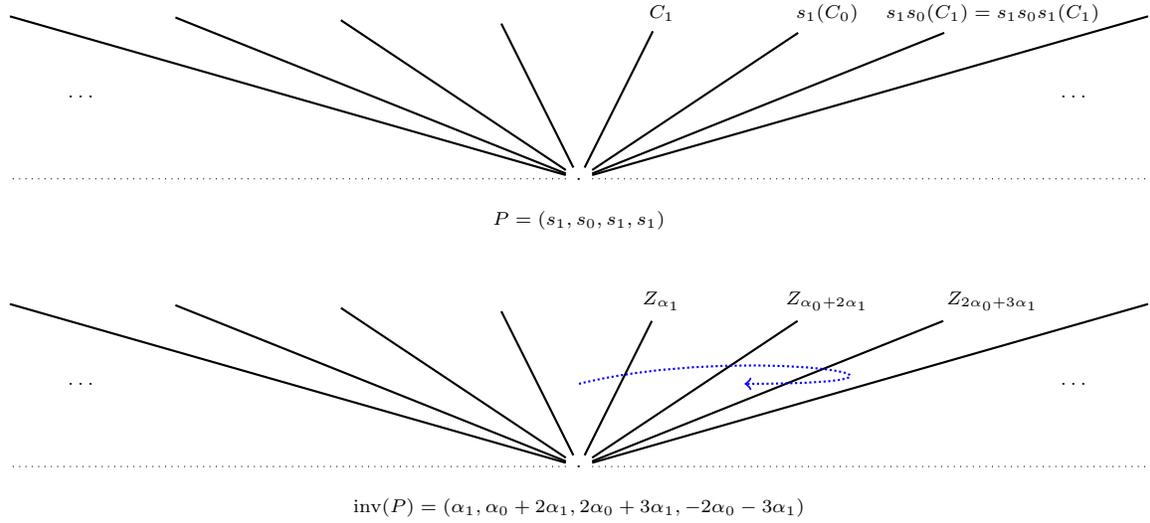

\subsection{Restriction to root subsystems}
The restriction of the list of inversions of a (non-necessarily reduced) word $P$ to a subspace~$V'\subset V$ behaves very well from a Coxeter group and root system perspective. 
The subspace $V'$ has a natural root subsystem
$
\Phi'=\Phi'^+ \sqcup  \Phi'^-
$ 
obtained by restricting~$\Phi,\Phi^+,\Phi^-$ to $V'$. We denote by $\Delta'$ and $W'$ the corresponding simple roots and Coxeter group. Indeed, the intersection of a root system with a subspace is again a root system with simple roots contained in $\Phi^+$~\cite{dyer_reflection_1990}, which is a non-trivial result for infinite Coxeter groups. 
We also consider root subsystems $\Phi'\subset \Phi$ which are not necessarily obtained as the intersection of $\Phi$ with a subspace, as happens for the root system of type $A_1\times A_1$ when viewed as a root subsystem of the root system of type~$B_2$. In this case, we also denote by $\Delta'\subset \Phi'^+$ and $W'$ the corresponding simple roots and Coxeter group.
Our main purpose is to prove the following proposition, which is the main ingredient in the \emph{Decomposition theorem of subword complexes}, Theorem~\ref{thm:docompositon}.

 \begin{proposition}\label{prop:first_inversion_in_F}
 Let $P$ be a (non-necessarily reduced) word in the generators of a (possibly infinite) Coxeter group $W$.
 The restriction of $\inv(P)$ to a root subsystem $\Phi'\subset \Phi$ is the list of inversions $\inv(P')$ of a word $P'$ in the generators of $W'$. In particular, the first root of $\inv(P)$ that belongs to $\Phi'$ is a simple root of $\Phi'$.
 \end{proposition}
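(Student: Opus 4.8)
The plan is to prove Proposition~\ref{prop:first_inversion_in_F} geometrically, by reading the list $\inv(P)$ through the interpretation developed in this appendix: a word $P=(p_1,\dots,p_r)$ is a walk, starting at the fundamental chamber $C$ and crossing one wall at a time, where the $i$th wall crossed is (dual to) the inversion $\gamma_i$ and the sign of $\gamma_i$ records the direction in which that wall is crossed. The idea is to \emph{project} this walk onto the sub-arrangement of hyperplanes coming from $\Phi'$, and to recognize the projected walk as the walk of a word $P'$ in the generators of $W'$ whose inversions are exactly the roots of $\inv(P)$ lying in $\Phi'$.

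First I would fix the geometry. Every $\alpha\in\Phi'$ is a root of $\Phi$, so the hyperplanes orthogonal to such $\alpha$ (finite case) or their dual hyperplanes $Z_\alpha$ (infinite case) form a sub-arrangement of the full arrangement, and this sub-arrangement is precisely the Coxeter complex, resp. Tits cone, of $W'$. Choosing $\Phi'^+=\Phi'\cap\Phi^+$ as positive system, the simple roots $\Delta'$ satisfy $\Delta'\subset\Phi^+$: in the subspace case this is Dyer's result~\cite{dyer_reflection_1990}, and for a general root subsystem it holds because $\Phi'^+$ is cut out by the same generic functional (an interior point of $C$) that defines $\Phi^+$. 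Consequently the fundamental chamber $C$ of $W$ lies inside the fundamental chamber $C'$ of $W'$, since every $\alpha\in\Delta'$ is positive on $C$.

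The key step is the projection itself. As the walk of $P$ proceeds from $C$ to $p_1\cdots p_r(C)$, each step that crosses a wall not belonging to $\Phi'$ keeps us inside the same $W'$-chamber and is discarded, while each step crossing a wall $Z_\alpha$ with $\alpha\in\Phi'$ carries us to an adjacent $W'$-chamber. Retaining only the latter steps produces a walk in the $W'$-arrangement that begins at $C'$ and crosses $\Phi'$-walls one at a time; it is therefore the walk of some word $P'$ in the generators of $W'$. By construction the sequence of roots, together with their signs, crossed by this projected walk is exactly the restriction of $\inv(P)$ to $\Phi'$, so $\inv(P')=\inv(P)\big|_{\Phi'}$, which is the first assertion.

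The ``in particular'' statement then follows at once, since the first inversion of any word is a simple root: the first root of $\inv(P)$ lying in $\Phi'$ is the first inversion of $P'$, hence equals $\alpha_{p'_1}\in\Delta'$. (Directly: before its first $\Phi'$-crossing the walk stays in $C'$, and leaving $C'$ means crossing one of its facet walls, which are exactly the simple walls $Z_\beta$, $\beta\in\Delta'$.) The main obstacle I anticipate is making the projection rigorous in the infinite-type Tits-cone setting and guaranteeing genericity of the walk, namely that it crosses only codimension~$\le 1$ cells and one wall at a time; I would dispose of genericity by perturbing the walk slightly and invoking the sign description of $\gamma_i$ from~\cite[Section~5.13]{humphreys_reflection_1992}, which is unaffected by such a perturbation.
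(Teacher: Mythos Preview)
Your proposal is correct and follows essentially the same approach as the paper: interpret $P$ as a gallery walk in the Coxeter complex (or Tits cone), then project to the $\Phi'$-arrangement to obtain the walk of a word $P'$ in $W'$ whose inversions are precisely the $\Phi'$-entries of $\inv(P)$. The only difference is cosmetic: the paper phrases the projection as orthogonal projection to $V'$ (finite case) and as the quotient map $V^*\to V^*/Z_{\Phi'}$ (infinite case), whereas you phrase it as tracking which $W'$-chamber the walk occupies; your formulation has the minor advantage of applying verbatim when $\Phi'$ is a root subsystem not obtained by intersecting with a subspace.
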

 
 Again, in order to keep the intuition from finite reflection groups, we distinguish the two cases of finite and infinite Coxeter groups. 
 
\begin{proof}[Proof for finite Coxeter groups]
Let~$P=(p_1,\dots,p_r)$ and~$\inv(P)=\{\gamma_1,\dots,\gamma_r\}$ be the corresponding list of inversions, and assume $W$ is finite.
Consider the path corresponding to the word $P$ in the Coxeter complex, and its orthogonal projection to the subspace~$V'$ spanned by $\Phi'$. 
This projection starts at the fundamental chamber defined by $\Phi'$, and crosses the walls $H_{\gamma_i}$ corresponding to the inversions~$\gamma_i \in \Phi'$ in the order they appear in the list of inversions of $P$. Define $P'$ as the word in the generators of $W'$ corresponding to this path. The restriction of $\inv(P)$ to $\Phi'$ is then exactly equal to $\inv(P')$. In particular, since each wall of the fundamental chamber of $\Phi'$ is orthogonal to a simple root in~$\Delta'$, the first inversion in $\inv(P)$ that belongs to~$\Phi'$ is a simple root of $\Phi'$.
\end{proof}

\begin{proof}[Proof for infinite Coxeter groups]
The proof in the infinite case works precisely the same by changing the Coxeter complex to the Tits cone, and the orthogonal projection to the projection to the quotient space $V^*/Z_{\Phi'}$, where $Z_{\Phi'}=\{f\in V^*: \langle f, \alpha \rangle = 0 \text{ for all } \alpha\in \Phi'\}$. The projection of~$Z_\alpha$, for~$\alpha\in \Phi'$ determines the Tits cone of $\Phi'$ in this quotient space, and the projection of the path of~$P$ corresponds to a word $P'$ in the generators of $W'$.
As before, 
the restriction of $\inv(P)$ to $\Phi'$ is equal to $\inv(P')$, and the first inversion in $\inv(P)$ that belongs to~$\Phi'$ is a simple root of $\Phi'$.
\end{proof}

\section{Chipping gems out of rocks}\label{Appendix:gems}
Whenever we have a combinatorial Hopf algebra and a combinatorial basis, we can consider a Markov process on the objects indexing the basis.
We refer the reader to \cite{DiaconisPangRam, Pang2014} for the general details on this idea.
As described in \cite{Pang2014} one needs a basis of the Hopf algebra with elements that are not primitives in dimension $\ne 1$
and finite dimensional invariant subspaces of the operators
  $$m_\alpha \circ \Delta_\alpha. $$
On the combinatorial Hopf algebra of permutations, the operator $m\circ\Delta$ correspond to the riffle-shuffle and the operator $m_{1,n-1}\circ\Delta_{1,n-1}$ correspond
to the top-to-random shuffle. Here we describe informally what the operation $m_{1,n-1}\circ\Delta_{1,n-1}$ is on the Hopf algebra of subword complexes $\YY$.
We remark that an element $(W,Q,\pi,I)$ of degree $n>1$ is never primitive. Moreover given $(W,Q,\pi,I)$ of degree $n$
  \begin{equation}\label{eq:breaking} m_{\alpha}\circ\Delta_{\alpha}(W,Q,\pi,I) = \sum c_{(W',Q',\pi',I')} (W',Q',\pi',I'), \end{equation}
where $(W',Q',\pi',I')$ are of degree $n$, the length of the word $\ell(Q')\le \ell(Q)$ and for any generator $s_{i'},s_{j'}$ of $W'$, we have that $m_{i'j'}=2$ or is equal $m_{ij}$ for some generTOR $S_I,S_J$ OF $W$.

 We thus have that  $m_\alpha \circ \Delta_\alpha\colon k[Y_n^{m,\ell}]\to k[Y_n^{m,\ell}]$ and 
 $k[Y_n^{m,\ell}]$ is finite dimentional.  
The theory of  \cite{Pang2014} can thus be applied of this space to get Markov processes
on the $(W,Q,\pi,I)\in Y_n^{m,\ell}$.  In Equation~\eqref{eq:breaking}, for $m_{1,n-1}\circ\Delta_{1,n-1}$, the group $W'$ must be of the form $W'=A_1\times W''$ and $Q'=(s_1,\ldots,s_1)Q''$. The number of copies of $s_1$ in $Q'$ may vary depending on the term in the expansion.

The Markov process induced by the top-to-random shuffle for  fixed $n,k$ on $\YY$ can be interpreted as follows.
We imagine that the elements  $(W,Q,\pi,I)$ are types of rocks. More precisely, we think that if $W=W_1\times W_2\times\cdots W_k$ and the $W_i$ are indecomposable, then $(W,Q,\pi,I)$
is exactly $k$ rocks of certain types.
The operator $m_{1,n-1}\circ\Delta_{1,n-1}$ can be though off as a small hammer hitting on the rocks.
The expansion in Equation~\eqref{eq:breaking} describes the types of rocks $(W',Q',\pi',I')$ we can get from one small hammer hit on the rocks $(W,Q,\pi,I)$.
The result is always a small chipped rock (of type $A_1$) and what is left (of type $W''$) of the original rocks.
The little chipped rock of type $A_1$ can be of different quality, namely the number of $s_1$'s in the word $Q'$.
We thus call this chipped rock a {\sl gem} and the quality of the gem is proportional to the number of $s_1$ in $Q'$.
Iterating this process would break any rocks into gems. That is, the stable states  of the Markov process are of the form $W=A_1\times A_1\times \cdots A_1$ where we get different  quality of gems.
This is similar to the rock breaking process of \cite{DiaconisPangRam, Pang2014}, but here an initial state of rocks may lead to more than one possible stable outcome.
We then leave to the reader the  pleasure of studying the Markov process of chipping gems out of these kind of rocks.

%
%

\bibliographystyle{amsalpha}
\bibliography{HopfAlgebra_ArXive.bib}

\providecommand{\bysame}{\leavevmode\hbox to3em{\hrulefill}\thinspace}
\providecommand{\MR}{\relax\ifhmode\unskip\space\fi MR }
\providecommand{\MRhref}[2]{%
  \href{http://www.ams.org/mathscinet-getitem?mr=#1}{#2}
}
\providecommand{\href}[2]{#2}
\begin{thebibliography}{MHPS12}

\bibitem[ABS06]{AguiarBergeronSottile}
Marcelo Aguiar, Nantel Bergeron, and Frank Sottile, \emph{Combinatorial hopf
  algebras and generalized dehn-sommerville relations}, Compositio Math.
  \textbf{142} (2006), 1--30.

\bibitem[AH06]{AguiarHsiao}
Marcelo Aguiar and Samuel~K. Hsiao, \emph{Canonical characters on
  quasi-symmetric functions and bivariate {C}atalan numbers}, Electron. J.
  Combin. \textbf{11} (2004/06), no.~2, Research Paper 15, 34 pp. (electronic).

\bibitem[BB05]{Bjorner_Brenti}
Anders Bjorner and Francesco Brenti, \emph{Combinatorics of {C}oxeter
  {G}roups}, Graduate Text in Mathematics, vol. 231, Springer, New York, 2005.

\bibitem[BCL14]{ceballos_fan_2014}
Nantel Bergeron, Cesar Ceballos, and Jean-Philippe Labb\'e, \emph{Fan
  realizations of type $a$ subword complexes and multi-associahedra of rank 3},
  to appear in Discrete and Computational Geometry, \texttt{arXiv:1404.7380},
  2014.

\bibitem[BCP15]{ceballosp1}
Nantel Bergeron, Cesar Ceballos, and Vincent Pilaud, \emph{A {H}opf algebra on
  {RC}-graphs}, In preparation, 2015.

\bibitem[BS14]{BenedettiSagan}
Carolina Benedetti and Bruce Sagan, \emph{Antipodes and involutions}, Preprint,
  \texttt{arXiv:1410.5023}, 2014.

\bibitem[CLS14]{ceballos_subword_2013}
Cesar Ceballos, Jean-Philippe Labb{\'e}, and Christian Stump, \emph{Subword
  complexes, cluster complexes, and generalized multi-associahedra}, J.
  Algebraic Combin. \textbf{39} (2014), no.~1, 17--51.

\bibitem[CP15a]{ceballos_cluster_2015}
Cesar Ceballos and Vincent Pilaud, \emph{Cluster algebras of type {D}:
  pseudotriangulations approach}, Preprint,
  \href{http://arxiv.org/abs/1504.06377}{arXiv:1504.06377}, 2015.

\bibitem[CP15b]{CeballosPilaud}
\bysame, \emph{Denominator vectors and compatibility degrees in cluster
  algebras of finite type}, Transactions of the American Mathematical Society
  \textbf{367} (2015), no.~2, 1421--1439.

\bibitem[CSZ14]{ceballos7}
Cesar Ceballos, Francisco Santos, and G\"unter~M. Ziegler, \emph{Many
  non-equivalent realizations of the associahedron}, Combinatorica (2014),
  Online first publication 29. Sept. 2014,
  \href{http://link.springer.com/article/10.1007/s00493-014-2959-9}{DOI:
  10.1007/s00493-014-2959-9}.

\bibitem[DPR14]{DiaconisPangRam}
Persi Diaconis, C.~Y.~Amy Pang, and Arun Ram, \emph{Hopf algebras and markov
  chains: two examples and a theory}, J. Algebraic Combin. \textbf{39} (2014),
  no.~3, 527--585.

\bibitem[Dye90]{dyer_reflection_1990}
Matthew Dyer, \emph{Reflection subgroups of {Coxeter} systems}, Journal of
  Algebra \textbf{135} (1990), no.~1, 57--73.

\bibitem[Ehr96]{Ehrenborg}
Richard Ehrenborg, \emph{On posets and hopf algebras}, Adv. in Math.
  \textbf{119} (1996), 1--25.

\bibitem[EM15]{escobar_subword_2015}
Laura Escobar and Karola M\'esz\'aros, \emph{Subword complexes via
  triangulations of root polytopes}, Preprint,
  \href{http://arxiv.org/abs/1502.03997}{arXiv:1502.03997}, 2015.

\bibitem[Esc14]{escobar_brick_2014}
Laura Escobar, \emph{Brick manifolds and toric varieties of brick polytopes},
  Preprint, \href{http://arxiv.org/abs/1404.4671}{arXiv:1404.4671}, 2014.

\bibitem[FZ02]{FominZelevinsky-ClusterAlgebrasI}
Sergey Fomin and Andrei Zelevinsky, \emph{Cluster algebras. {I}.
  {F}oundations}, J. Amer. Math. Soc. \textbf{15} (2002), no.~2, 497--529
  (electronic).

\bibitem[FZ03a]{FominZelevinsky-ClusterAlgebrasII}
\bysame, \emph{Cluster algebras. {II}. {F}inite type classification}, Invent.
  Math. \textbf{154} (2003), no.~1, 63--121.

\bibitem[FZ03b]{FominZelevinsky-YSystems}
\bysame, \emph{{$Y$}-systems and generalized associahedra}, Ann. of Math. (2)
  \textbf{158} (2003), no.~3, 977--1018.

\bibitem[HLT11]{HohlwegLangeThomas}
Christophe Hohlweg, Carsten E. M.~C. Lange, and Hugh Thomas, \emph{Permutahedra
  and generalized associahedra}, Adv.~Math. \textbf{226} (2011), no.~1,
  608--640.

\bibitem[HM14]{HumpertMartin}
Brandon Humpert and Jeremy~L. Martin, \emph{The incidence hopf algebra of
  graphs}, \texttt{arXiv:1012.4786} To appear in SIAM J. Discrete Math, 2014.

\bibitem[Hum92]{humphreys_reflection_1992}
James~E. Humphreys, \emph{Reflection groups and {C}oxeter groups}, Cambridge
  Studies in Advanced Mathematics. 29 (Cambridge University Press), 1992.

\bibitem[Jon05]{jonsson_generalized_2005}
Jakob Jonsson, \emph{Generalized triangulations and diagonal-free subsets of
  stack polyominoes}, J. Comb. Theory, Ser.~A \textbf{112} (2005), no.~1,
  117--142.

\bibitem[KM04]{KnutsonMiller-subwordComplex}
Allen Knutson and Ezra Miller, \emph{Subword complexes in {C}oxeter groups},
  Adv.~Math. \textbf{184} (2004), no.~1, 161--176.

\bibitem[KM05]{knutson_grobner_2005}
Allen Knutson and Ezra Miller, \emph{Gr\"obner geometry of {S}chubert
  polynomials}, Ann. Math. \textbf{161} (2005), no.~3, 1245--1318.

\bibitem[LR98]{loday_hopf_1998}
Jean-Louis Loday and Mar\'ia~O. Ronco, \emph{Hopf algebra of the planar binary
  trees}, Advances in Mathematics \textbf{139} (1998), no.~2, 293--309.

\bibitem[MHPS12]{TamariFestschrift}
Folkert M{\"u}ller-Hoissen, Jean~Marcel Pallo, and Jim Stasheff (eds.),
  \emph{Associahedra, {T}amari lattices and related structures. tamari memorial
  festschrift}, Progress in Mathematics, vol. 299, Springer, New York, 2012.

\bibitem[MRZ03]{MarshReinekeZelevinsky}
Robert Marsh, Markus Reineke, and Andrei Zelevinsky, \emph{Generalized
  associahedra via quiver representations}, Trans. Amer. Math. Soc.
  \textbf{355} (2003), no.~10, 4171--4186.

\bibitem[Pan14]{Pang2014}
C.~Y.~Amy Pang, \emph{Hopf algebras and markov chains}, Preprint,
  \texttt{arXiv:1412.8221}, 2014.

\bibitem[PP12]{PilaudPocchiola}
Vincent Pilaud and Michel Pocchiola, \emph{Multitriangulations,
  pseudotriangulations and primitive sorting networks}, Discrete Comput. Geom.
  \textbf{48} (2012), no.~1, 142--191.

\bibitem[PS09]{pilaud_multitriangulations_2009}
Vincent Pilaud and Francisco Santos, \emph{Multitriangulations as complexes of
  star polygons}, Discrete Comput. Geom. \textbf{41} (2009), no.~2, 284--317.

\bibitem[PS11]{PilaudStump-brickPolytopes}
Vincent Pilaud and Christian Stump, \emph{Brick polytopes of spherical subword
  complexes: A new approach to generalized associahedra}, Preprint,
  \texttt{arXiv:1111.3349} (v2), 2011.

\bibitem[PS12]{PilaudSantos-brickPolytope}
Vincent Pilaud and Francisco Santos, \emph{The brick polytope of a sorting
  network}, European~J.~Combin. \textbf{33} (2012), no.~4, 632--662.

\bibitem[Rea06]{Reading-cambrianLattices}
Nathan Reading, \emph{Cambrian lattices}, Adv.~Math. \textbf{205} (2006),
  no.~2, 313--353.

\bibitem[Rea07]{Reading-coxeterSortable}
\bysame, \emph{Clusters, {C}oxeter-sortable elements and noncrossing
  partitions}, Trans. Amer. Math. Soc. \textbf{359} (2007), no.~12, 5931--5958.

\bibitem[RS09]{ReadingSpeyer}
Nathan Reading and David~E. Speyer, \emph{Cambrian fans},
  J.~Eur.~Math.~Soc.~(JEMS) \textbf{11} (2009), no.~2, 407--447.

\bibitem[RS11]{reading_sortable_2011}
Nathan Reading and David~E. Speyer, \emph{{Sortable elements in infinite
  Coxeter groups}}, Trans. Amer. Math. Soc. \textbf{363} (2011), no.~2,
  699--761.

\bibitem[RSS03]{RoteSantosStreinu-polytopePseudotriangulations}
G{\"u}nter Rote, Francisco Santos, and Ileana Streinu, \emph{Expansive motions
  and the polytope of pointed pseudo-triangulations}, Discrete and
  Computational Geometry, The Goodman-Pollack Festschrift (B.~Aronov, S.~Basu,
  J.~Pach, and M.~Sharir, eds.), Algorithms Combin., vol.~25, Springer, Berlin,
  2003, pp.~699--736.

\bibitem[RSS08]{RoteSantosStreinu-survey}
\bysame, \emph{Pseudo-triangulations~---~a survey}, Surveys on discrete and
  computational geometry, Contemp.~Math., vol. 453, Amer. Math. Soc.,
  Providence, RI, 2008, pp.~343--410.

\bibitem[SS12]{serrano_maximal_2012}
Luis Serrano and Christian Stump, \emph{Maximal fillings of moon polyominoes,
  simplicial complexes, and {S}chubert polynomials}, Electron. J. Combin.
  \textbf{19} (2012), no.~1, P16.

\bibitem[Sta63]{Stasheff}
Jim Stasheff, \emph{Homotopy associativity of {H}-spaces {I}, {II}}, Trans.
  Amer. Math. Soc. \textbf{108} (1963), no.~2, 293--312.

\bibitem[Sta97]{Stasheff-operads}
\bysame, \emph{From operads to ``physically'' inspired theories}, Operads:
  Proceedings of Renaissance Conferences (Hartfort, CT/Luminy, 1995)
  (Cambridge, MA), Contemporary Mathematics, vol. 202, American Mathematical
  Society, 1997, pp.~53--81.

\bibitem[Ste12]{stella_polyhedral_2011}
Salvatore Stella, \emph{Polyhedral models for generalized associahedra via
  coxeter elements}, Journal of Algebraic Combinatorics (2012), 1--38.

\bibitem[Stu11]{Stump}
Christian Stump, \emph{A new perspective on $k$-triangulations}, J. Combin.
  Theory Ser. A \textbf{118} (2011), no.~6, 1794--1800.

\bibitem[STW15]{stump_cataland_2015}
Christian Stump, Hugh Thomas, and Nathan Williams, \emph{Cataland: {Why} the
  {Fuss}?}, Preprint, \href{http://arxiv.org/abs/1503.00710}{arXiv:1503.00710},
  2015.

\bibitem[SW09]{soll_type-b_2009}
Daniel Soll and Volkmar Welker, \emph{{Type-B generalized triangulations and
  determinantal ideals}}, Discrete Math. \textbf{309} (2009), no.~9,
  2782--2797.

\bibitem[Tak71]{Takeuchi}
M.~Takeuchi, \emph{Free hopf algebras generated by coalgebras.}, J. Math. Soc.
  Japan \textbf{23} (1971), 561--282.

\end{thebibliography}

\end{document}